\title{Farey Sequences for Thin Groups}
\author{
{\sc Christopher Lutsko$^*$}
\\[8pt]
$^*$University of Bristol, UK
} 
\definecolor{dblue}{rgb}{0.09,0.32,0.44} %22-84-113
\newcommand{\BMS}{\mathrm{m}^{BMS}}
\newcommand{\BR}{\mathrm{m}^{BR}}
\begin{document}

  \maketitle
  \begin{abstract}
    \noindent
    The classical Farey sequence of height $Q$ is the set of rational numbers in reduced form with denominator less than $Q$. In this paper we introduce the concept of a generalized Farey sequence. While these sequences arise naturally in the study of discrete (and in particular thin) subgroups, they can be used to study interesting number theoretic sequences - for example rationals whose continued fraction partial quotients are subject to congruence conditions. We show that these sequences equidistribute, that the gap distribution converges, and we answer an associated problem in Diophantine approximation with Fuchsian groups. Moreover, for one specific example, we use a sparse Ford configuration construction to write down an explicit formula for the gap distribution. Finally for this example, we construct the analogue of the Gauss measure in this context which we show is ergodic for the Gauss map. This allows us to prove a theorem about the Gauss-Kuzmin statistics of the sequence.
    % The classical Farey sequence of height $Q$ (rationals in reduced form with denominator less than $Q$) can be written as part of the orbit of a discrete subgroup. In this paper we ask what happens when one replaces the discrete subgroup by a different one. For example when the discrete subgroup is taken to be thin. In this case, the generalized Farey sequence then corresponds to a countable set of points in a fractal. In the general case we show that this sequence equidistributes, that the gap distribution and the distribution of directions converge, and answer a classical problem in Diophantine approximation in this context. Moreover, we consider one specific example: rationals whose continued fraction expansions involve only multiples (possibly negative) of $4$. In this case we construct a sparse Ford circle configuration which we use to explicitly write out the limiting gap distribution. Lastly, for this specific example we prove the ergodicity of a Gauss-like measure for the Gauss map and use that to prove a theorem concerning Gauss-Kuzmin statistics.
    \medskip\noindent
        %% {\sc MSC2010:}
        %% 60F17; 60K35; 60K37; 60K40; 82C22; 82C31; 82C40; 82C41

        \medskip\noindent
            {\sc Key words and phrases:} 
            Farey Sequence; Continued Fractions; Equidistribution; Local Statistics; Ford Circles; Patterson-Sullivan Theory.

  \end{abstract}

  \section{Introduction} \label{sec:Introduction}

Consider the classical \emph{Farey sequence} of height $Q$:

\begin{equation}
  \tilde{\mathcal{F}}_Q := \left\{ \frac{p}{q} \in [0,1)\;:\; (p,q) \in \hat{\Z}^2, 0<q < Q\right\},
\end{equation}
where $\hat{\Z}^2$ denotes the set of primitive vectors in $\Z^2$. Naturally this sequence is a fundamental object in number theory dating back to 1802 with its introduction by Haros and subsequent work by Farey and Cauchy. For example, this sequence has connections to the Riemann hypothesis (see for example \cite{LagariasMehta2017}) and plays a fundamental role in Diophantine approximation. 

In this paper, we generalize the Farey sequence. For concreteness, one example of such a generalized Farey sequence is given by the following: throughout the paper we use the standard continued fration notation

\begin{equation}
  [a_0;a_1,\dots a_n] = a_0 + \frac{1}{a_1 + \frac{1}{a_2 + \frac{...}{a_n}}}
\end{equation}
(see for example \cite{Khinchin2003}) then denote

\begin{equation}\label{Q4}
   \cQ_4 : = \set{ [0;a_1,\dots a_k] : k \in \N \; , \; a_i \in 4\Z_{\neq 0} \; \forall i },
\end{equation}
that is, rationals whose continued fraction expansions involve only multiples (\emph{possibly negative}) of $4$. The generalized Farey sequence in this context is

\begin{equation}\label{Fhat}
  \wh{\cF}_{Q} = \{\frac{p}{q} \in \cQ_4 : 0 < q < Q, \; \gcd (p,q)=1 \}
\end{equation}
we return to this example in Section \ref{ss:Circle Packing Example} where we give a geometric interpretation of these sets. To see some of the points of $\cQ_4$ see \prettyref{fig:example}.

%%%%%%%%%%%%%%%%%%%%%%%%%%%%%%%%%%%%%%%%%%%%%%%%%%%%%%%%%%%%%%%%%%%%%%%%%%%%%%%%
%------------------------FIGURE: Example----------------------------------------
%%%%%%%%%%%%%%%%%%%%%%%%%%%%%%%%%%%%%%%%%%%%%%%%%%%%%%%%%%%%%%%%%%%%%%%%%%%%%%%%

\begin{figure}[ht!]
  \begin{center}    
    \includegraphics[width=0.9\textwidth]{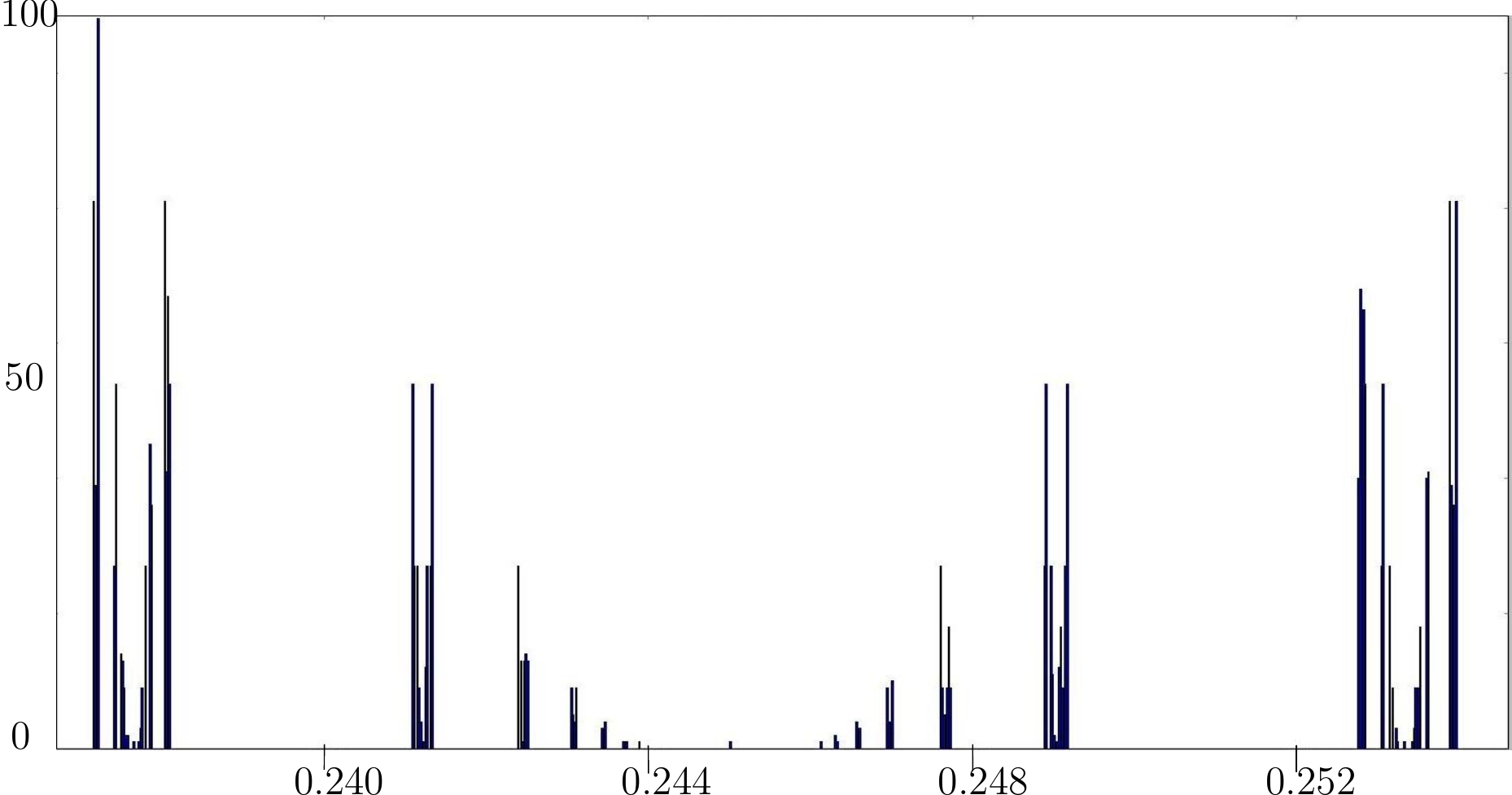}
  \end{center}
  \caption{%
     {\tt Above we show some of the points in $\cQ_4$. The graph was generated as follows: we generated all words of length $10$ (with respect to the two generators applied to $\infty$). Then separated the interval $[0,1)$ into bins of size $10^{-5}$. The above is a bar chart showing the number of points in each bin. Note that the sequence is supported on a fractal subset of the interval. This does not show $\wh{\cF}_{Q}$ (as the cut-off is with respect to word length), however will suffice for a qualitative picture.
       }
   }%
  
  \label{fig:example}

\end{figure}

There is a geometric interpretation of the classical Farey sequence which will play an integral role in this paper. Consider the groups $G=\PSL(2,\R)$ and $\Lambda := \PSL(2,\Z)< G$. $G$ acts on the hyperbolic half-space, $\half$ via M\"{o}bius transformations (see Section \ref{sec:Background Hyp}). As $\Lambda$ is a lattice, there exists a tessalation of $\half$ into disjoint, finite volume subsets such that $\Lambda$ acts transitively on them. These \emph{fundamental domains} are not  compact as each one contains a point on the boundary $\partial \half = \R \cup \{\infty\}$, at the end of a cusp. The set of such cuspidal points is exactly

\begin{equation}
  (\Lambda/\Lambda_{\infty}) \infty = \Q
\end{equation}
(we use $G_x$ to denote the stabilizer of $x$ in a group $G$). That is, the set of cuspidal points can be written as the $\Lambda$-orbit of the point at $\infty\in \partial \half$ - which corresponds to the rationals. Thus the Farey sequence of height $Q$ can be written 

\begin{equation}
  \tilde{\cF}_Q = \set{ \frac{p}{q} \in (\Lambda/\Lambda_{\infty})\infty : (p,q) \in \hat\Z^2, 0 <q <Q}
\end{equation}
 - the points in the $\Lambda$-orbit of the point at $\infty \in \partial \half$ with denominator less than $Q$. The goal of this paper is to consider a generalization of this setup, where we replace $\Lambda$ by a general (possibly infinite covolume) discrete subgroup. For our example \eqref{Fhat} the corresponding subgroup is the Hecke group

\begin{equation}\label{Gamma_0 def}
  \wh{\Gamma} = \left<\mat{1}{4}{0}{1} , \mat{0}{1}{-1}{0} \right>.
\end{equation}

\vspace{4mm}

Most of our theorems hold for general subgroups. Hence, let $\Gamma <\PSL(2,\R)$ be a \emph{general} non-elementary, finitely generated subgroup in $G$ with critical exponent $\delta_{\Gamma}$. In our context $1/2< \delta_{\Gamma} \le 1$ and $\delta_{\Gamma}$ is equal to the Hausdorff dimension of the limit set of the subgroup (we introduce these definitions in \prettyref{sec:Background Hyp}). Furthermore assume $\Gamma$ has a cusp at $\infty$ and let $\Gamma^{\infty} = (\Gamma/\Gamma_{\infty}) \infty \subset \partial \half$ denote the orbit of $\infty$. Hence, $\Gamma^{\infty}$ is the set of the cusps located at points on the boundary, isomorphic to $\infty$. Finally we assume that $\Gamma_{\infty} = \left<\begin{psmallmatrix} 1 & 1 \\0 &1\end{psmallmatrix}\right>$. I.e that the fundamental domain is periodic with period $1$ along the real line. Note that $\wh{\Gamma}$ has period $4$. However a scaling could be applied to give it period $1$ (in order to preserve the continued fraction description we refrain from doing so). 

Let

\begin{equation} \label{eqn: primitive}
  \mathcal{Z} := \left \{ (p,q) \in (0,1) \Gamma \right\} \subset \R^2,
\end{equation}
denote the analogue of primitive vectors and define

\begin{align} \label{eqn:gen Farey}
  \begin{aligned}
    \mathcal{F}_{Q}&:= \left\{ \frac{p}{q} \in [0,1) : (p,q) \in \mathcal{Z}, 0<q<Q\right \}\\
    &= \left\{\frac{p}{q} \in \Gamma^{\infty} : 0\le p <q<Q\right\}. 
  \end{aligned}
\end{align}
$\mathcal{F}_{Q} $ is the primary object of study for this article, which we call a \emph{generalized Farey sequence} (occassionally \emph{gFs}). In Subsection \ref{subsec:Asymptotic Formula} we show that asymptotically there exists a constant $0<c_{\Gamma}<\infty$ such that

\begin{equation} \label{eqn:F_Q asymptotic}
  |\mathcal{F}_Q| \sim c_{\Gamma}Q^{2\delta_{\Gamma}}.
\end{equation}

The goal of the paper is to establish the Theorems in Sections \ref{s:Horospherical Equidistribution} - \ref{sec:Gauss-Like Measure} which we describe briefly here. As the statements of the theorems require the use of fractal measures, we present them formally only after presenting the necessary notation (readers familiar with Patterson-Sullivan theory may wish to skip ahead and see the theorems now). Sections \ref{sec:Background Hyp} and \ref{sec:Background Thms} present some background and preliminary work. Subsequently the \textbf{main results of the paper} are: 

\begin{itemize}
  \item \textbf{Counting primitive points:} In \emph{Section \ref{s:Horospherical Equidistribution}} we present a theorem for the equidistribution of the horocycle flow in infinite volume subgroups (proved by Oh and Shah \cite{OhShah2013}). Then we show how this equidistribution result can be used to prove a technical theorem about counting primitive points in a sheared set (Theorem \ref{thm:Sheared}) and another technical theorem about counting primitive points in a rotated set (Theorem \ref{thm:rotation}). These theorems generalize the analogous result for lattices in \cite{MarklofStrom2010}.
  \item \textbf{Diophantine approximation by parabolics:} We prove two theorems in metric Diophantine approximation in Fuchsian groups. These are the analogues of the Erd\"{o}s-Sz\"{u}sz-Tur\'{a}n and Kesten problems in the infinite volume setting. In the classical setting, these problems were solved using homogeneous dynamics by Marklof in {\cite[Theorem 4.4]{Marklof2000}} and Athreya and Ghosh \cite{AthreyaGhosh2015}. Moreover Xiong and Zaharescu \cite{XiongZaharescu2006} and Boca \cite{Boca2008} solved the problem using number theoretic methods (by applying the BCZ map). Extending classical results in metric Diophantine approximation to the setting of Fuchsian groups is not new and was done by Patterson \cite{Patterson1976} who proved Dirichlet and Khintchine type theorems for such parabolic points. More recently, for example Beresnevich et. al. \cite{BeresnevichETAL2018} studied the equivalent problems for Kleinian groups.

      In the same section we show that Theorem \ref{thm:rotation} allows us to prove that there is a limiting distribution for the direction of primitive points, $\cZ$, as viewed from the origin. This problem has not been addressed in the Euclidean setting except for lattices (\cite{MarklofStrom2010}).

  \item \textbf{Equidistribution of gFs:} Theorem \ref{thm:main theorem} states that the gFs equidistributes over a horospherical section.  In a series of papers (\cite{Marklof2010}, \cite{Marklof2013}), Marklof showed that the (classical) Farey sequence, when embedded into a horosphere equidistributes on a particular section. This equidistribution theorem was then used to show that the spatial statistics of the Farey sequence converge. This was followed by work of Athreya and Cheung \cite{AthreyaCheung2014} who (in dimension $d=2$) were able to construct a Poincar\'{e} section for the horocycle flow such that the return time map generates Farey points. We restrict our attention to proving the equidistribution result in this more general setting. Heersink \cite{Heersink2017} generalized \cite{Marklof2010} to certain congruence subgroups of $\Lambda$ (still in the finite covolume setting). Furthermore, the method of \cite{AthreyaCheung2014} has been generalized to more general subgroups such as Hecke triangle groups (e.g \cite{Taha2019}). However we will not discuss this approach here.

  \item \textbf{Convergence of local statistics:} Theorem \ref{thm:gap distribution}, as a consequence of Theorem \ref{thm:Sheared} and Theorem \ref{thm:main theorem}, states that two sorts of local statistics converge in the limit. A corollary of one of these is that the limiting gap distribution exists. This distribution in the classical setting was originally calculated by Hall \cite{Hall1970} (and is known as the Hall distribution) and has been studied by many people since. The Hall distribution was originally put into the context of ergodic theory in \cite{BocaCobeliZaharescu2001}.

  \item \textbf{An explicit formula for the gap distribution:} In \emph{Section \ref{sec:An Explicit Example}} we restrict to the example $\wh{\Gamma}$. For this example we show that the limiting gap distribution can be explicitly written as an integral over a compact region. While the integral involves a fractal measure this is the first time such an explicit formula has been calculated in the infinite volume setting. There is much interest in finding explicit formula for limiting gap distributions for projected lattice point sets and the infinite covolume analogue. The only instance (to our knowledge) of such explicit examples are those covered in \cite{RudnickZhang2017}. In that paper Rudnick and Zhang used the relation between Farey points and Ford circles to produce examples for which they could express the limiting gap distribution explicitly (recovering, in one instance, the Hall distribution). In Section \ref{ss:Circle Packing Example} we show that the Farey sequence for $\wh{\Gamma}$ can also be used to generate a (sparse) Ford Configuration which leads to our result.
    \item \textbf{Ergodicity of a new Gauss-like measure:} Continuing to work with the example $\wh{\Gamma}$, we show that a new fractal measure takes on the role of the Gauss measure (Theorem \ref{thm:ergodicity}).  That is,  this measure is ergodic for the Gauss map. As an application we show that this ergodicity implies convergence to an explicit function of the Gauss-Kuzmin statistics in our context. This section takes inspiration from \cite{Series1985} where Series showed how the Gauss measure can be viewed as a projection of the Haar measure on a particular cross-section.

\end{itemize}

\subsection{Ford configurations for $\wh{\Gamma}$} \label{ss:Circle Packing Example}

To give some further intuition for generalized Farey sequences, in this section we show that the gFs for $\wh{\Gamma}$ admits a simple geometric interpretation which we shall return to in section \emph{Section \ref{sec:An Explicit Example}}. Returning to our example $\wh{\cF}_Q$ - \eqref{Fhat}, note that

\begin{equation}
  \wh{\Gamma}^{\infty} = \cQ_4.
\end{equation}
To see this, simply note that the two generators in \eqref{Gamma_0 def} correspond to the maps $f(x) = x+4$ and $g(x) = \frac{-1}{x}$ which generate these continued fractions.

 Consider the action of $\wh{\Gamma}$ on an initial configuration of circles in the closure $\overline{\half}$:

 \begin{gather}
   \begin{gathered}
   \mathcal{K}_0 :=(\mathcal{C}_0,\mathcal{C}_1,\mathcal{C}_2,\mathcal{C}_3) \\
   \mathcal{C}_0 = \R \quad, \quad \mathcal{C}_1 = \R + i \quad , \quad \mathcal{C}_2 = C(i/2,1/2) \quad , \quad \mathcal{C}_3 = C(i/2+4,1/2)
   \end{gathered}
 \end{gather}
where $C(z,r)$ is a circle located at $z \in \overline{\half}$ of radius $r$. We are interested in the resulting sparse Ford configuration, $\mathcal{K}:= \wh{\Gamma} \mathcal{K}_0$, shown in Figure \ref{fig:Gamma_0}. Any group element in $\wh{\Gamma}$ can be decomposed into a composition of circle inversions through vertical lines at $0$ and $4$ and $C(0,1)$ and $C(4,1)$ (these are also shown in Figure \ref{fig:Gamma_0}).

%%%%%%%%%%%%%%%%%%%%%%%%%%%%%%%%%%%%%%%%%%%%%%%%%%%%%%%%%%%%%%%%%%%%%%%%%%%%%%%%
%------------------------FIGURE: Gamma_0----------------------------------------
%%%%%%%%%%%%%%%%%%%%%%%%%%%%%%%%%%%%%%%%%%%%%%%%%%%%%%%%%%%%%%%%%%%%%%%%%%%%%%%%

\begin{figure}[ht!]
  \begin{center}    
    \includegraphics[width=0.9\textwidth]{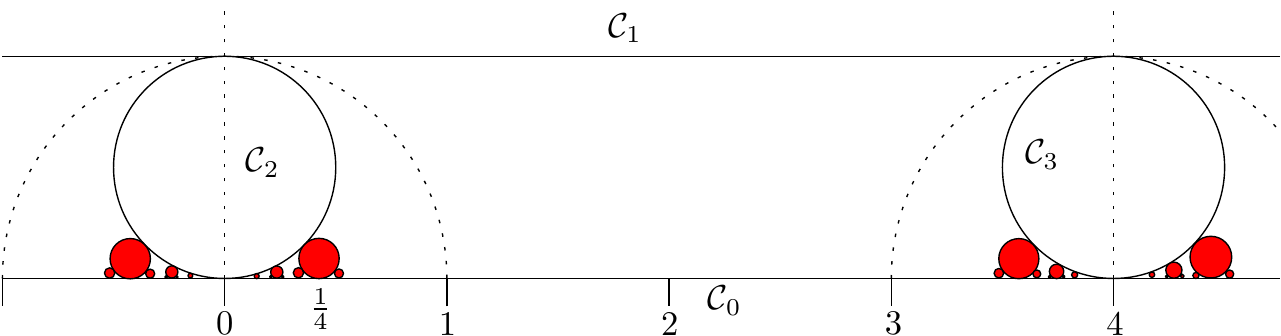}
  \end{center}
  \caption{%
     {\tt Diagram of a portion of $\mathcal{K}$. The dotted lines represent the circle inversions coresponding to the subgroup $\wh{\Gamma}$. The white circles (including the $x$-axis and horizontal line above) represent the initial configuration $\mathcal{K}_0 = (\cC_0,\cC_1,\cC_2,\cC_3)$. The filled-in circles represent some of the images. 
       }
   }%
  
  \label{fig:Gamma_0}

\end{figure}

Let $\mathcal{A}_T$ denote the set of tangencies with $\mathcal{C}_0$ in $[0,1]$ such that the circle tangent to $\mathcal{C}_0$ has diameter larger than $T^{-1}$. The way we have constructed the packing $\mathcal{K}$, these tangencies are exactly the cuspidal points of the group (i.e the tangencies are located on the orbit $\wh{\Gamma}^\infty$). Moreover one can easily show if a circle in this packing is tangent to $\mathcal{C}_0$ at $p/q$ in reduced form then the diameter is given by $1/q^2$. Hence $\cA_{Q^2} = \wh{\cF}_{Q}$, i.e the set of tangencies of circles with diameter greater than $Q^2$ is exactly the gFs of height $Q$.

Given an interval $\mathcal{I} \subset [0,1]$, let $\mathcal{A}_{T,\mathcal{I}} = \mathcal{A}_T \cap \mathcal{I}$. We label the elements of $\mathcal{A}_T = \{x^j_{T,\mathcal{I}}\}_{j=1}^{\#\mathcal{A}_{T,\mathcal{I}}}$ such that $x^j_{T, \mathcal{I}} < x^{j+1}_{T,\mathcal{I}}$ for all $j$. The gap distribution is then 

\begin{equation}\label{F0 gap}
  \wh{F}_{T,\mathcal{I}}(s) := \frac{\# \left\{ i \in [1,\#\mathcal{A}_{T,\mathcal{I}})  :  T(x^{j+1}_{T,\mathcal{I}} - x^j_{T,\mathcal{I}}) \le s \right\}  }{T^{\delta_{\wh{\Gamma}}}}
\end{equation}
for $s >0$.

\vspace{2mm}
 
In Section \ref{sec:An Explicit Example} we show that the limiting gap distribution can be explicitly calculated as a sum of integrals over compact regions involving a fractal measure presented below. This allows us to show that all gaps have size bigger than $s<2$ (not just in the limiting case), and to say something more about the regularity of $F$ and the growth of the derivative.

\begin{remark}
Of course different subgroups generate different sparse Ford configurations and have other interesting relations to continued fractions (and hence Diophantine approximation). We only address this (simplest) example here. That said, our methods generalize without additional effort to any Hecke subgroup of the form $\Gamma_c = \left< \begin{psmallmatrix}0 & c \\ 0 &  1 \end{psmallmatrix}, \begin{psmallmatrix}0 & -1 \\ 1 &  0 \end{psmallmatrix} \right>$ for $c \in \R_{>2}$ (the corresponding continued fraction description will involve $c$ rather than $4$ and this loses some elegance for non-integer $c$).
\end{remark}

  \section{Background - Hyperbolic Geometry} \label{sec:Background Hyp}

Consider the action of $G$ on $\half$ via M\"{o}bius transformations: for $z \in \half$ and $g = \begin{psmallmatrix} a & b \\ c & d \end{psmallmatrix} \in G$

\begin{align}
  \begin{aligned}
    & gz  = \frac{az +b}{cz+d} \\
    & z g = \transpose{g} z = \frac{az+c}{bz+d}.
  \end{aligned}
\end{align}
Let $X_{i} \in T^{1}(\half)$ denote the vector pointing upwards based at $i$. Denote

\begin{itemize}
  \item $K = \operatorname{Stab}_G(i)$, hence $\half \cong G/K$.
  \item $A$ - a one parameter subgroup corresponding to the unit speed geodesic flow, $\mathcal{G}_r$, on $T^{1}(\half)$. For $X_{i}$ the action of $A$ corresponds to multiplication by $\Phi^t = \begin{psmallmatrix} e^{t/2} &0 \\0 & e^{-t/2}\end{psmallmatrix}$.
  \item $N_-:= \set{n_-(x) = \mat{1}{x}{0}{1}:x \in \R }$, the contracting horosphere for $\Phi^t$.
  \item $N_+ : = \set{n_+(x) = \mat{1}{0}{x}{1}:x \in \R }$, the expanding horosphere for $\Phi^t$.
\end{itemize}
We identify points in $G$ with points in $T^1(\half)$ via the map $g \mapsto gX_{i}$ and points in $G/K$ we identify with points in $\half$ via the map $g \mapsto g i$.

\subsection{Measure Theory on Infinite Volume Hyperbolic Manifolds} \label{subsec:inf vol}

To construct the appropriate measures we require the following definitions. For a point $u \in T^1(\half)$ denote the forward and backward geodesic projections

\begin{equation}
  u^{\pm} = \lim_{r\to \infty} \mathcal{G}_r(u).
\end{equation}
Moreover, for $g\in G$ we denote $g^{\pm} = g(X_{i})^{\pm}$. Let $\cL(\Gamma) \subset \partial \half$ - the \emph{limit set} - denote the set of accumulation points of any orbit under $\Gamma$. A classical result in the field states that the Hausdorff dimension of $\cL(\Gamma)$ is the critical exponent $\delta_{\Gamma}$ (\cite{Sullivan1979}).

Given a boundary point $\xi \in \partial \half$ and two points in the interior $x,y \in \half$, define the \emph{Busemann function} to be

\begin{equation}
  \beta_{\xi} (x,y) := \lim_{t\to \infty} d(x,\xi_t) - d(y,\xi_t),
\end{equation}
where $\xi_t$ is any geodesic such that $\lim_{t\to \infty}\xi_t = \xi$. In words, the Busemann function measures the signed distance between the horospheres containing $x$ and $y$ based at $\xi$. 

Define a $\Gamma$\emph{-invariant conformal density of dimension} $\delta_{\mu}>0$ to be a family, $\{\mu_{x} : x \in \half\}$ of finite, Borel measures on the boundary $\partial \half$ such that

\begin{equation} \label{conformal invariance} 
  \gamma_{\ast}\mu_{x}(\cdot) := \mu_{x}(\gamma^{-1}\cdot) = \mu_{\gamma x}(\cdot), \quad \frac{d\mu_{x}}{d\mu_{y}}(\xi) = e^{\delta_{\mu} \beta_{\xi}(y,x)},
\end{equation}
for any $y \in \half$, $\xi\in \partial \half$ and $\gamma \in \Gamma$. Patterson \cite{Patterson1976} (in dimension $2$) and Sullivan \cite{Sullivan1979} (in higher dimensions) constructed a $\Gamma$-invariant conformal density of dimension $\delta_{\Gamma}$ supported on the limit set $\cL(\Gamma)$. We denote this conformal density $\nu_{x}$. Moreover let $\mathrm{m}_{x}$ denote the $G$-invariant density of dimension $1$ (the \emph{Lesbegue density}).

Given a point $u \in T^1(\half)$ let $\pi(u)$ denote the projection to $\half$ and let $s= \beta_{u^-}(i,\pi(u))$. From there define the following measures:

\begin{itemize}
  \item The \emph{Burger-Roblin} measure 
    \begin{equation}\label{BR}
       d\BR(u) = e^{\delta_{\Gamma}\beta_{u^-}(i,\pi(u))}e^{\beta_{u^+}(i,\pi(u))}d\nu_{i}(u^-)d\mathrm{m}_{i}(u^+)ds 
     \end{equation}
     is supported on $\{u \in T^1(\half) : u^- \in \cL(\Gamma) \}$ and is finite on $\Gamma \backslash G$ iff $\Gamma \backslash G$ has finite volume (in which case the Burger-Roblin measure is equal to the Haar measure).
  \item The \emph{Bowen-Margulis-Sullivan} measure
    \begin{equation} \label{BMS}
       d\BMS(u) = e^{\delta_{\Gamma}\beta_{u^-}(i,\pi(u))}e^{\delta_{\Gamma}\beta_{u^+}(i,\pi(u))}d\nu_{i}(u^-)d\nu_{i}(u^+)ds 
     \end{equation}
     is supported on $\{u \in T^1(\half) : u^\pm \in \cL(\Gamma) \}$ and is finite on $\Gamma \backslash G$.
\end{itemize}

Now define the \emph{Patterson-Sullivan} measure (for $N_-$) on $\partial \half \simeq \R$ to be

\begin{equation}
  d\mu^{PS}(x) := e^{\delta_{\Gamma}\beta_{x}(i, i+x)}d\nu_i(x).
\end{equation}
Note that $\supp(\mu^{PS}) = \cL(\Gamma)$. We will primarily use this Patterson-Sullivan measure, however we also use one associated to the expanding horospherical subgroup $N_+$, defined as

\begin{equation}
  d\mu^{PS}_{N_+}(x) := e^{\delta_{\Gamma}\beta_{\frac{1}{x}}(i, \frac{i}{xi+1})}d\nu_i(\frac{1}{x}).
\end{equation}

  \section{Preliminary Results} \label{sec:Background Thms}

\subsection{Proof of \eqref{eqn:F_Q asymptotic}} \label{subsec:Asymptotic Formula}

\begin{proof}[Proof of \eqref{eqn:F_Q asymptotic}]

  A rational $\frac{a}{b}$ belongs to $\cF_Q$ if and only if there exists a  $\gamma = \begin{psmallmatrix} a & \ast \\ b & \ast \end{psmallmatrix} \in \Gamma/\Gamma_{\infty}$ and $0<a<b<Q$. Using the standard Iwasawa decomposition one can write

  \begin{align}
      \gamma = \mat{\cos\theta}{-\sin\theta}{\sin\theta}{\cos\theta} \mat{y^{1/2}}{0}{0}{y^{-1/2}}
  \end{align}
  where $a= \cos\theta y^{1/2}$ and $b = \sin \theta y^{1/2}$. Therefore the problem is equivalent to counting 

  \begin{equation}
    \#\left\{\gamma \in \Gamma/ \Gamma_{\infty} : (\theta,y) \in \Omega \right \},
  \end{equation}
  where $\Omega := \{ (\theta,y) : 0 < y^{1/2} \cos\theta  <  y^{1/2}\sin \theta< Q \}$. Counting the asymptotic number of points in such a sector is the content of \cite{BourgainKontorovichSarnak2010} (see Theorem \ref{thm:bisector counting} below).
  
  Below to prove Proposition \ref{prop:nice subset} we perform this calculation more carefully (and will calculate the costant in that context, thus we leave the details till then).

\end{proof}

\subsection{Gauss-Type Decomposition} \label{subsec:NAN-Iwasawa Decomposition}

Let $M_{\vect{y}} : =\begin{psmallmatrix} y_2^{-1} & 0 \\ y_1 & y_2 \end{psmallmatrix}$, for $\vect{y} \in \R^2$. In what follows we will need the following decomposition of $T^1(\half)$.

%%%%%%%%%%%%%%%%%%%%%%%%%%%%%%%%%%%%%%%%%%%%%%%%%%%%%%%%%%%%%%%%%%%%%%%%%%%%%%%%
%-----------PROPOSITION: IWASAWA------------------------------------------------
%%%%%%%%%%%%%%%%%%%%%%%%%%%%%%%%%%%%%%%%%%%%%%%%%%%%%%%%%%%%%%%%%%%%%%%%%%%%%%%%

\begin{proposition} \label{prop:Iwasawa}
  For any $\phi \in C_c(T^1(\half))$ and any set $\mathscr{A} \subset \R^{2}$

  \begin{equation}
    \int_{N_-\{M_{\vect{y}} : \vect{y} \in \mathscr{A}\}} \phi (hM_{\vect{y}})dm^{BR}(hM_{\vect{y}}) = 2 \int_{\R \times \mathscr{A} } \phi(n_-(x) M_{\vect{y}})y_2^{2\delta_{\Gamma}-2} dy_2 dy_1 d\mu^{PS}(x). \label{eqn:Iwasawa type}
  \end{equation}

\end{proposition}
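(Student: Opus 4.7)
The plan is to change variables directly in the defining formula \eqref{BR} for $\BR$, passing from the Hopf coordinates $(u^-,u^+,s)$ to the coordinates $(x,y_1,y_2)$ given by the parametrization $u=n_-(x)M_{\vect{y}}$ (identifying $T^1\half$ with $G$ via $g\mapsto gX_i$).

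First I would write down the Hopf coordinates of $g=n_-(x)M_{\vect{y}}$ by direct matrix multiplication:
\begin{equation*}
  g^-=g\cdot 0=x,\quad g^+=g\cdot\infty=x+\tfrac{1}{y_1y_2},\quad \pi(g)=g\cdot i=x+\tfrac{y_1/y_2}{y_1^2+y_2^2}+\tfrac{i}{y_1^2+y_2^2}.
\end{equation*}
Next, using the Möbius isometry $\sigma_\xi(z)=-1/(z-\xi)$ (which sends $\xi\to\infty$ and hence reduces any Busemann value to a ratio of imaginary parts), a short calculation gives
\begin{equation*}
  s=\beta_{g^-}(i,\pi(g))=\log\bigl(y_2^2(1+x^2)\bigr),\qquad \beta_{g^+}(i,\pi(g))=\log\bigl(y_1^2(1+(g^+)^2)\bigr).
\end{equation*}

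With the Busemann factors in hand, the conformal-density relations collapse nicely. Since $d\mu^{PS}(x)=e^{\delta_\Gamma\beta_x(i,i+x)}d\nu_i(x)=(1+x^2)^{\delta_\Gamma}d\nu_i(x)$, the $(1+x^2)^{\delta_\Gamma}$ factor in $e^{\delta_\Gamma s}$ cancels, yielding $e^{\delta_\Gamma\beta_{g^-}(i,\pi(g))}d\nu_i(g^-)=y_2^{2\delta_\Gamma}d\mu^{PS}(x)$. Similarly, normalizing the $G$-invariant Lebesgue density so that $d\mathrm{m}_i(\xi)=d\xi/(1+\xi^2)$, one obtains $e^{\beta_{g^+}(i,\pi(g))}d\mathrm{m}_i(g^+)=y_1^2\,d(g^+)$.

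Finally, I would compute the Jacobian of $(y_1,y_2)\mapsto(g^+,s)$ at fixed $x$:
\[
  \det\begin{pmatrix}-\tfrac{1}{y_1^2y_2} & -\tfrac{1}{y_1y_2^2}\\ 0 & \tfrac{2}{y_2}\end{pmatrix}=-\tfrac{2}{y_1^2y_2^2},
\]
so $d(g^+)\,ds=\tfrac{2}{y_1^2y_2^2}\,dy_1\,dy_2$. Multiplying the pieces gives $y_2^{2\delta_\Gamma}\cdot y_1^2\cdot\tfrac{2}{y_1^2y_2^2}=2\,y_2^{2\delta_\Gamma-2}$, which is precisely the density on the right-hand side of \eqref{eqn:Iwasawa type}. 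The only delicate points are pinning down the sign convention for $\beta_\xi$ and the normalization of $\mathrm{m}_i$ used above; once those are fixed, the factor of $2$ and the exponent $2\delta_\Gamma-2$ fall out directly. A final remark: $N_-\{M_{\vect{y}}\}$ parametrizes the open Bruhat cell $N_-AN_+$, which is of full $\BR$-measure in $G$, so restricting the identity to this cell loses nothing.
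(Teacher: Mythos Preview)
Your proof is correct and follows essentially the same route as the paper: both arguments pull back the defining formula for $\BR$ from Hopf coordinates $(u^-,u^+,s)$ to the $(x,y_1,y_2)$ coordinates given by $g=n_-(x)M_{\vect{y}}$, and both identify $u^-=x$, $s=\log(y_2^2(1+x^2))$, and absorb the $(1+x^2)^{\delta_\Gamma}$ factor into $d\mu^{PS}$. The only stylistic difference is that the paper handles the $u^+$-factor abstractly---showing the relevant measure on $N_+$ is right-$N_+$-invariant and hence Haar, which yields $y_2^{-1}\,dy_1$ without computing $g^+$ or $\beta_{g^+}$ explicitly---whereas you carry out the full brute-force Busemann and Jacobian computation; the latter is slightly longer but entirely self-contained and makes the normalization of $\mathrm{m}_i$ explicit.
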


\begin{proof}

  The goal is to understand the forwards and backwards orbits of $u= hM_{\vect{y}}X_{i}$. First we note that 
  \begin{equation}
    u^- = (hM_{\vect{y}} X_{i})^- = hX_{i}^- \label{eqn:h unstable}
  \end{equation}
  (this follows from the definition of the stable and unstable directions of the geodesic flow). Hence we can write:

  \begin{align}
    \begin{aligned}
      s &:= \beta_{ u^-}(i,\pi( u))\\ 
      &=  \beta_{X_{i}^-}(h^{-1}i,M_{\vect{y}} i).
    \end{aligned}
  \end{align}
  Inserting the definition of the Busemann function and using its invariance properties then gives
  \begin{align}
    \begin{aligned}
      s &=  \lim_{t\to \infty} d(h^{-1}i,\Phi^{-t}i) - d(M_{\vect{y}} i,\Phi^{-t}i)    \\
      &=  \lim_{t\to \infty} d(i,\Phi^{-t}i) - d(M_{\vect{y}} i,\Phi^{-t}i)  + d(h^{-1}i,\Phi^{-t}i) - d(i,\Phi^{-t}i).  \\
    \end{aligned}
  \end{align}
    Now setting $r_0(h) = \beta_{hX_{i}^-}(i,hi)$ gives
    \begin{align}
      \begin{aligned}
        s &=  \lim_{t\to \infty} t - d(\begin{psmallmatrix}y_2^{-1} & 0 \\ 0 &y_2 \end{psmallmatrix} i,\Phi^{-t}i)  + r_0(h)  \\
        &=  \lim_{t\to \infty} t - t + 2\ln y_2  + r_0(h)  \\
        &=   2\ln y_2  + r_0(h).  \label{eqn:y-t equality}
      \end{aligned}
  \end{align}
  Thus

  \begin{equation} \label{eqn:y-t differential}
    ds = \frac{2 dy_2}{y_2}.
  \end{equation}
  Moreover, we note that by definition

  \begin{equation} \label{PS r_0}
    e^{\delta_{\Gamma}r_0(n_-(x))}d\nu_{i}(n_-(x)X_{i}) = d\mu^{PS}(x).
  \end{equation}

  Next consider the measure

  \begin{eqnarray}
    d\lambda_{g}(z) = e^{\beta_{(hM_{\vect{y}}X_{i})^+}(i,hM_{\vect{y}} i)} dm_{i}((hM_{\vect{y}}X_{i})^+),
  \end{eqnarray}
  with $g = h \begin{psmallmatrix} y_2^{-1} & 0 \\ 0 &y_2 \end{psmallmatrix}$ and $z = n_+(y_2^{-1}y_1)$. Which we can write (using the $G$-invariance of $m$)

  \begin{align}
    \begin{aligned}
      &= e^{\beta_{(g z X_{i})^+}(i,g z i)} dm_{i}(( gz X_{i})^+) \\
      &= e^{\beta_{(g z X_{i})^+}(i,g z i)} dm_{g^{-1}i}(( z X_{i})^+) 
    \end{aligned}
  \end{align}
    and then using the definition of conformal densities:
  \begin{align}
    \begin{aligned}
      &= e^{(\beta_{(g z X_{i})^+}(i,g z i) + \beta_{(z X_{i})^+}(i,g^{-1} i) )   } dm_{i}(( z X_{i})^+) \\
      &= e^{\beta_{(zX_{i})^+}(i,z i) } dm_{i}(( z X_{i})^+). \label{eqn:N Haar calc d}
    \end{aligned}
  \end{align}
  Hence $d\lambda_g = d\lambda_{e}$ and in particular $\lambda_{e}$ is $N^+$-invariant. Hence it is the Haar measure on $N_+$. Thus we have (for $y_2$ fixed)

  \begin{equation}
    d\lambda_g(z) = dz  = y_2^{-1}dy_1 \label{eqn:Haar measure N}.
  \end{equation}
  Inserting (\ref{eqn:h unstable}), (\ref{eqn:y-t equality}), (\ref{eqn:y-t differential}), \eqref{PS r_0}, and (\ref{eqn:Haar measure N}) into the definition of the $BR$-measure we get (\ref{eqn:Iwasawa type}).

\end{proof}

\subsection{Global Measure Formula}
\label{ss:Global Measure Formula}

The last theorem from the literature we require is the so-called global measure formula stated by Stratmann and Velani {\cite[Theorem 2]{StratmannVelani1995}}, which requires some set up. In actuality we only use the simpler Corollary \ref{cor:GMF cor}. As stated in \cite{StratmannVelani1995}, there exists a disjoint, $\Gamma$-invariant collection of horoballs $\mathscr{H}$ such that $(\cC_{\Gamma}\setminus \mathscr{H})/\Gamma$ is compact, where $\cC_{\Gamma}$ is the convex hull of $\cL(\Gamma)$.

We let $\eta \in \cL(\Gamma)$ be a \emph{parabolic limit point}. Define $\eta_t$ to be the unique point along the geodesic connecting $i$ to $\eta$ whose hyperbolic distance from $i$ is $t$. And define

\begin{equation} \label{b}
  b(x) = \begin{cases} 
    0 &\mbox{ if } x \in \half\setminus \mathscr{H}\\
    d(x,\partial H_{\eta}) & \mbox{ if } x \in H_{\eta} \in \mathscr{H} 
  \end{cases},
\end{equation}
where $H_{\eta}$ is the horoball at $\eta$. 

%%%%%%%%%%%%%%%%%%%%%%%%%%%%%%%%%%%%%%%%%%%%%%%%%%%%%%%%%%%%%%%%%%%%%%%%%%%%%%%%
%---------------------------------THEOREM: Gmf----------------------------------
%%%%%%%%%%%%%%%%%%%%%%%%%%%%%%%%%%%%%%%%%%%%%%%%%%%%%%%%%%%%%%%%%%%%%%%%%%%%%%%%

\begin{theorem}[{\cite[Theorem 2]{StratmannVelani1995}}]\label{thm:Gmf}
  There exists a constant $0<C<\infty$ such that for any $\eta \in \cL(\Gamma)$ - a parabolic cusp and for any $t>0$,

  \begin{equation} \label{GMF}
    C^{-1}e^{-\delta_{\Gamma}t}e^{b(\eta_t)(1-\delta_{\Gamma})} \le \nu_{i}(\cB(\eta,e^{-t})) \le C e^{-\delta_{\Gamma}t}e^{b(\eta_t)(1-\delta_{\Gamma})}
  \end{equation}
  where $\cB(\eta,e^{-t}) \subset \partial \half$ is the ball centered at $\eta$ of radius $e^{-t}$
\end{theorem}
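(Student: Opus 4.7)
The plan is to combine the shadow lemma at conical limit points with conformal invariance of $\nu_i$ under the parabolic stabilizer $\Gamma_\eta$, converting the geometric depth $b(\eta_t)$ into the correction factor $e^{(1-\delta_\Gamma)b(\eta_t)}$.

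First I would treat the shallow regime. Let $t_0(\eta)$ denote the time at which $\eta_t$ first enters the horoball $H_\eta$, so that $b(\eta_t)=0$ for $t \le t_0(\eta)$; in this range $\eta_t$ lies in the pre-image of the compact set $(\cC_\Gamma\setminus\mathscr H)/\Gamma$, and the classical shadow lemma gives $\nu_i(\cB(\eta,e^{-t}))\asymp e^{-\delta_\Gamma t}$, with constants uniform in $\eta$ by compactness. This already matches \eqref{GMF} in that range. For $t > t_0(\eta)$, set $s := b(\eta_t)$ and conjugate so that $\eta = \infty$ and $\Gamma_\eta = \langle n_-(1)\rangle$. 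The ball $\cB(\eta,e^{-t})$ transforms, with controlled distortion, into a neighborhood of $\infty$; the idea is to cover this neighborhood by parabolic translates $n_-(k) U$ of a reference compact set $U \subset \partial\half$ sitting in a $\Gamma_\eta$-fundamental domain, with $\nu_i(U) \asymp 1$ coming from the compact-part shadow lemma. Applying the conformal transformation rule $\tfrac{d\gamma_*\nu_i}{d\nu_i}(\xi) = e^{\delta_\Gamma \beta_\xi(i,\gamma i)}$ together with $d(i,n_-(k) i) = 2\sinh^{-1}(|k|/2) \asymp 2\log|k|$ gives $\nu_i(n_-(k) U) \asymp |k|^{-2\delta_\Gamma}$. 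Summing over those $|k|$ whose tile meets the ball produces a tail comparable to $e^{s(1-2\delta_\Gamma)}$ (valid since $\delta_\Gamma > 1/2$), and combining with the base factor $e^{-\delta_\Gamma t_0}$ from the compact-part estimate yields, after a short algebraic rearrangement, exactly $e^{-\delta_\Gamma t} e^{(1-\delta_\Gamma) b(\eta_t)}$.

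The principal obstacle is uniformity of the multiplicative constants over all parabolic cusps $\eta \in \cL(\Gamma)$ and all $t > 0$. Compactness of $(\cC_\Gamma\setminus\mathscr H)/\Gamma$ reduces the problem to finitely many $\Gamma$-orbits of cusps, but one still must verify uniformly bounded multiplicity of the parabolic tiling (automatic here, since every parabolic stabilizer in $\PSL(2,\R)$ has rank one) and uniform control of the Busemann distortion across tiles. These bookkeeping steps, standard but delicate, are where the $\Gamma$-invariance of $\mathscr H$ and the rank-one structure of $\Gamma_\eta$ play their essential roles.
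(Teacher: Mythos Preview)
The paper does not prove this statement: it is quoted verbatim from \cite[Theorem 2]{StratmannVelani1995} and used only as input to Corollary~\ref{cor:GMF cor}. So there is no in-paper proof to compare against.

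Your sketch follows the standard route to the global measure formula and is in the spirit of Stratmann--Velani's own argument: Sullivan's shadow lemma on the compact core, then a parabolic tiling at the cusp with $\nu_i(n_-(k)U)\asymp |k|^{-2\delta_\Gamma}$, summed to a tail $\asymp e^{(1-2\delta_\Gamma)s}$. The algebra is right: with $s=t-t_0$ one checks $e^{-\delta_\Gamma t_0}\,e^{(1-2\delta_\Gamma)s}=e^{-\delta_\Gamma t}e^{(1-\delta_\Gamma)s}$.

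There is, however, one genuine omission in your dichotomy. You assert that for $t\le t_0(\eta)$ the point $\eta_t$ lies in the compact core $\cC_\Gamma\setminus\mathscr H$, so that $b(\eta_t)=0$ and the plain shadow lemma applies. This need not hold: the geodesic ray from $i$ to a parabolic point $\eta$ may pass through horoballs $H_{\eta'}$ based at \emph{other} parabolic points $\eta'$ before eventually entering $H_\eta$. On such intermediate excursions $b(\eta_t)>0$ and the formula \eqref{GMF} predicts a non-trivial correction that your two-regime split does not produce. The full proof must therefore handle each horoball excursion separately (this is where the uniformity over the finitely many cusp orbits, which you mention, is actually used). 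For the application in this paper only the eventual regime $\eta_t\in H_\eta$ matters, since Corollary~\ref{cor:GMF cor} is a statement about small $h$ (equivalently large $t$), but if you want \eqref{GMF} for all $t>0$ you must close this gap.
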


%%%%%%%%%%%%%%%%%%%%%%%%%%%%%%%%%%%%%%%%%%%%%%%%%%%%%%%%%%%%%%%%%%%%%%%%%%%%%%%%
%-------------------------COR: GMF cor------------------------------------------
%%%%%%%%%%%%%%%%%%%%%%%%%%%%%%%%%%%%%%%%%%%%%%%%%%%%%%%%%%%%%%%%%%%%%%%%%%%%%%%%

\begin{corollary}\label{cor:GMF cor}
  Assume $\eta \in \cL(\Gamma)$ is a parabolic cusp, in a small ball around $\eta$ we can approximate the measure:
  \begin{equation}
    d\nu_{i}(\eta+h) \le h^{2\delta_{\Gamma}-2}dh.
  \end{equation}
\end{corollary}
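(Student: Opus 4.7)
The plan is to apply Theorem~\ref{thm:Gmf}, estimate $b(\eta_t)$ for a parabolic cusp $\eta$, and then convert the resulting bound on the mass of balls into the claimed density-level bound via a dyadic decomposition.

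First I would control $b(\eta_t)$. Since $\eta$ is a parabolic fixed point, it sits at the tangency of some horoball $H_\eta \in \mathscr{H}$ with $\partial\half$. The geodesic ray from $i$ to $\eta$ enters $H_\eta$ at some finite time $t_0$ and thereafter moves at unit speed into $H_\eta$, perpendicular to the horospheres based at $\eta$, so directly from \eqref{b} one reads off $b(\eta_t) = t - t_0$ for $t \ge t_0$; i.e.\ $b(\eta_t) = t + O(1)$. The cleanest way to verify this is to conjugate by an element of $G$ sending $\eta$ to $\infty$, in which case $H_\eta$ becomes a strip $\{\Im z \ge y_0\}$ for some $y_0 > 1$ and the geodesic becomes the vertical line through $i$, where the Busemann calculation is immediate.

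Plugging $b(\eta_t) = t + O(1)$ into the upper bound of \eqref{GMF} gives $\nu_i(\cB(\eta, e^{-t})) \ll e^{-(2\delta_\Gamma - 1)t}$, which after setting $r = e^{-t}$ becomes
\begin{equation}
\nu_i(\cB(\eta, r)) \ll r^{2\delta_\Gamma - 1}
\end{equation}
for small $r$. To upgrade this to the claimed comparison of measures I would decompose dyadically around $\eta$: on the annulus $A_k := \cB(\eta, 2^{-k}) \setminus \cB(\eta, 2^{-k-1})$ we have just established $\nu_i(A_k) \ll 2^{-k(2\delta_\Gamma - 1)}$, while the assumption $\delta_\Gamma > 1/2$ forces $2\delta_\Gamma - 2 \in (-1, 0]$, so $|h|^{2\delta_\Gamma - 2}$ is comparable to $2^{-k(2\delta_\Gamma - 2)}$ throughout $A_k$ and
\begin{equation}
\int_{A_k} |h|^{2\delta_\Gamma - 2}\, dh \asymp 2^{-k} \cdot 2^{-k(2\delta_\Gamma - 2)} = 2^{-k(2\delta_\Gamma - 1)}.
\end{equation}
Thus on each annulus $\nu_i$ is dominated, up to a uniform constant, by $|h|^{2\delta_\Gamma - 2} \, dh$; summing these dyadic contributions against any positive test function supported in a small ball about $\eta$ produces the stated comparison.

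The only nontrivial step is the estimate $b(\eta_t) = t + O(1)$, but this amounts to elementary hyperbolic geometry once the cusp is normalized to $\infty$; everything else is a routine passage from a bound on ball masses to a density bound via dyadic decomposition.
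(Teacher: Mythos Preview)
Your argument is essentially the paper's, fleshed out: the paper says only that the corollary follows by differentiating \eqref{GMF} with $h=e^{-t}$ and noting $b(\eta_t)\le t$, and your dyadic decomposition is a more honest version of that ``differentiation.''

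One caveat worth flagging: your final claim that summing the dyadic bounds yields the comparison against \emph{any} positive test function is too strong. For $\delta_\Gamma<1$ the measure $\nu_i$ is singular with respect to Lebesgue (it is supported on the limit set), so it cannot be dominated by the absolutely continuous measure $|h|^{2\delta_\Gamma-2}\,dh$ on all Borel sets; take $f$ to be the indicator of a piece of $\cL(\Gamma)$. The corollary is informal and should be read, as it is used later in the paper (Proposition~\ref{prop:Lipschitz}), as a bound on intervals with an endpoint at $\eta$. For that, your ball-mass estimate $\nu_i(\cB(\eta,r))\ll r^{2\delta_\Gamma-1}$ already does the job without the dyadic step.
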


This corollary follows by differentiating \eqref{GMF} with $h=e^{-t}$ and by noting  $b(\eta_t) \le t$.

  \section{Horospherical Equidistribution} \label{s:Horospherical Equidistribution}

% Let $\tilde{H}$ denote an unstable horosphere for the geodesic flow $\{\mathcal{G}^t\}$. The following is a rephrasing of {\cite[Theorem 3.6]{OhShah2013}}

% %%%%%%%%%%%%%%%%%%%%%%%%%%%%%%%%%%%%%%%%%%%%%%%%%%%%%%%%%%%%%%%%%%%%%%%%%%%%%%%%
% %-------------------------THM:OhShah---------------------------------------
% %%%%%%%%%%%%%%%%%%%%%%%%%%%%%%%%%%%%%%%%%%%%%%%%%%%%%%%%%%%%%%%%%%%%%%%%%%%%%%%%
% \begin{theorem}[{\cite[Theorem 3.3]{OhShah2013}}] \label{thm:OhShah}
%   Suppose $\Gamma\cap \tilde{H} \backslash  \tilde{H}$ is closed and $|\mu^{PS}_{\tilde{H}}|< \infty$. For any $\Psi \in \mathcal{C}_c(T^1(\half))$ and any bounded $\phi \in \mathcal{C}((\Gamma \cap H)\backslash H)$ then

%   \begin{equation}
%     \lim_{t \to \infty} e^{(1-\delta_{\Gamma})t} \int_{h \in \Gamma\cap \tilde{H} \backslash \tilde{H}} \Psi(\mathcal{G}^t(h)) \phi(h) d\mu_{\tilde{H}}^{Haar}(h) = \frac{1}{|m^{BMS}|} \mu^{PS}_{\tilde{H}}(\phi)m^{BR}(\Psi)
%   \end{equation}

% \end{theorem}

Consider an unstable horosphere for the geodesic flow $\Phi^t$, $N_+$. We parameterize the projection by $n_+: \T \to \Gamma \cap N_+ \backslash \Gamma N_+$. {\cite[Theorem 3.3]{Lutsko2018}} (which follows from {\cite[Theorem 3.6]{OhShah2013}}) states

\begin{theorem} \label{thm:equi x dependent}
  Let $\lambda$ be a Borel probability measure on $\T$ with continuous density with respect to Lebesgue. Then for every $f:\T \times \Gamma \backslash G \to \R$ compactly supported and continuous

  \begin{equation}
    \lim_{t\to \infty}e^{(1-\delta_{\Gamma})t}\int_{\T}f(x,n_+(x)\Phi^{t})d\lambda(x)  = \frac{1}{|m^{BMS}|} \int_{\T \times \Gamma \backslash G} f(x,\alpha)\lambda'(x) d\mu_{N_+}^{PS}(x) dm^{BR}(\alpha).
  \end{equation}

\end{theorem}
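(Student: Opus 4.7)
The plan is to reduce the theorem to the $x$-independent horospherical equidistribution of Oh--Shah, which states that for every $\psi \in C_c(\Gamma\backslash G)$ and every interval $I \subset \T$ with $\mu^{PS}_{N_+}(\partial I)=0$,
\[
\lim_{t \to \infty} e^{(1-\delta_\Gamma)t}\int_I \psi(n_+(x)\Phi^t)\,dx \;=\; \frac{\mu^{PS}_{N_+}(I)}{|m^{BMS}|}\int_{\Gamma\backslash G}\psi\, dm^{BR}.
\]
The strategy is to use linearity and density to pass from this to the $x$-dependent statement. First I would approximate the compactly supported continuous $f:\T \times \Gamma\backslash G \to \R$ uniformly on $\T \times K$ (where $K \subset \Gamma\backslash G$ is a compact set containing $\bigcup_{x \in \T}\supp f(x,\cdot)$) by finite linear combinations $\sum_i \phi_i(x)\psi_i(\alpha)$ of products of continuous functions, via Stone--Weierstrass on the compact product space, multiplied by a fixed bump $\chi \in C_c(\Gamma\backslash G)$ with $\chi\equiv 1$ on $K$. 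Since both sides of the claimed identity are linear in $f$ and continuous with respect to the sup norm (the needed uniform boundedness of the pre-limit left side, applied to a fixed auxiliary bump, follows from the $x$-independent theorem above), it suffices to treat $f(x,\alpha) = \phi(x)\psi(\alpha)$ with $\phi \in C(\T)$ and $\psi \in C_c(\Gamma\backslash G)$.

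In this separable case, set $F(x) := \phi(x)\lambda'(x) \in C(\T)$, so the left-hand side becomes $e^{(1-\delta_\Gamma)t}\int_\T F(x)\psi(n_+(x)\Phi^t)\,dx$. For a small $\epsilon > 0$ I would partition $\T$ into intervals $\{I_j\}$ of length at most $\epsilon$, chosen so that $\mu^{PS}_{N_+}(\partial I_j)=0$ for every $j$ (possible since the finite Borel measure $\mu^{PS}_{N_+}$ has only countably many atoms). Picking representatives $x_j \in I_j$, uniform continuity of $F$ on $\T$ gives $|F(x)-F(x_j)|\le \omega_F(\epsilon)$ on $I_j$ with $\omega_F(\epsilon)\to 0$. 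Splitting $\psi = \psi_+ - \psi_-$ into nonnegative continuous parts, the integral is sandwiched by
\[
\sum_j \bigl(F(x_j) \mp \omega_F(\epsilon)\bigr)\, e^{(1-\delta_\Gamma)t}\int_{I_j} \psi_\pm(n_+(x)\Phi^t)\,dx.
\]

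Holding the partition fixed and sending $t \to \infty$, Oh--Shah applied interval by interval converts the bounds into
\[
\frac{1}{|m^{BMS}|}\sum_j \bigl(F(x_j)\mp \omega_F(\epsilon)\bigr)\, \mu^{PS}_{N_+}(I_j) \int_{\Gamma\backslash G}\psi_\pm\, dm^{BR}.
\]
Refining the partition ($\epsilon \to 0$), the Riemann sums $\sum_j F(x_j)\mu^{PS}_{N_+}(I_j)$ converge to $\int_\T F(x)\,d\mu^{PS}_{N_+}(x)$ by continuity of $F$ against the finite Borel measure $\mu^{PS}_{N_+}$, while the error contributes at most $\omega_F(\epsilon)\,\mu^{PS}_{N_+}(\T)\,\|\psi\|_\infty \to 0$. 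Reassembling $\psi = \psi_+ - \psi_-$ produces the claimed identity.

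The main obstacle is the order-of-limits issue: Oh--Shah only gives the asymptotic after $t\to\infty$, yet the Riemann-sum approximation of $F$ against $\mu^{PS}_{N_+}$ requires refining the partition in $\epsilon$. The sandwich construction handles this precisely — for each fixed $\epsilon$-partition one sends $t\to\infty$ first, leaving an $\epsilon$-error controlled uniformly by $\omega_F(\epsilon)$ with no residual $t$-dependence. A secondary subtlety is the fractal nature of $\mu^{PS}_{N_+}$ on $\cL(\Gamma)$: partition endpoints must be $\mu^{PS}_{N_+}$-continuity points, which is always achievable since the atoms of any finite Borel measure on $\T$ are countable.
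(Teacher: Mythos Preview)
The paper does not prove this theorem; it quotes it from \cite[Theorem~3.3]{Lutsko2018} as a consequence of Oh--Shah \cite[Theorem~3.6]{OhShah2013}. Your argument correctly supplies this derivation via the natural route --- Stone--Weierstrass reduction to separable test functions $\phi(x)\psi(\alpha)$ (with the uniform-in-$t$ bound coming from the $x$-independent theorem applied to a fixed bump), followed by a Riemann-sum/sandwich approximation of $F=\phi\lambda'$ against $\mu^{PS}_{N_+}$ using the interval form of Oh--Shah --- and is presumably close to what the cited reference does.
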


Furthermore this theorem can be applied to characteristic functions (this follows in the same way as {\cite[Corollary 3.5]{Lutsko2018}})

%%%%%%%%%%%%%%%%%%%%%%%%%%%%%%%%%%%%%%%%%%%%%%%%%%%%%%%%%%%%%%%%%%%%%%%%%%%%%%%%
%------------------------Corollary: equi char funcs-----------------------------
%%%%%%%%%%%%%%%%%%%%%%%%%%%%%%%%%%%%%%%%%%%%%%%%%%%%%%%%%%%%%%%%%%%%%%%%%%%%%%%%

\begin{corollary} \label{cor:equi char funcs}
  Let $\lambda$ be a Borel probability measure on $\T$ with continuous density with respect to Lebesgue. Let $\mathcal{E} \subset \T \times \Gamma \backslash G$ be a compact set with boundary of $(\mu_{N_+}^{PS} \times m^{BR})$-measure 0. Then

  \begin{equation}
    \lim_{t\to \infty}e^{(1-\delta_{\Gamma})t}\int_{\T}\chi_{\mathcal{E}}(x,n_+(x)\Phi^{t})d\lambda(x)  = \frac{1}{|m^{BMS}|} \int_{\T \times \Gamma \backslash G} \chi_{\mathcal{E}}(x,\alpha)\lambda'(x) d\mu_{N_+}^{PS}(x) dm^{BR}(\alpha).
  \end{equation}

\end{corollary}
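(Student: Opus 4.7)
The plan is to deduce this from Theorem \ref{thm:equi x dependent} by the standard sandwich argument: approximate $\chi_{\mathcal{E}}$ from above and below by continuous compactly supported functions, apply the theorem to the approximants, and use the zero-measure boundary hypothesis to close the gap. Throughout let $\Psi_t(x) := e^{(1-\delta_{\Gamma})t}\int_{\T}\chi_{\mathcal{E}}(x,n_+(x)\Phi^{t})d\lambda(x)$ and abbreviate $\mu := \mu_{N_+}^{PS} \otimes m^{BR}$.

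First, for each $\epsilon>0$, I would construct $\phi_\epsilon^\pm \in C_c(\T \times \Gamma\backslash G)$ with
\[
\phi_\epsilon^- \le \chi_{\mathcal{E}} \le \phi_\epsilon^+,
\]
and such that $\supp(\phi_\epsilon^+ - \phi_\epsilon^-)$ is contained in a $\epsilon$-neighborhood of $\partial\mathcal{E}$ (for some fixed metric on the product). This is a routine application of Urysohn's lemma together with outer/inner regularity: $\T \times \Gamma \backslash G$ is locally compact Hausdorff, $\mathcal{E}$ is compact, and one can take $\phi_\epsilon^+$ to be supported in a compact $\epsilon$-thickening of $\mathcal{E}$, equal to $1$ on $\mathcal{E}$, and $\phi_\epsilon^-$ supported in $\mathcal{E}$ itself, equal to $1$ on the $\epsilon$-interior.

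Second, I would apply Theorem \ref{thm:equi x dependent} to each $\phi_\epsilon^\pm$, which yields the sandwich
\[
\frac{1}{|m^{BMS}|}\int \phi_\epsilon^-(x,\alpha)\lambda'(x)\,d\mu(x,\alpha) \;\le\; \liminf_{t\to\infty}\Psi_t \;\le\; \limsup_{t\to\infty}\Psi_t \;\le\; \frac{1}{|m^{BMS}|}\int \phi_\epsilon^+(x,\alpha)\lambda'(x)\,d\mu(x,\alpha).
\]
Here I use that $\lambda$ has continuous Lebesgue density $\lambda'$, which is bounded on $\T$, so $\phi_\epsilon^\pm \cdot \lambda'$ is still in $C_c$.

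Finally, let $\epsilon \to 0$. The functions $\phi_\epsilon^\pm$ converge pointwise to $\chi_{\mathcal{E}^\circ}$ and $\chi_{\overline{\mathcal{E}}}$ respectively, so their difference converges to $\chi_{\partial \mathcal{E}}$. Since $\partial\mathcal{E}$ has $\mu$-measure zero by hypothesis, and since $\phi_\epsilon^+$ is dominated by a fixed compactly supported function (hence integrable against the locally finite measure $\mu$), dominated convergence forces both ends of the sandwich to converge to $\frac{1}{|m^{BMS}|}\int \chi_{\mathcal{E}}(x,\alpha)\lambda'(x)\,d\mu(x,\alpha)$. This pins $\lim_{t\to\infty}\Psi_t$ to the desired value.

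The only mild subtlety I expect is that $m^{BR}$ is infinite on $\Gamma\backslash G$; however compactness of $\mathcal{E}$ ensures all integrals above are against the restriction of $\mu$ to a compact set, which is finite, so the dominated convergence step is unproblematic. The regularity of $\mu$ on the product, needed for the Urysohn step, follows because $\mu_{N_+}^{PS}$ is a finite Borel measure on $\T$ and $m^{BR}$ is a Radon measure on $\Gamma\backslash G$.
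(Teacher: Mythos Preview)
Your proposal is correct and is precisely the standard sandwich/Urysohn argument that the paper has in mind: the paper does not spell out a proof here but simply remarks that the corollary ``follows in the same way as \cite[Corollary 3.5]{Lutsko2018},'' which is exactly this approximation-from-above-and-below argument using Theorem~\ref{thm:equi x dependent} and the zero-measure boundary hypothesis. Your handling of the mild subtlety (finiteness on compacta despite $m^{BR}$ being infinite) is also appropriate.
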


\subsection{Counting Primitive Points in Sheared Sets}
\label{s:Counting Primitive Points in Sheared Sets}

As a straightforward consequence of Corollary \ref{cor:equi char funcs} we have the following theorem, which (in Sections \ref{s:Diophantine} and \ref{s:Local Statistics}) we show has a number of important consequences.

%%%%%%%%%%%%%%%%%%%%%%%%%%%%%%%%%%%%%%%%%%%%%%%%%%%%%%%%%%%%%%%%%%%%%%%%%%%%%%%%
%-------------------------------THEOREM:Sheared---------------------------------
%%%%%%%%%%%%%%%%%%%%%%%%%%%%%%%%%%%%%%%%%%%%%%%%%%%%%%%%%%%%%%%%%%%%%%%%%%%%%%%%

\begin{theorem}\label{thm:Sheared}
  Let $\lambda$ be a Borel probabilty measure on $\T$ with continuous density with respect to Lebesgue. Let $\cA \subset \R^2$ be a compact set with boundary of Lebesgue measure $0$. Then for every $k \ge 1$:

  \begin{equation}
    \lim_{t\to \infty} e^{(1-\delta_{\Gamma})t} \lambda \left(\{ x \in \T : \left|\cZ n_+(x)\Phi^t \cap \cA )\right| = k \}\right) = \frac{C_{\lambda}}{|\BMS|}\BR(\{\alpha \in \Gamma \backslash G : | \cZ \alpha \cap \cA| = k \}),
  \end{equation}
  where $C_{\lambda} = \mu^{PS}_{N_+}(\lambda')$.

\end{theorem}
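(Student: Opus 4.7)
The strategy is to reduce to Corollary \ref{cor:equi char funcs} applied to the set
\[
\mathcal{E}_k := \T \times \Omega_k, \qquad \Omega_k := \{\alpha \in \Gamma\backslash G : |\cZ\alpha \cap \cA| = k\}.
\]
The first step is to observe that $\Omega_k$ descends to $\Gamma\backslash G$: since $\cZ=(0,1)\Gamma$ satisfies $\cZ\gamma=\cZ$ for every $\gamma\in\Gamma$, the count $|\cZ\alpha\cap\cA|$ is invariant under $\alpha\mapsto\gamma\alpha$. With this identification, the left-hand side of the theorem is precisely $e^{(1-\delta_\Gamma)t}\int_\T \chi_{\mathcal{E}_k}(x,n_+(x)\Phi^t)\,d\lambda(x)$, and since $\chi_{\mathcal{E}_k}$ factors as $1\otimes\chi_{\Omega_k}$, the limit predicted by Corollary \ref{cor:equi char funcs} is exactly $\tfrac{C_\lambda}{|\BMS|}\BR(\Omega_k)$.

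Next I would verify the two hypotheses of the corollary: compactness of $\mathcal{E}_k$ and nullity of its $(\mu^{PS}_{N_+}\times\BR)$-boundary. For the boundary, $\partial\Omega_k$ lies inside the countable union, over $\gamma\in\Gamma/\Gamma_\infty$, of the submanifolds $S_\gamma:=\{\alpha:(0,1)\gamma\alpha\in\partial\cA\}$. Disintegrating $\BR$ via Proposition \ref{prop:Iwasawa} into a Patterson-Sullivan factor in the $u^-$ coordinate and Lebesgue-absolutely-continuous factors in the $y_1,y_2$ coordinates, for each fixed $x$ the map $(y_1,y_2)\mapsto(0,1)\gamma n_-(x)M_{\vect{y}}$ is smooth and generically submersive, so the Lebesgue nullity of $\partial\cA$ forces each $S_\gamma$ to be null; summing over the countable orbit gives $\BR(\partial\Omega_k)=0$.

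Since $\Omega_k$ is generally not compact (cosets high in the cusp can still meet $\cA$), I would truncate: pick an exhaustion $K_R\nearrow\Gamma\backslash G$ by relatively compact sets with $\BR$-null boundary, and apply the corollary to the compact sets $\mathcal{E}_k^R:=\T\times(\Omega_k\cap K_R)$, whose boundaries remain null by the above. This yields
\[
\lim_{t\to\infty} e^{(1-\delta_\Gamma)t}\,\lambda\bigl(\{x : n_+(x)\Phi^t \in \Omega_k \cap K_R\}\bigr) = \frac{C_\lambda}{|\BMS|}\BR(\Omega_k \cap K_R).
\]

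The main obstacle is passing $R\to\infty$, which requires tail bounds uniform in $t$. On the BR side, I would establish a Siegel-type inequality $\int_{\Gamma\backslash G}|\cZ\alpha\cap\cA|\,d\BR(\alpha)<\infty$ using boundedness of $\cA$ together with the Patterson-Sullivan counting behind \eqref{eqn:F_Q asymptotic}; consequently $\BR(\Omega_k\cap K_R)\to\BR(\Omega_k)$ as $R\to\infty$. On the dynamical side I would use the pointwise bound $\chi_{\Omega_k\cap K_R^c}(\alpha)\le N(\alpha)\chi_{K_R^c}(\alpha)$ with $N(\alpha):=|\cZ\alpha\cap\cA|$ (valid for $k\ge 1$), approximate $N\cdot\chi_{K_R^c}$ from above by a non-negative compactly supported continuous function, and apply Theorem \ref{thm:equi x dependent} to that function; the resulting $\limsup_t$ is bounded by the BR-integral of the approximant, which tends to zero as $R\to\infty$ by the Siegel bound. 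The case $k=0$ is then handled by complementation, writing $\chi_{\Omega_0}=1-\sum_{k\ge 1}\chi_{\Omega_k}$ and summing the tails. Combining these ingredients and letting $R\to\infty$ delivers the theorem.
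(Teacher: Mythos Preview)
Your reduction to Corollary \ref{cor:equi char funcs} via the set $\mathcal{E}_k=\T\times\Omega_k$, together with the argument that $\partial\Omega_k$ has null $\BR$-measure (being a countable union of Lebesgue-null slices), is exactly the paper's approach. The divergence comes at the compactness step: you assert that $\Omega_k$ is ``generally not compact'' and build an elaborate truncation scheme, whereas the paper simply observes that for $k\ge 1$ the set $\Omega_k$ \emph{is} compact in $\Gamma\backslash G$, so Corollary \ref{cor:equi char funcs} applies directly with no further work.

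To see the compactness you missed: $\Omega_k\subset\Omega_{\ge 1}$, and $\Omega_{\ge 1}$ is the image in $\Gamma\backslash G$ of $V:=\{g\in G:(0,1)g\in\cA\}$. In Iwasawa coordinates $g=n_-(x)\Phi^t k$ one has $(0,1)g=(0,e^{-t/2})k$, a vector of Euclidean norm $e^{-t/2}$; compactness of $\cA$ (together with $0\notin\cA$, which holds in every application in the paper) confines $t$ to a compact interval, $K$ is already compact, and $x$ is compact modulo $\Gamma_\infty$. Hence $\Gamma_\infty\backslash V$ is compact and so is its image $\Omega_{\ge 1}$. Your intuition that ``cosets high in the cusp can still meet $\cA$'' is the reverse of what happens: high in the cusp the shortest vector of $\cZ g$ has norm tending to $0$, hence eventually leaves any compact $\cA$ bounded away from the origin.

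There is, moreover, a circularity in your truncation argument as written. You propose to dominate $N\cdot\chi_{K_R^c}$ from above by a compactly supported continuous function and then invoke Theorem \ref{thm:equi x dependent}. But such a majorant exists only if $N\cdot\chi_{K_R^c}$ itself has compact support, which already requires $\supp N=\Omega_{\ge 1}$ to be compact; so the workaround presupposes the very fact it is designed to avoid. A Siegel-type $L^1$ bound on $N$ does not by itself supply a compactly supported dominating function. Finally, your complementation argument for $k=0$ is unnecessary since the theorem is stated only for $k\ge 1$; as the paper remarks after Theorem \ref{thm:gap distribution}, the $k=0$ case genuinely fails at this normalisation.
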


Theorem \ref{thm:Sheared} is an infinite covolume version of {\cite[Theorem 6.7]{MarklofStrom2010}}. The proof is a straightforward consequence of Corollary \ref{cor:equi char funcs} and the fact that if $\cA$ is compact and has boundary of Lebesgue measure $0$, then 

\begin{equation}
  \left\{ g \in \Gamma \backslash G : \cZ g \cap \cA = k \right \}
\end{equation}
is compact and has boundary of volume $0$, and the Burger-Roblin measure of a $0$ volume set is $0$.

Using {\cite[Theorem 6.10]{MohammadiOh2015}} in the same way we used {\cite[Theorem 3.6]{OhShah2013}} to derive Theorem \ref{thm:equi x dependent}, we have

%%%%%%%%%%%%%%%%%%%%%%%%%%%%%%%%%%%%%%%%%%%%%%%%%%%%%%%%%%%%%%%%%%%%%%%%%%%%%%%%
%-----------------------THEOREM: Sheared PS-------------------------------------
%%%%%%%%%%%%%%%%%%%%%%%%%%%%%%%%%%%%%%%%%%%%%%%%%%%%%%%%%%%%%%%%%%%%%%%%%%%%%%%%

\begin{theorem} \label{thm:Sheared PS}
  Let $\cA \subset \R^2$ be a compact set with boundary of Lebesgue measure $0$. Then for every $k \ge 1$:

  \begin{equation}
    \lim_{t\to \infty} \mu^{PS}_{N_+} \left(\{ x \in \T : \left|\cZ n_+(x)\Phi^t \cap \cA )\right| = k \}\right) = \frac{|\mu_{N_+}^{PS}|}{|\BMS|}\BMS(\{\alpha \in \Gamma \backslash G : | \cZ \alpha \cap \cA| = k \}).
  \end{equation}

\end{theorem}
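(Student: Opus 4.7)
The plan is to mirror the derivation of Theorem \ref{thm:Sheared} from Theorem \ref{thm:equi x dependent} and Corollary \ref{cor:equi char funcs}, but replacing the Lebesgue-to-BR equidistribution by the PS-to-BMS equidistribution of [MohammadiOh2015, Theorem 6.10]. First I would invoke that theorem to obtain the PS-weighted analogue of Theorem \ref{thm:equi x dependent}: for every $f \in C_c(\T \times \Gamma\backslash G)$,
\begin{equation}
\lim_{t\to\infty} \int_{\T} f(x, n_+(x)\Phi^t)\, d\mu^{PS}_{N_+}(x) \;=\; \frac{|\mu^{PS}_{N_+}|}{|\BMS|} \int_{\T \times \Gamma\backslash G} f(x,\alpha)\, d\mu^{PS}_{N_+}(x)\, d\BMS(\alpha).
\end{equation}
In contrast with Theorem \ref{thm:equi x dependent}, no renormalizing factor $e^{(1-\delta_\Gamma)t}$ appears, because $\mu^{PS}_{N_+}$ is the intrinsic measure under which the expanding horocycle translates equidistribute to $\BMS$ in the infinite-volume setting.

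Next I would extend the limit above from continuous compactly supported test functions to indicators $\chi_{\mathcal E}$ of compact sets $\mathcal E \subset \T \times \Gamma\backslash G$ whose boundary is $(\mu^{PS}_{N_+} \times \BMS)$-null, by the usual sandwich argument with continuous bump approximations from above and below. Applying the resulting statement to the $x$-independent function
\begin{equation}
f(x,g) := \chi_{\{h \in \Gamma\backslash G \,:\, |\cZ h \cap \cA| = k\}}(g),
\end{equation}
which is compactly supported since compactness of $\cA$ restricts the contributing $\gamma \in \Gamma/\Gamma_\infty$ to a finite set, and integrating out $x$ on the right-hand side to produce the factor $|\mu^{PS}_{N_+}|$, yields exactly the claimed identity.

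The main obstacle is the boundary-null hypothesis needed in the approximation step: one must verify that
\begin{equation}
\partial \{g \in \Gamma\backslash G : |\cZ g \cap \cA| = k\} \;\subset\; \{g : \cZ g \cap \partial \cA \neq \emptyset\}
\end{equation}
has $\BMS$-measure zero. For Theorem \ref{thm:Sheared} this is immediate from absolute continuity of $\BR$ with respect to Haar, but $\BMS$ is singular to Haar and supported on the product of two copies of $\cL(\Gamma)$, so a direct appeal is unavailable. I would handle this by decomposing $\BMS$ in local PS$\,\times\,$PS$\,\times\,$time coordinates (the $\BMS$ analogue of Proposition \ref{prop:Iwasawa}, which follows from the same Busemann computations using \eqref{BMS} in place of \eqref{BR}). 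The condition $\cZ g \cap \partial \cA \neq \emptyset$ then cuts out, for each $\gamma$ contributing from the compact support of $\cA$, a set constrained to have one of its $\R^2$-coordinates in $\partial \cA$. Using Corollary \ref{cor:GMF cor} to bound $\mu^{PS}$ near parabolic cusps by an absolutely continuous measure $|h|^{2\delta_\Gamma - 2}\,dh$, any Lebesgue-null set has PS-measure zero in a neighborhood of each cusp, so a countable covering argument over $\Gamma/\Gamma_\infty$-translates reduces the full statement to the Lebesgue-nullity of $\partial \cA$ that is in the hypotheses.
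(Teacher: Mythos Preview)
Your overall strategy is exactly the paper's: replace \cite[Theorem 3.6]{OhShah2013} by \cite[Theorem 6.10]{MohammadiOh2015} and rerun the derivation of Theorem~\ref{thm:Sheared}. The paper literally says this in one sentence and supplies no further detail, so on that level your proposal matches.

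Where you go beyond the paper is the $\BMS$-null boundary check, and there your argument breaks. Corollary~\ref{cor:GMF cor} controls $\nu_i$ only in a neighbourhood of a \emph{parabolic} limit point; the parabolic points form a countable (hence $\nu_i$-null) subset of the uncountable perfect set $\cL(\Gamma)$, so no countable union of cusp neighbourhoods covers the support of $\mu^{PS}$. More fundamentally, for $\delta_\Gamma<1$ the measure $\mu^{PS}$ is \emph{singular} with respect to Lebesgue, so the implication ``Lebesgue-null $\Rightarrow$ $\mu^{PS}$-null'' that you are trying to establish is simply false globally. Concretely, one can arrange a compact $\cA$ whose boundary contains (a scaled copy of) $\cL(\Gamma)$ crossed with an interval in the $y_2$-direction; this has zero Lebesgue area but positive measure for the PS$\times$time factor of $\BMS$ in the decomposition you describe. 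The paper does not confront this issue either; in every application (Theorems~\ref{thm:EST}, \ref{thm:Kesten}) the set $\cA$ has piecewise $C^1$ boundary, and for such $\cA$ the correct argument is a Fubini in the absolutely continuous $y_2$-coordinate: for Lebesgue-a.e.\ $y_2$ the horizontal slice of $\partial\cA$ is finite, hence $\mu^{PS}_{N_+}$-null since that measure has no atoms. That, rather than the global measure formula, is the ingredient you want.
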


In words each of these two theorems is asking for the limiting probability that a randomly sheared set contains $k$ points. In one instance (Theorem \ref{thm:Sheared}) we randomly shear the set with measure $\lambda$ and in the other (Theorem \ref{thm:Sheared PS}) we use the measure $\mu_{N_+}^{PS}$.

\subsection{Counting Primitive Points in Rotated Sets}

Similarly to Section \ref{s:Counting Primitive Points in Sheared Sets} one can ask about the probability of finding $k$ primitive points in a randomly rotated set (as oppose to a randomly sheared one). In {\cite[Section 6]{Lutsko2018}} we show that similar equidistribution results to Theorem \ref{thm:equi x dependent} and Corollary \ref{cor:equi char funcs} also hold when the horospherical subgroup $N_+$ is replaced with the rotational subgroup, $K$. Parameterize the rotation subgroup $K$ by the boundary $\partial \half$ in the natural way $x \mapsto R(x)$. Then the rotational Patterson-Sullivan measure is defined to be

\begin{equation}
  d\mu^{PS}_K(x) = e^{\beta_x(i, R(x) (e i))}d\nu_i(x).
\end{equation}
Note $\mu^{PS}_{K}$ is supported on  $\cL(\Gamma)$. Hence, the analogous theorem to Theorem \ref{thm:Sheared} follows from {\cite[Corollary 6.2]{Lutsko2018}} (in exactly the same way that Theorem \ref{thm:Sheared} follows from Corollary \ref{cor:equi char funcs}):

%%%%%%%%%%%%%%%%%%%%%%%%%%%%%%%%%%%%%%%%%%%%%%%%%%%%%%%%%%%%%%%%%%%%%%%%%%%%%%%%
%---------------------THEOREM rotation------------------------------------------
%%%%%%%%%%%%%%%%%%%%%%%%%%%%%%%%%%%%%%%%%%%%%%%%%%%%%%%%%%%%%%%%%%%%%%%%%%%%%%%%

\begin{theorem}\label{thm:rotation}
  Let $\lambda$ be a Borel probability measure on $\T$ with continuous density with respect to Lebesgue. Let $\cA \subset \R^2$ be a compact subset with boundary of Lebesgue measure $0$. Then for every $k \ge 1$

  \begin{equation}
    \lim_{t\to \infty} e^{(1-\delta_{\Gamma})t}\lambda\left(\{x \in \T : \left| \cZ R(x) \Phi^t \cap \cA \right|  = k \}\right) = \frac{D_{\lambda}}{|\BMS|} \BR (\{ \alpha \in \Gamma \backslash G : |\cZ \alpha \cap \cA | = k \})
  \end{equation}
  where $D_{\lambda} = \mu^{PS}_{K}(\lambda')$.
  
\end{theorem}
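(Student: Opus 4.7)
The plan is to mirror exactly the argument by which Theorem \ref{thm:Sheared} is deduced from Corollary \ref{cor:equi char funcs}, with the horospherical equidistribution replaced by its rotational counterpart \cite[Corollary 6.2]{Lutsko2018}. Concretely, for each $k\ge 1$ I would define
\begin{equation}
\mathcal{E}_k := \{\alpha \in \Gamma\backslash G : |\mathcal{Z}\alpha \cap \mathcal{A}| = k\},
\end{equation}
and rewrite the left-hand side of the theorem as
\begin{equation}
e^{(1-\delta_\Gamma)t}\int_{\T} \chi_{\mathcal{E}_k}(R(x)\Phi^t)\,d\lambda(x).
\end{equation}
Feeding $\chi_{\mathcal{E}_k}$ (regarded as a function of the second variable only) into the rotational equidistribution statement \cite[Corollary 6.2]{Lutsko2018} and using the product structure of the limit measure $\mu^{PS}_K \times \BR$ produces
\begin{equation}
\lim_{t\to\infty} e^{(1-\delta_\Gamma)t}\int_{\T}\chi_{\mathcal{E}_k}(R(x)\Phi^t)\,d\lambda(x) = \frac{\mu^{PS}_K(\lambda')}{|\BMS|}\,\BR(\mathcal{E}_k),
\end{equation}
which is the desired formula with $D_\lambda = \mu^{PS}_K(\lambda')$.

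To justify this application I must verify the two hypotheses on the test set: that $\mathcal{E}_k$ is compact in $\Gamma\backslash G$ and that its boundary has $(\mu^{PS}_K\times\BR)$-measure zero. Compactness follows from the fact that $\mathcal{A}$ is bounded: if $\alpha \in \Gamma\backslash G$ escapes any compact set, it must exit through a cusp of $\Gamma$, and pushing the discrete primitive set $\mathcal{Z}$ by such a divergent $\alpha$ forces $\mathcal{Z}\alpha$ to avoid the fixed bounded set $\mathcal{A}$ entirely (so $|\mathcal{Z}\alpha\cap\mathcal{A}|=0\neq k$). This is precisely the argument used in the lattice setting in \cite[Theorem 6.7]{MarklofStrom2010} and carried out in the same way in the proof of Theorem \ref{thm:Sheared}.

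For the boundary condition, I would observe that $\partial\mathcal{E}_k$ is contained in the set of $\alpha$ for which some primitive vector of $\mathcal{Z}\alpha$ lies on $\partial\mathcal{A}$. Since $\partial\mathcal{A}$ has Lebesgue measure zero and the Burger–Roblin measure locally disintegrates as $d\mu^{PS}\,dy_2\,dy_1$ along the $M_{\mathbf{y}}$ coordinates of Proposition \ref{prop:Iwasawa} (the $y_1,y_2$ factors being Lebesgue), any fixed primitive vector lands on $\partial\mathcal{A}$ only on a set of $\BR$-measure zero. Taking a countable union over the discrete set $\mathcal{Z}$ preserves this vanishing, and the same reasoning applies on the $\mu^{PS}_K$ factor.

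The main obstacle is the compactness verification: one needs uniform cusp excursion estimates for $\Gamma\backslash G$ to conclude that $\mathcal{Z}\alpha\cap\mathcal{A}$ is empty for $\alpha$ deep in a cusp. In the thin-group setting this requires invoking the fact that points in $\mathcal{Z}$ projected to high cuspidal points become sparse in any compact window, as encoded (for example) via the global measure formula \eqref{GMF}. Once this is in hand, the rest is a direct translation of the proof of Theorem \ref{thm:Sheared}, with $n_+(x)$ replaced by $R(x)$ and $\mu^{PS}_{N_+}$ replaced by $\mu^{PS}_K$ throughout.
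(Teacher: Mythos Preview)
Your proposal is correct and follows exactly the paper's approach: the paper simply states that Theorem \ref{thm:rotation} follows from \cite[Corollary 6.2]{Lutsko2018} in the same way that Theorem \ref{thm:Sheared} follows from Corollary \ref{cor:equi char funcs}, namely by applying the equidistribution result to the characteristic function of $\mathcal{E}_k$ after checking compactness and boundary measure zero. The only quibble is that you overcomplicate the compactness check at the end --- the global measure formula \eqref{GMF} is irrelevant here; compactness of $\mathcal{E}_k$ is a purely geometric fact about the $G$-action on $\mathcal{Z}=(0,1)\Gamma$ (a Mahler-type argument as in \cite[Theorem 6.7]{MarklofStrom2010}), and the paper simply asserts it without further justification.
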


% Mohammadi and Oh {\cite[Theorem 6.10]{MohammadiOh2015}}. % Once again we can extend this theorem to 'nice' characteristic functions. We  only state a restricted version of the theorem, similar to Corollary \ref{cor:equi char funcs} (except we do not require $x$ dependence of a density function).

% %%%%%%%%%%%%%%%%%%%%%%%%%%%%%%%%%%%%%%%%%%%%%%%%%%%%%%%%%%%%%%%%%%%%%%%%%%%%%%%%
% %---------------------------Thm: BMS equi---------------------------------------
% %%%%%%%%%%%%%%%%%%%%%%%%%%%%%%%%%%%%%%%%%%%%%%%%%%%%%%%%%%%%%%%%%%%%%%%%%%%%%%%%

% \begin{theorem} \label{thm:BMS equi}
%   Let $\cE \subset T^1(\Gamma \backslash \half)$ be a compact set with boundary of $\BMS$-measure $0$. Then

%   \begin{equation}
%     \int_{\T} \chi_{\cE}(n_+(x) \Phi^t) d\omega^{PS}_{N_+}(x) = \frac{|\omega^{PS}_{N_+}| \BMS(\cE)}{|\BMS|} 
%   \end{equation}

% \end{theorem}

  \section{Consequences of Theorems \ref{thm:Sheared} and \ref{thm:rotation}}
\label{s:Diophantine}

\subsection{Diophantine Approximation in Fuchsian Groups}
\label{ss:Dio}

Theorem \ref{thm:Sheared} can be used to prove several statements about the set of numbers which can be approximated by parabolic points in the limit set of the Fuchsian groups studied here. In particular, as discussed in \cite{AthreyaGhosh2015}, Erd\"{o}s-Sz\"{u}sz-Tur\'{a}n (henceforth abreviated EST) introduced the following problem in Diophantine approximation: what is the probability that a uniformly chosen point, $x \in [0,1]$, satisfies

\begin{equation}\label{mod Dirichlet}
  \left| x - \frac{p}{q} \right| \le \frac{A}{q^2}
\end{equation}
for $\frac{p}{q} \in \mathbb{Q}$ with $q \in [\theta Q, Q]$ for a fixed triple $(A,\theta,Q) \in \R_{>0}\times (0,1)\times \R_{>0}$? Hence if we let $EST(A,\theta,Q)$ be the random variable: the number of solutions to \eqref{mod Dirichlet}, the EST problem is to prove the existence of 

\begin{equation}
  \lim_{Q \to \infty} \mathbb{P}(EST(A,\theta,Q)>0).
\end{equation}
The limiting distribution for this random variable is given in \cite{AthreyaGhosh2015} in great generality. Our goal in this section is to understand the same problem with the rationals replaced by $\Gamma^{\infty}$.

Given a triple $(A,\theta,Q)$ as above and a number $x$, define (the analogue of the random variable $EST$), $E(A,\theta,Q)$ to be the number of solutions, $(p,q) \in \mathcal{Z}$, to

\begin{equation} \label{EST}
  | p - q x | \le \frac{A}{q}.
\end{equation}

%%%%%%%%%%%%%%%%%%%%%%%%%%%%%%%%%%%%%%%%%%%%%%%%%%%%%%%%%%%%%%%%%%%%%%%%%%%%%%%%
%------------------------------THEOREM:EST--------------------------------------
%%%%%%%%%%%%%%%%%%%%%%%%%%%%%%%%%%%%%%%%%%%%%%%%%%%%%%%%%%%%%%%%%%%%%%%%%%%%%%%%

\begin{theorem} \label{thm:EST}
  Given $(A,\theta) \in \R_{>0}\times (0,1) $. Let $\lambda$ be a Borel probability measure on $[0,1)$, with continuous density with respect to Lebesgue. Then

  \begin{equation} \label{EST lim}
    \lim_{Q\to \infty} Q^{2(1-\delta_{\Gamma})} \lambda (\{x \in [0,1) : E(A,\theta,Q) = k\}) = \frac{C_{\lambda}}{|\BMS|} \BR (\{ \alpha \in \Gamma \backslash G : | \mathcal{Z} \alpha \cap \mathfrak{C}_{A,\theta}| = k \}),
  \end{equation}
  where 

  \begin{equation}
    \mathfrak{C}_{A,\theta} := \{(x_1,x_2) \in \R \times \R : |x_1| x_2 \le A : \theta <x_2 < 1\}.
  \end{equation}
  Moreover,

  \begin{equation} \label{EST PS}
    \lim_{Q\to \infty} \mu_{N_+}^{PS} (\{x \in \cL(\Gamma) \cap [0,1) : E(A,\theta,Q)=k\}) =  \frac{1}{|\BMS|} \BMS (\{ \alpha \in \Gamma \backslash G : | \mathcal{Z} \alpha \cap \mathfrak{C}_{A,\theta}| = k \}).
  \end{equation}

\end{theorem}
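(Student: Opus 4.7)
The plan is to deduce both equations from Theorems \ref{thm:Sheared} and \ref{thm:Sheared PS} by recasting the EST counting as the intersection of a sheared-and-rescaled copy of $\cZ$ with a fixed compact region in the plane. For $(p,q) \in \cZ$, $x \in [0,1)$, and $t = 2\log Q$ (so $e^{t/2} = Q$), a direct matrix computation gives
\[
(p,q)\, n_+(-x)\, \Phi^t = \bigl((p-qx)Q,\; q/Q\bigr).
\]
The inequalities $|p-qx| \le A/q$ and $\theta Q \le q \le Q$ are respectively equivalent to $|(p-qx)Q|\cdot(q/Q) \le A$ and $\theta \le q/Q \le 1$, which together give exactly the defining condition of $\mathfrak{C}_{A,\theta}$. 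Hence one obtains the geometric dictionary
\[
E(A,\theta,Q)(x) = \bigl|\cZ\, n_+(-x)\, \Phi^{2\log Q} \cap \mathfrak{C}_{A,\theta}\bigr|.
\]

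The next step is to verify that $\mathfrak{C}_{A,\theta}$ satisfies the hypotheses required by the equidistribution theorems: it is bounded (the constraint $x_2 > \theta$ forces $|x_1| \le A/\theta$), and its topological boundary is contained in the union of two horizontal segments and two hyperbolic arcs, hence has planar Lebesgue measure zero. After handling the shear sign by the involution $x \mapsto -x$ on $\T$, which preserves the class of Borel probability measures with continuous density, Theorem \ref{thm:Sheared} at $t = 2\log Q$ produces the prefactor $e^{(1-\delta_\Gamma)t} = Q^{2(1-\delta_\Gamma)}$ and yields \eqref{EST lim}. The same reformulation combined with Theorem \ref{thm:Sheared PS} gives \eqref{EST PS}; here the weighting measure $\mu_{N_+}^{PS}$ is automatically supported on $\cL(\Gamma) \cap [0,1)$, as reflected in the statement.

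The only delicate point is the shear-sign bookkeeping: under $x \mapsto -x$ one must check that the constant $C_\lambda = \mu^{PS}_{N_+}(\lambda')$ appearing in Theorem \ref{thm:Sheared} is indeed the one occurring on the right-hand side of \eqref{EST lim}, and similarly for the PS version. This is dispatched either by absorbing the reflection into the target region (so that $n_+(x)$ applies directly) or by exploiting the symmetry of $\mu^{PS}_{N_+}$ under the relevant involution. No further analysis is required: the proof reduces to the geometric identity above followed by direct invocation of the two equidistribution theorems.
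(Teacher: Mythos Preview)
Your proposal is correct and follows essentially the same route as the paper: rewrite $E(A,\theta,Q)(x)$ as $|\cZ\, n_+(-x)\,\Phi^{t}\cap \mathfrak{C}_{A,\theta}|$ with $Q=e^{t/2}$, then invoke Theorem~\ref{thm:Sheared} for \eqref{EST lim} and Theorem~\ref{thm:Sheared PS} for \eqref{EST PS}. If anything you are slightly more careful than the paper, which applies Theorem~\ref{thm:Sheared} directly without pausing to check boundedness of $\mathfrak{C}_{A,\theta}$ or to address the $n_+(-x)$ versus $n_+(x)$ discrepancy; your observation that the region is symmetric in $x_1$ (so the reflection can be absorbed) is the cleanest way to handle the latter.
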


\begin{proof}

  Write \eqref{EST lim} as (with $Q=e^{t/2}$)

  \begin{multline}
    \lim_{t \to \infty}  e^{(1-\delta_{\Gamma})t} \lambda \left ( \set{ x \in [0,1] : \#\left\{ (p,q) \in \cZ : (p,q) \mat{1}{0}{-x}{1} \mat{Q}{0}{0}{Q^{-1}} \in \mathfrak{C}_{A,\theta} \right\} = k }\right)\\
      = \lim_{t \to \infty} e^{(1-\delta_{\Gamma})t} \lambda \left ( \set{ x \in [0,1] : \#\left(  \cZ n_+(-x) \Phi^t \cap \mathfrak{C}_{A,\theta} \right) = k }\right).
  \end{multline}
  To which we apply Theorem \ref{thm:Sheared} to get \eqref{EST lim}.

  \eqref{EST PS} follows in the same way except, in the last step, we apply Theorem \ref{thm:Sheared PS} instead of Theorem \ref{thm:Sheared}.

\end{proof}

Moreover, the same proof allows one to prove the \emph{Kesten problem} in our context, stated as follows: for $A>0$ and $Q$ fixed let $K(A,Q)$ denote the number of solutions to 

\begin{equation} \label{Kesten}
  \abs{\alpha q - p } \le \frac{A}{Q} \quad , \quad 1 \le q \le Q.
\end{equation}
In this case the following theorem holds:

%%%%%%%%%%%%%%%%%%%%%%%%%%%%%%%%%%%%%%%%%%%%%%%%%%%%%%%%%%%%%%%%%%%%%%%%%%%%%%%%
%--------------------Theorem: Kesten--------------------------------------------
%%%%%%%%%%%%%%%%%%%%%%%%%%%%%%%%%%%%%%%%%%%%%%%%%%%%%%%%%%%%%%%%%%%%%%%%%%%%%%%%

\begin{theorem}\label{thm:Kesten}
  Given $A>0$ Theorem \ref{thm:EST} holds with $E(A,\theta,Q)$ replaced by $K(A,Q)$ and $\mathfrak{C}_{A,\theta}$ replaced by

  \begin{equation}
    R_A = \set{(x,y) \in \R^2 : \abs{x} \le A, 0 \le y \le 1 }
  \end{equation}

\end{theorem}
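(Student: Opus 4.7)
The plan is to mirror the proof of Theorem \ref{thm:EST} almost verbatim, the only difference being the geometry of the target set which encodes the Kesten condition. The main task is to check that the Kesten condition \eqref{Kesten} translates cleanly into membership in $R_A$ under the same combination of shear and geodesic flow used in the EST case.

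First I would set $Q = e^{t/2}$ and compute the matrix action on a pair $(p,q) \in \cZ$:
\begin{equation}
  (p,q)\, n_+(-\alpha)\, \Phi^t = \bigl((p-q\alpha)Q,\; q/Q\bigr).
\end{equation}
This point lies in $R_A$ if and only if $|p - q\alpha| \le A/Q$ and $0 \le q \le Q$, which (after noting that $(0,q) \notin \cZ$ for $q=0$, so the lower endpoint is harmless) is precisely the condition being counted by $K(A,Q)$. Therefore
\begin{equation}
  \lambda\bigl(\{ \alpha \in [0,1) : K(A,Q) = k \}\bigr)
  = \lambda\bigl(\{ \alpha \in [0,1) : \#(\cZ\, n_+(-\alpha)\, \Phi^t \cap R_A) = k \}\bigr),
\end{equation}
and identically with $\mu^{PS}_{N_+}$ in place of $\lambda$.

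Next, I would verify that $R_A$ satisfies the hypotheses of Theorems \ref{thm:Sheared} and \ref{thm:Sheared PS}: it is compact, and its boundary (a finite union of line segments) has Lebesgue measure zero. Applying Theorem \ref{thm:Sheared} then yields the $\lambda$-version of the limit with constant $C_\lambda = \mu^{PS}_{N_+}(\lambda')$ and target $R_A$; applying Theorem \ref{thm:Sheared PS} yields the $\mu^{PS}_{N_+}$-version with constant $|\mu^{PS}_{N_+}|/|\BMS|$.

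I do not expect a serious obstacle here: everything reduces, as in Theorem \ref{thm:EST}, to identifying the right target set. The only mildly delicate point is that $R_A$ touches the horizontal line $\{y=0\}$ (unlike $\mathfrak{C}_{A,\theta}$, which is bounded away from it), but since the origin is not in $\cZ$ and the boundary of $R_A$ has measure zero, the equidistribution statements go through unchanged.
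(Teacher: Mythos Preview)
Your proposal is correct and matches the paper's approach exactly: the paper simply remarks that ``the same proof'' as for Theorem~\ref{thm:EST} gives the Kesten statement, and your write-up supplies precisely that argument by identifying the Kesten condition with membership in $R_A$ under the map $(p,q)\mapsto (p,q)\,n_+(-\alpha)\,\Phi^t$ and then invoking Theorems~\ref{thm:Sheared} and~\ref{thm:Sheared PS}. The only thing to note is that the discrepancy between $1\le q$ in \eqref{Kesten} and $0\le y$ in $R_A$ is already present in the paper's own statement, so your handling of it is as legitimate as the paper's.
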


\subsection{Directions of Primitive Points}

Given a point in $\R^2$ (taken here to be the origin, however this is not necessary), one can ask how the directions of primitive points $\cZ$ distribute for an observer at that point. The corollary of Theorem \ref{thm:rotation} below answers this question.

Let $\cD_t(\sigma, v) \subset S^1_1$ be the interval in the unit sphere with center $v$ and length $\sigma e^{-t}$, and set

\begin{equation}
  \cN_t(\sigma, v;\cZ) := \#\left\{ \vect{y} \in \cZ_t : \|\vect{y}\|^{-1} \vect{y} \in \cD_t(\sigma,v) \right\},
\end{equation}
where $\cZ_t = \{z \in \cZ : \|z\|\le e^t\}$. 

%%%%%%%%%%%%%%%%%%%%%%%%%%%%%%%%%%%%%%%%%%%%%%%%%%%%%%%%%%%%%%%%%%%%%%%%%%%%%%%%
%----------------------Corollary: directions------------------------------------
%%%%%%%%%%%%%%%%%%%%%%%%%%%%%%%%%%%%%%%%%%%%%%%%%%%%%%%%%%%%%%%%%%%%%%%%%%%%%%%%

\begin{corollary}\label{cor:directions}

  Let $\lambda$ be a probability measure on $\T$, with continuous density with respect to Lebesgue. For $k \in \N_{>0}$ we have

  \begin{equation}
    \lim_{t\to \infty} e^{(1-\delta_{\Gamma})t}\lambda \left( \{ v\in \T : \cN_t(\sigma, v;\cZ) = k \}\right) = \frac{D_{\lambda}}{|\BMS|}\BR\left(\{ \alpha \in \Gamma \backslash G : |\cZ \alpha \cap \mathfrak{C}_\sigma| = k \}\right)
  \end{equation}
  where, in polar coordinates

  \begin{equation}
    \mathfrak{C}_{\sigma} = \{ x =(r\theta) \in \R^2 : r< 1 , |\theta| < \sigma \pi \}.
  \end{equation}

\end{corollary}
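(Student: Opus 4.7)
The plan is to reduce Corollary \ref{cor:directions} to a direct application of Theorem \ref{thm:rotation} by identifying the count $\cN_t(\sigma, v; \cZ)$ with $|\cZ R(v)\Phi^t \cap \mathfrak{C}_\sigma|$ for the compact region $\mathfrak{C}_\sigma$ described in the statement. Granted this identification, the corollary is immediate: $\mathfrak{C}_\sigma$ is a closed sector of finite radius with piecewise smooth boundary, so its boundary has Lebesgue measure zero; Theorem \ref{thm:rotation} therefore applies with $\cA = \mathfrak{C}_\sigma$ and yields exactly the claimed limit, with $D_\lambda = \mu^{PS}_K(\lambda')$.

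For the identification, I would work in polar coordinates. Fix the convention so that $R(v)$, acting on the right on $\R^2$, sends the direction $v \in S^1$ onto the contracting eigendirection of the geodesic flow $\Phi^t = \mathrm{diag}(e^{t/2}, e^{-t/2})$. Then $\vect{y} R(v)$ has the same Euclidean length as $\vect{y}$, while a vector pointing close to $v$ is rotated into alignment with the contracting axis; the subsequent right-action of $\Phi^t$ compresses the aligned coordinate by $e^{-t/2}$ and dilates the orthogonal coordinate by $e^{t/2}$. A direct calculation in polar coordinates then shows that $\vect{y} R(v)\Phi^t \in \mathfrak{C}_\sigma$ if and only if $\vect{y} \in \cZ_t$ and $\|\vect{y}\|^{-1}\vect{y} \in \cD_t(\sigma, v)$, the length $\sigma e^{-t}$ of the arc being precisely what is stretched by $e^{t}$ to the angular width $\sim \sigma$ of $\mathfrak{C}_\sigma$. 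This gives
\begin{equation*}
\cN_t(\sigma, v; \cZ) = |\cZ R(v) \Phi^t \cap \mathfrak{C}_\sigma|,
\end{equation*}
after which the corollary follows by plugging $\cA = \mathfrak{C}_\sigma$ into Theorem \ref{thm:rotation}.

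The main obstacle I anticipate is the bookkeeping needed to fix all the conventions consistently: matching the parameterization $x \mapsto R(x)$ of the rotational subgroup $K$ to the angular coordinate on $S^1$ used to define $\cD_t$, choosing the sign of rotation so that the direction $v$ is indeed sent into the contracting eigendirection of $\Phi^t$, and tracking the precise scaling between $\|\vect{y}\|$ and the $e^{t/2}$-compression of the geodesic flow so that vectors in the thin sector land exactly inside $\mathfrak{C}_\sigma$ (including absorbing the factor of $\pi$ that appears in the defining inequality $|\theta| < \sigma \pi$). Once these conventions are pinned down unambiguously, the remainder of the argument is a routine polar-coordinate change of variables followed by the invocation of Theorem \ref{thm:rotation}.
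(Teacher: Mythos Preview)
Your proposal is correct and matches the paper's approach: the paper's entire proof is the single sentence ``This Corollary follows directly from Theorem \ref{thm:rotation},'' and you have supplied the natural unpacking of that sentence, namely the identification $\cN_t(\sigma,v;\cZ)=|\cZ R(v)\Phi^t\cap\mathfrak{C}_\sigma|$ followed by invoking Theorem \ref{thm:rotation} with $\cA=\mathfrak{C}_\sigma$. The bookkeeping concerns you flag (sign of the rotation, matching the $e^{-t}$ arc length to the angular width of $\mathfrak{C}_\sigma$, the factor of $\pi$) are exactly the content hidden behind the word ``directly'' and are not spelled out in the paper either.
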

\noindent This Corollary follows directly from Theorem \ref{thm:rotation}.

  \section{Equidistribution of gFs} \label{sec:Proof of Main Theorem}

\subsection{Statement} \label{subsec:equidistribution}

In addition to Theorem \ref{thm:Sheared} another important consequence of the equidistribution statements in Section \ref{s:Horospherical Equidistribution}, is the following theorem, stating that the gFs equidistributes on a horospherical section. This is a generalization of {\cite[Theorem 6]{Marklof2010}}, to the infinite covolume setting.

%%%%%%%%%%%%%%%%%%%%%%%%%%%%%%%%%%%%%%%%%%%%%%%%%%%%%%%%%%%%%%%%%%%%%%%%%%%%%%%%
%----------------------THM:main theorem-----------------------------------------
%%%%%%%%%%%%%%%%%%%%%%%%%%%%%%%%%%%%%%%%%%%%%%%%%%%%%%%%%%%%%%%%%%%%%%%%%%%%%%%%

\begin{theorem}\label{thm:main theorem} 
  Let $\sigma \in \R$ and $Q=e^{(t-\sigma)/2}$. Let $f: \T \times \Gamma \backslash G  \to \R$ be bounded continuous and supported on a connected set with finite volume. Then \small

  \begin{equation}\label{eqn:equidistribution}
    \lim_{t \to \infty} e^{-\delta_{\Gamma}t} \sum_{r \in \mathcal{F}_Q} f(r,n_-(r)\Phi^{-t})     =\frac{e^{(\delta_{\Gamma}-1)\sigma}}{|m^{BMS}|}\int_{\T\times \T}\int_{\sigma}^{\infty}\tilde{f}(x,n_-(w)\Phi^{-r})e^{\delta_{\Gamma}r}drd\mu^{PS}(w) d\mu_{N_+}^{PS}(x)
  \end{equation}\normalsize
  where $\tilde{f}(x,\alpha) := f(x, \transpose{\alpha}^{-1})$ and $r_0(h)$ is defined in Proposition \ref{prop:Iwasawa}.
\end{theorem}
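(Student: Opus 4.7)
The argument adapts Marklof's proof of \cite[Theorem~6]{Marklof2010} to the thin-group setting, with Theorem \ref{thm:equi x dependent} (Burger-Roblin horocycle equidistribution) as the main dynamical input and Proposition \ref{prop:Iwasawa} providing the geometric dictionary between Farey points and the cusp structure of $\Gamma\backslash G$. The starting point is an explicit cusp parametrization: for each $r = p/q \in \mathcal{F}_Q$, fix a coset representative $\gamma_r = \begin{psmallmatrix} p & b \\ q & d \end{psmallmatrix} \in \Gamma/\Gamma_\infty$ with $pd - bq = 1$; a direct matrix computation yields
\begin{equation*}
\gamma_r^{-1}\,n_-(r)\,\Phi^{-t} = w\,M_{(de^{-t/2},\ e^{t/2}/q)},\qquad w = \begin{psmallmatrix}0 & 1 \\ -1 & 0\end{psmallmatrix}\in K.
\end{equation*}
Thus in $\Gamma\backslash G$ one has $[n_-(r)\Phi^{-t}] = [wM_{\vec{y}}]$ with $y_2 = e^{t/2}/q$, and the constraint $q < Q = e^{(t-\sigma)/2}$ is equivalent to $y_2 > e^{\sigma/2}$: every summand projects to the cusp region above height $e^\sigma$.

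The next step passes from a discrete sum to a horocycle integral. Using the transpose identity $f(r, n_-(r)\Phi^{-t}) = \tilde{f}(r, n_+(-r)\Phi^t)$ we rewrite the sum as one over evaluations of $\tilde{f}$ on the expanding horocycle. Around each $r = p/q$ we place a short interval $I_r \subset [0,1)$ whose width is prescribed by the Farey/Ford geometry: in the lattice case $|I_r| \sim 1/(qQ)$, while in the thin case the intervals come from the Ford-type configuration of Section \ref{ss:Circle Packing Example}, weighted by the Patterson-Sullivan density furnished by Corollary \ref{cor:GMF cor}. Continuity of $f$ on its compact support, combined with the identity above applied at $x \in I_r$, then yields
\begin{equation*}
\sum_{r \in \mathcal{F}_Q} f(r, n_-(r)\Phi^{-t}) = e^{\delta_\Gamma t}\int_{0}^{1}\chi_\sigma\!\bigl(n_+(-x)\Phi^t\bigr)\,\tilde{f}\!\bigl(x, n_+(-x)\Phi^t\bigr)\,dx + o(e^{\delta_\Gamma t}),
\end{equation*}
where $\chi_\sigma$ is the indicator of the cusp region $\{y_2 > e^{\sigma/2}\}$ and the multiplicative $e^{\delta_\Gamma t}$ comes from the aggregate scaling $\sum_r |I_r| \asymp e^{-\delta_\Gamma t}$.

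Now apply Corollary \ref{cor:equi char funcs} to the test function $\chi_\sigma \tilde{f}$ with $\lambda$ equal to Lebesgue on $[0,1)$. This converts the $t\to\infty$ limit of the integral into
\begin{equation*}
\frac{1}{|\BMS|}\int_{\T \times \Gamma\backslash G}\chi_\sigma(\alpha)\,\tilde{f}(x,\alpha)\,d\mu^{PS}_{N_+}(x)\,d\BR(\alpha).
\end{equation*}
Finally, apply Proposition \ref{prop:Iwasawa} to unfold the inner $\BR$-integral in the coordinates $\alpha = n_-(w)M_{(y_1,y_2)}$: the Gauss-type decomposition produces the weight $y_2^{2\delta_\Gamma-2}\,dy_1\,dy_2\,d\mu^{PS}(w)$; after substituting $r = 2\log y_2$ (and performing the $y_1$ integration which, together with the transpose twist encoded in $\tilde{f}$, allows us to identify $\alpha \leftrightarrow n_-(w)\Phi^{-r}$) this becomes $e^{\delta_\Gamma r}\,dr\,d\mu^{PS}(w)$. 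The cutoff $y_2 > e^{\sigma/2}$ becomes $r > \sigma$, and the prefactor $e^{(\delta_\Gamma-1)\sigma}$ arises from matching normalizations at the lower cusp boundary.

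The principal technical difficulty is making the second step rigorous in the thin-group setting. In the lattice case the Farey tessellation gives an exact partition of $[0,1)$ by intervals of width $\sim 1/(qQ)$, so the sum is literally a Riemann sum against Lebesgue measure. For thin groups $\mathcal{F}_Q \subset \mathcal{L}(\Gamma)$ is a fractal subset of $[0,1)$ and the natural thickenings $I_r$ only cover a $\mu^{PS}_{N_+}$-full subset; the global measure formula (Theorem \ref{thm:Gmf}) together with Corollary \ref{cor:GMF cor} must be invoked to verify that the correct Riemann-sum density is the Patterson-Sullivan measure $\mu^{PS}_{N_+}$ rather than Lebesgue, and that contributions from $x \notin \mathcal{L}(\Gamma)$ vanish in the limit. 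A secondary concern is showing that $\chi_\sigma$ has $\BR$-boundary of measure zero, needed to apply the characteristic-function equidistribution of Corollary \ref{cor:equi char funcs}; this follows from non-atomicity of $\nu_i$ on $\mathcal{L}(\Gamma)$ for non-elementary $\Gamma$.
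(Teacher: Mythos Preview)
Your overall architecture --- thicken the Farey points, use the transpose identity $f(r,n_-(r)\Phi^{-t})=\tilde f(r,n_+(-r)\Phi^t)$, apply horocycle equidistribution, unfold via Proposition~\ref{prop:Iwasawa} --- matches the paper, and your matrix identity placing $[n_-(r)\Phi^{-t}]$ in the cusp is correct. But the step converting the discrete sum to a horocycle integral contains a genuine misconception. You frame the main difficulty as showing that ``the correct Riemann-sum density is the Patterson--Sullivan measure $\mu^{PS}_{N_+}$ rather than Lebesgue'', and propose invoking the global measure formula to supply PS weights on the intervals $I_r$. This is backwards: the paper thickens each Farey point by a \emph{fixed-width Lebesgue} interval $\{|x-r|<\epsilon e^{-t}\}$, and the resulting set $\mathcal F_Q^\epsilon$ is characterized as $\{x:\mathcal Z\, n_+(-x)\Phi^t\cap\mathfrak C_\epsilon\neq\emptyset\}$ for the narrow cone $\mathfrak C_\epsilon=\{|y_1|<\epsilon y_2,\ \theta<y_2\le1\}$. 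Theorem~\ref{thm:equi x dependent} then consumes a Lebesgue average in $x$ and \emph{produces} $\mu^{PS}_{N_+}$ on the right-hand side automatically; the global measure formula is never invoked, and Corollary~\ref{cor:GMF cor} plays no role in this proof.

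The missing ingredients in your sketch are precisely the two auxiliary truncations the paper introduces. The width parameter $\epsilon$ (the constraint $|y_1|<\epsilon y_2$, not merely $y_2>e^{\sigma/2}$) is what makes the indicator detect points \emph{near} a Farey fraction; your $\chi_\sigma$ alone marks the entire cusp region and does not isolate the sum. The lower cutoff $\theta$ (restricting first to $\theta Q<q<Q$) is what guarantees that the translates $\gamma\mathcal H_\epsilon^1$ are disjoint on the support of $f$, so that the characteristic function unfolds to a single term on $\Gamma_\infty\backslash G$. With these in place the discrete sum is recovered from the Lebesgue integral by dividing by $2\epsilon e^{-t}$ and using uniform continuity, then sending $\epsilon\to0$ and finally $\theta\to0$. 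Without them your displayed identity $\sum_r f=e^{\delta_\Gamma t}\int\chi_\sigma\tilde f\,dx+o(e^{\delta_\Gamma t})$ is not justified, and the proposed route through Ford-type variable-width intervals and Corollary~\ref{cor:GMF cor} does not lead to the claimed limit.
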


\begin{remark}
  \cite{Marklof2010} and \cite{Marklof2013} treat Farey sequences in general dimension. However in the infinite covolume setting equidistribution results for $\SL(d,\R)$ have not yet been proved (to our knowledge). 
\end{remark}

\subsection{Proof}

\begin{proof}[Proof of \prettyref{thm:main theorem}]

The proof will follow the same lines as {\cite[Proof of Theorem 6]{Marklof2010}} with several exceptions as we are not working with Haar measure.

Note first that by setting $f(x,\alpha) = f_0(x,\alpha \Phi^{-\sigma})$ for $f_0$ bounded and continuous we may assume that $\sigma=0$. \\

\begin{steps}[leftmargin=0cm,itemindent=.5cm,labelwidth=\itemindent,labelsep=0cm,align=left]

  %%%%%%%%%%%%%%%%%%%%%%%%%%%%%%-Item compact support%%%%%%%%%%%%%%%%%%%%%%%%%%%

  \item \label{itm:compact support} \hspace{2mm} First we show that we can reduce the theorem to $f$ compactly supported via a standard approximation argument. Assume the theorem holds for compactly supported functions. Now consider a bounded, continuous function, $f$ supported on a finite-volume set. Fix $\epsilon>0$ and consider (for some $t$) the difference

\begin{equation} \label{f compact}
  \left| e^{-\delta_{\Gamma}t} \sum_{r \in \mathcal{F}_Q} f(n_-(r) \Phi^{-t})) - \frac{1}{|m^{BMS}|} \int_{\T} \int_0^{\infty} \tilde{f}(n_-(w)\Phi^{-r})e^{\delta_{\Gamma} r}dr d\mu^{PS}(w)  \right|.
\end{equation}
Now decompose $f = f_1 + f_2$ such that $f_1$ is supported on a compact set and $f_2$ is supported on a set of volume $\varrho>0$ (as $\mbox{supp}(f)$ has finite volume $\varrho$ can be chosen arbitrarily small) and both are bounded and continuous. Hence the difference \eqref{f compact} is bounded above by

\begin{multline}
  \left|e^{-\delta_{\Gamma}t} \sum_{r \in \mathcal{F}_Q} f_1(n_-(r) \Phi^{-t})) - \frac{1}{|m^{BMS}|} \int_{\T} \int_0^{\infty}\tilde{f_1}(n_-(w)\Phi^{-r})e^{\delta_{\Gamma} r}dr d\mu^{PS}(w)  \right|\\
  + \left|e^{-\delta_{\Gamma}t} \sum_{r \in \mathcal{F}_Q} f_2(n_-(r) \Phi^{-t})) - \frac{1}{|m^{BMS}|} \int_{\T} \int_0^{\infty} \tilde{f_2}(n_-(w)\Phi^{-r})e^{\delta_{\Gamma} r}dr d\mu^{PS}(w)  \right|.
\end{multline}
Applying \prettyref{thm:main theorem} for compact functions implies we can take $t$ large enough that the first term is less than $\epsilon/2$. Since $f$ is supported on a finite connected set and $f$ is bounded, then it follows from {\cite[Theorem 4.2]{Lutsko2018}} that there exists a $C>0$ such that
\begin{equation}
  \left|e^{-\delta_{\Gamma}t} \sum_{r \in \mathcal{F}_Q} f_2(n_-(r) \Phi^{-t}))\right| \le C \int_{\supp{f_2}} f_2(g) d\BR(g) \le \epsilon/4
\end{equation}
for all $t>t_0$ and some $t_0>0$. Lastly, consider

\begin{equation}
  \left|\int_{\T} \int_0^{\infty} \tilde{f_2}(n_-(w)\Phi^{-r})e^{\delta_{\Gamma} r}dr d\mu^{PS}(w))  \right| < \infty.
\end{equation}
As $\Gamma$ has a cusp, $\delta_{\Gamma}>1/2$ which is greater than $0$. Thus the Patterson-Sullivan measure of $\mbox{supp}(\tilde{f}_2) \cap \cL(\Gamma)$ goes to $0$ as $\vol(\mbox{supp}(\tilde{f}_2))$ goes to $0$. Hence we can choose $\varrho$ such that \eqref{f compact} is bounded by $\epsilon$. Thus \prettyref{thm:main theorem} for compactly supported $f$ implies the theorem for $f$ with finite volume support.

Henceforth take $f$ to be compactly supported.
 
  %%%%%%%%%%%%%%%%%%%%%%%%%%%%%%-Item: uniform continuity%%%%%%%%%%%%%%%%%%%%%%%
 
  \item \label{itm:uniform continuity} \hspace{2mm} Note that because $f$ is continuous and has compact support it is uniformly continuous. Hence for every $\varrho>0$ there exists a $\epsilon>0$ such that for all $(x,\alpha), (x'\alpha') \in \R \times G$

    \begin{equation}
      |x-x'|<\epsilon \;\;\;\;\;\;\; d(\alpha,\alpha')<\epsilon
    \end{equation}
    imply $|f(x,\alpha)-f(x',\alpha')|<\varrho$

  %%%%%%%%%%%%%%%%%%%%%%%%%%%%%%-Item: Setup%%%%%%%%%%%%%%%%%%%%%%%%%%%%%%%%%%%%

  \item \label{itm:setup} \hspace{2mm} For $0\le \theta <1$ and $\epsilon>0$ define
    \begin{eqnarray}
      \mathcal{F}_{Q,\theta} &:=& \left\{ \frac{p}{q} \in [0,1): (p,q) \in \mathcal{Z}, \; \theta Q<q<Q\right \}\\
      \mathcal{F}^{\epsilon}_{Q} &:=& \bigcup_{r\in \mathcal{F}_{Q,\theta}+\Z}\left\{ x\in \R : \| x-r\|<\epsilon e^{-t} \right\}.
    \end{eqnarray}
    The latter we can write as
    
    \begin{equation}
      \mathcal{F}_{Q}^{\epsilon} = \bigcup_{\vect{a} \in \mathcal{Z}} \left\{ x\in \R: (a_1,a_2)n_{+}(x)\Phi^t \in \mathfrak{C}_{\epsilon}\right \},
    \end{equation}
    where

    \begin{equation} \notag
      \mathfrak{C}_{\epsilon} := \{(y_1,y_2) \in \R^2 : |y_1| < \epsilon y_2,\quad \theta<y_2\le 1 \}.
    \end{equation}

    Our goal is to write the characteristic function for $\mathcal{F}_{Q}^{\epsilon}$ as a sum over simpler characteristic functions which can write as a disjoint union. Thus, let

    \begin{equation}
      \mathcal{H}_{\epsilon}:= \bigcup_{\vect{a}\in \mathcal{Z}} \mathcal{H}_{\epsilon}(\vect{a}),   
      \quad \quad
      \mathcal{H}_{\epsilon}(\vect{a}):= \{\alpha \in G : (a_1,a_2) \alpha \in \mathfrak{C}_{\epsilon} \}.
    \end{equation}
    By considering the bijection
    
    \begin{equation}\notag
      \Gamma_{N_-} \backslash \Gamma \to \mathcal{Z}, 
      \quad \quad 
      \Gamma_{N_-}\gamma \mapsto (0,1) \gamma
    \end{equation}
    we can write

    \begin{align}
      \begin{aligned}
        \mathcal{H}_{\epsilon} &= \bigcup_{\gamma \in \Gamma_{N_-} \backslash \Gamma} \mathcal{H}_{\epsilon}((0,1) \gamma)\\
      &=\bigcup_{\gamma \in \Gamma_{N_-} \backslash \Gamma} \gamma \mathcal{H}_{\epsilon}^1,
      \end{aligned}
   \end{align}
    where
    \begin{equation} \notag
      \mathcal{H}_{\epsilon}^1 := \mathcal{H}_{\epsilon}((0,1)) = H \{M_{\vect{y}} : \vect{y} \in \mathfrak{C}_{\epsilon}\}
    \end{equation}
    with $M_{\vect{y}} := \mat{y_2^{-1}}{0}{y_1}{y_2}$.

  %%%%%%%%%%%%%%%%%%%%%%%%%%%%%%Item:Disjointedness%%%%%%%%%%%%%%%%%%%%%%%%%%%%%

  \item \label{itm:Disjointedness} \hspace{2mm}

    \emph{Claim:} 
    Given  $\mathcal{C} \subset G$ compact there exists an $\epsilon_0>0$ such that for all $\epsilon < \epsilon_0$

    \begin{equation} \label{eqn:disjointedness}
      \gamma \mathcal{H}^1_{\epsilon} \cap  \mathcal{H}_{\epsilon}^1 \cap \Gamma \mathcal{C} = \emptyset,
    \end{equation}
      for all $\gamma \in \Gamma / \Gamma_{N_-} \neq 1$

      \begin{proof}[Proof of Claim] 
        (\ref{eqn:disjointedness}) is equivalent to
        \begin{equation}
          \mathcal{H}_{\epsilon}((p,q)) \cap \mathcal{H}_{\epsilon}^1 \cap \Gamma \mathcal{C} = \emptyset, \quad  \quad \forall (p,q)\neq (0,1) \in \mathcal{Z}
        \end{equation}
        
        Consider an $\alpha \in G$ such that $(p,q)\alpha \in \mathfrak{C}_{\epsilon}$. We can write any such $\alpha$ as 

          \begin{equation}
            \alpha = \mat{1}{b}{0}{1} \mat{y_2^{-1}}{0}{y_1}{y_2}
          \end{equation}
          for $b\in \R$ and $y_1 \in \R$.
          
          Therefore if we assume for the sake of contradiction that $(p,q) \alpha \in \mathfrak{C}_{\epsilon}$ and $(0,1)\alpha \in \mathfrak{C}_{\epsilon}$ we have the following 4 inequalities

          \begin{equation}
            |y_2^{-1}p+(pb+q)y_1| < \epsilon y_2 (pb+q) \label{eqn:ineq 1}
          \end{equation}
          \begin{equation}
            \theta < y_2(pb+q) \le 1 \label{eqn:ineq 2}
          \end{equation}
          \begin{equation}
            |y_1| < \epsilon y_2 \label{eqn:ineq 3}
          \end{equation}
          \begin{equation}
            \theta <y_2 \le 1 \label{eqn:ineq 4}.
          \end{equation}
          Using (\ref{eqn:ineq 1}) and (\ref{eqn:ineq 4}) gives that

          \begin{equation}
            |(pb+q)y_1| < \epsilon
          \end{equation}
          which (plugging back into (\ref{eqn:ineq 4})) gives

          \begin{equation}
            |y_2^{-1}p| <2\epsilon.
          \end{equation}
          Hence 

          \begin{equation}
            |p | < 2 \epsilon.
          \end{equation}
          Thus $p = 0$. Therefore $(0,q) = (0,1)\gamma$ for some $\gamma \in \Gamma$. However since $\Gamma_{\infty} =  \left<  \begin{psmallmatrix} 1 & 1 \\ 0 & 1 \end{psmallmatrix} \right>$, $q =1$. Which is a contradiction proving the statement.
      \end{proof}

  %%%%%%%%%%%%%%%%%%%%%%%%%%%%%%Item:Applying%%%%%%%%%%%%%%%%%%%%%%%%%%%%%%%%%%%

  \item \label{itm:Applying} \hspace{2mm} The claim implies that for $\mathcal{C} \subset G$ compact there is an $\epsilon_0>0$ such that for all $\epsilon< \epsilon_0$ such that

    \begin{equation} \label{eqn:dis-union}
      \mathcal{H}_{\epsilon} \cap \Gamma \mathcal{C} = \bigcup_{\gamma \in \Gamma / \Gamma_{N_-}} (\gamma \mathcal{H}_{\epsilon}^1 \cap \Gamma \mathcal{C})
    \end{equation}
    is a disjoint union. Thus let $\chi_{\epsilon}$ and $\chi^1_{\epsilon}$ denote the characteristic functions of $\mathcal{H}_{\epsilon}$ and $\mathcal{H}_{\epsilon}^1$ respectively, then

    \begin{equation}
      \chi_{\epsilon}(\alpha) = \sum_{\gamma \in \Gamma_{N_-} \backslash \Gamma} \chi_{\epsilon}^1(\gamma \alpha)
    \end{equation}
    for all $\alpha \in \Gamma \mathcal{C}$. Moreover all of the sets we consider have boundary of $BR$-measure $0$. Set $\tilde{\chi}_{\epsilon}(\alpha) := \chi_{\epsilon}(\transpose{\alpha}^{-1})$ and note that $\chi_{\epsilon}(n_+(x) \Phi^{t}) = \tilde{\chi}_{\epsilon}(n_-(-x) \Phi^{-t})$ is the characteristic function for $\mathcal{F}_Q^{\epsilon}$.

    Therefore we write

    \begin{align}\label{eqn:F-Q-epsilon integral}
      \begin{aligned}
      \int_{\mathcal{F}_Q^{\epsilon} / \Z} f(x, n_-(x) \Phi^{-t})dx &= \int_{\T}f(x,n_-(x)\Phi^{-t})\chi_{\epsilon}(n_+(-x)\Phi^{t})dx\\
      &= \int_{\T} \tilde{f}(x,n_+(-x)\Phi^{t})\chi_{\epsilon}(n_+(-x)\Phi^{t})dx,
      \end{aligned}
    \end{align}
    to which we can apply Theorem \ref{thm:equi x dependent} giving:

    \begin{equation} 
      \lim_{t\to \infty} e^{(1-\delta_{\Gamma})t} \int_{\mathcal{F}_Q^{\epsilon} / \Z} f(x, n_-(x) \Phi^{-t})dx =  \frac{1}{|m^{BMS}|} \int_{\Gamma \backslash G \times \T}\tilde{f}(x,\alpha) \chi_{\epsilon}(\alpha) dm^{BR}(\alpha) d\mu_{N_+}^{PS}(x).
    \end{equation}

    Which we write this

    \begin{align}
      \begin{aligned}\label{eqn:F-Q-epsilon equi}
       &=  \frac{1}{|m^{BMS}|} \int_{\Gamma_{N_-} \backslash G \times \T}\tilde{f}(x,\alpha) \chi^1_{\epsilon}(\alpha) dm^{BR}(\alpha) d\mu_{N_+}^{PS}(x), \\
       &=  \frac{1}{|m^{BMS}|} \int_{\Gamma_{N_-} \backslash N_-\{M_{\vect{y}}:\vect{y} \in \mathfrak{C}_{\epsilon}\} \times \T}\tilde{f}(x,\alpha) dm^{BR}(\alpha) d\mu_{N_+}^{PS}(x). 
       \end{aligned}
    \end{align}

  %%%%%%%%%%%%%%%%%%%%%%%%%%%%%%Item:Calculation%%%%%%%%%%%%%%%%%%%%%%%%%%%%%%%%

  \item \label{itm:Calculation} \hspace{2mm} 
    
    Using Proposition \prettyref{prop:Iwasawa} we write (\ref{eqn:F-Q-epsilon equi}) as (noting that $(0,1)n_- =(0,1)$)

    \begin{equation}
      = \frac{2}{|m^{BMS}|} \int_{\T \times \{\vect{y} \in \mathfrak{C}_{\epsilon}\} \times \T} y_2^{2\delta_{\Gamma}-2}\tilde{f}(x,n_-(w)M_{\vect{y}})  dy_2 dy_1 d\mu^{PS}(w) d\mu_{N_+}^{PS}(x). 
    \end{equation}
    Which we can write
    \begin{equation}      
      = \frac{2}{|m^{BMS}|} \int_{\T \times \T}\int_{\theta}^1\int_{\mathcal{B}_{\epsilon y_2}(0)} y_2^{2\delta_{\Gamma}-2}\tilde{f}(x,n_-(w)M_{\vect{y}})  dy_2 dy_1 d\mu^{PS}(w) d\mu_{N_+}^{PS}(x). 
    \end{equation}

    Next we write $D(y_2) := \mat{y_2^{-1}}{0}{0}{y_2}$ and note

    \begin{equation}
      d(M_{\vect{y}},D(y_2)) = d(n_+(y_2^{-1}y_1), Id) \le \epsilon
    \end{equation}
    for $\vect{y} \in \mathfrak{C}_{\epsilon}$ (this is the same calculation as {\cite[(3.42)]{Marklof2010}}). Therefore

    \begin{align}
      \begin{aligned} \label{eqn:F-Q-epsilon equi 3}
      &\left| \mbox{(\ref{eqn:F-Q-epsilon equi})} - \frac{2}{|m^{BMS}|} \int_{\T \times \T}\int_{\theta}^1\int_{\mathcal{B}(\epsilon y_2)} \tilde{f}(x, n_-(w)D(y_2))y_2^{2\delta_{\Gamma}-2}  dy_2 dy_1 d\mu^{PS}(w) d\mu_{N_+}^{PS}(x)\right|\\
      & \qquad =\left| \mbox{(\ref{eqn:F-Q-epsilon equi})} - \frac{4\epsilon}{|m^{BMS}|} \int_{\T \times \T}\int_{\theta}^1 \tilde{f}(x, n_-(w)D(y_2))y_2^{2\delta_{\Gamma}-1}  dy_2 d\mu^{PS}(w) d\mu_{N_+}^{PS}(x)\right| \\
      & \qquad \le \frac{4\varrho\epsilon|\mu^{PS}|^2}{|m^{BMS}|} \int_{\theta}^1 y_2^{2\delta_{\Gamma}-1}dy_2.
    \end{aligned}
  \end{align}

  Evaluating this integral then gives that (\ref{eqn:F-Q-epsilon equi 3}) is equal to

  \begin{equation}
    \frac{2\epsilon\varrho|\mu^{PS}|^2}{|m^{BMS}|\delta_{\Gamma}} (1-\theta^{2\delta}).
  \end{equation}

  Turning now to the right term in the modulus on the left hand side (\ref{eqn:F-Q-epsilon equi 3}) note

  \begin{equation}
    \frac{4\epsilon}{|m^{BMS}|} \int_{\T \times \T}\int_{\theta}^1 \tilde{f}(x,n_-(w)D(y_2))  y^{2\delta_{\Gamma}-1} dy_2 d\mu^{PS}(w)d\mu^{PS}_{N_{+}}(x)
  \end{equation}

  Performing a final change of variables and writing $y_2 = e^{r/2}$, we conclude that

  \begin{multline}\label{eqn:F-Q-epsilon-equi-conc}
    \left| \lim_{t\to \infty}e^{(1-\delta_{\Gamma})t}\int_{\mathcal{F}_Q^{\epsilon} / \Z} f(x, n_-(x) \Phi^{-t})dx\right.\\
    \left.- \frac{2\epsilon}{|m^{BMS}|}\int_{\T \times \T}\int_0^{2|\ln\theta|}\tilde{f}(x,n_-(w)\Phi^{-t})e^{\delta_{\Gamma}r}dr d\mu^{PS}(h) d\mu_{N_+}^{PS}(x)\right|\\    <\frac{2 \varrho\epsilon |\mu^{PS}|^2}{|m^{BMS}|\delta_{\Gamma}}(1-\theta^{2\delta_{\Gamma}}).
  \end{multline}

  %%%%%%%%%%%%%%%%%%%%%%%%%%%%%% Item:Conclusio%%%%%%%%%%%%%%%%%%%%%%%%%%%%%%%%%

  \item \label{itm:Conclusion} \hspace{2mm} 

    To conclude consider
    
    \begin{equation}
      \lim_{t\to \infty} e^{-\delta_{\Gamma}t } \sum_{r \in \mathcal{F}_{Q,\theta}} f(r, n_-(r) \Phi^{-t})
    \end{equation}
    taking the asymptotic formula (\ref{eqn:F_Q asymptotic}) and using  a volume estimate together with uniform continuity (see {\cite[(3.49)]{Marklof2010}} for details) we can write this as

    \begin{equation}
      = \lim_{\epsilon \to 0}\lim_{t\to \infty}  \frac{e^{(1-\delta_{\Gamma})t}}{e^{t}} \frac{e^{t}}{2\epsilon}\sum_{r\in \mathcal{F}_{\theta, Q}}\int_{|x-r|< \epsilon e^{-t}} f(x,n_-(x)\Phi^{-t}) dx.
    \end{equation}
     Which is equal

    \begin{equation}
      = \lim_{\epsilon\to 0}\lim_{t\to \infty} \frac{e^{(1-\delta_{\Gamma})t}}{2\epsilon} \sum_{r\in \mathcal{F}_{\theta, Q}}\int_{|x-r|< \epsilon e^{-t}} f(x,n_-(x)\Phi^{-t}) dx
    \end{equation}
    Then using the disjoint union in (\ref{eqn:dis-union}) we can say

    \begin{equation}
      = \lim_{\epsilon\to 0}\lim_{t\to \infty} \frac{e^{(1-\delta_{\Gamma})t}}{2\epsilon}  \int_{\mathcal{F}_{Q}^{\epsilon}\backslash \Z} f(x,n_-(x)\Phi^{-t}) dx
    \end{equation}
    and using (\ref{eqn:F-Q-epsilon-equi-conc}) we thus conclude after taking $\epsilon \to 0$ (and therefore $\varrho \to 0$) this is equal

    \begin{equation}
      =\frac{1}{|m^{BMS}|}\int_{\T\times \T}\int_0^{2|\ln\theta|}\tilde{f}(x,n_-(w)\Phi^{-r})e^{\delta_{\Gamma}r}drd\mu^{PS}(w) d\mu_{N_+}^{PS}(x)
    \end{equation}
    Taking the limit as $\theta \to 0$ is then possible as

    \begin{equation}
      \limsup_{t\to \infty} \frac{|\mathcal{F}_Q \backslash \mathcal{F}_{Q\theta}|}{e^{\delta_{\Gamma}t}} = \theta c_{\Gamma}
    \end{equation}

\end{steps}

\end{proof}

  \section{Local Statistics} \label{s:Local Statistics}

Theorem \ref{thm:Sheared} and Theorem \ref{thm:main theorem} can also be used to study the local statistics of $\cF_Q$ when viewed as a point process on $[0,1]$ (note once more we are assuming for notation, that $\Gamma^{\infty}$ is periodic on $[0,1]$).

\subsection{Statement} \label{ss:Statement Gap}

For $Q = e^{t/2}$. Let $\mathscr{A} \subset \R$ be bounded interval and set $\mathscr{A}_t = \mathscr{A}e^{-t}$. For a bounded $\mathcal{D} \subset \T$, define

\begin{equation}
  P_Q(\mathcal{D},\mathscr{A},k) = \frac{e^{t} \vol(\{x \in \mathcal{D} : |x +\mathscr{A}_t + \Z \cap \mathcal{F}_Q| = k\})}{\mu_{N_+}^{PS}(\mathcal{D})e^{\delta_{\Gamma}t}}
\end{equation}
and 

\begin{equation}
  P_{0,Q}(\mathcal{D},\mathscr{A},k) = \frac{ |\{r \in \mathcal{F}_Q \cap \mathcal{D} : |r +\mathscr{A}_t + \Z \cap \mathcal{F}_Q| = k\})}{\mu_{N_+}^{PS}(\mathcal{D})e^{\delta_{\Gamma}t}}
\end{equation}

%%%%%%%%%%%%%%%%%%%%%%%%%%%%%%%%%%%%%%%%%%%%%%%%%%%%%%%%%%%%%%%%%%%%%%%%%%%%%%%%
%------------------------THEOREM:Gap Statistics---------------------------------
%%%%%%%%%%%%%%%%%%%%%%%%%%%%%%%%%%%%%%%%%%%%%%%%%%%%%%%%%%%%%%%%%%%%%%%%%%%%%%%%

\begin{theorem} \label{thm:gap distribution}
  Given an interval $\mathscr{A}\subset \R$ and $\mathcal{D} \subset \T$ then for all $k>0$

  \begin{equation} \label{eqn:first stats}
    \lim_{Q \to \infty} P_Q(k,\mathcal{D},\mathscr{A}) = P(k,\mathscr{A})
  \end{equation}

  \begin{equation} \label{eqn:second stats}
    \lim_{Q \to \infty} P_{0,Q}(\mathcal{D},\mathscr{A},k) = P_0(k,\mathscr{A})
  \end{equation}
  where $P(k,\mathscr{A})$ and $P_0(k,\mathscr{A})$ are given explicitly.
\end{theorem}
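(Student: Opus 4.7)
The plan is to reduce both local statistics to counts of primitive points in a fixed compact target set, and then apply the counting theorems of Section~\ref{s:Horospherical Equidistribution} together with the equidistribution statement of Theorem~\ref{thm:main theorem}. The key geometric observation is that, with
\[
\mathfrak{C}_{\mathscr{A}} := \{(y_1,y_2)\in\R^2 : y_1/y_2\in\mathscr{A},\ 0<y_2<1\},
\]
a bounded set with Lebesgue-null boundary (since $\mathscr{A}$ is a bounded interval), for every $r\in\T$ and all $t$ large enough that $|\mathscr{A}_t|<1$ one has
\[
|(r+\mathscr{A}_t+\Z)\cap\mathcal{F}_Q| = |\mathcal{Z}\,n_+(-r)\Phi^t\cap\mathfrak{C}_{\mathscr{A}}|.
\]
This identity uses the $\Z$-invariance of $\Gamma^\infty$ (inherited from $n_-(1)\in\Gamma_\infty$): each Farey point in $(r+\mathscr{A}_t+\Z)\cap\mathcal{F}_Q$ lifts uniquely to a primitive pair $(p,q)\in\mathcal{Z}$ with $p-qr\in q\mathscr{A}_t$, and in the shear coordinates $(Q(p-qr),q/Q)$ this is precisely the condition that the image lies in $\mathfrak{C}_{\mathscr{A}}$.

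Given this reduction, the statement \eqref{eqn:first stats} follows from Theorem~\ref{thm:Sheared}. I would take $\lambda$ to be normalized Lebesgue on $\mathcal{D}$, so that $C_\lambda = \mu^{PS}_{N_+}(\mathcal{D})/\mathrm{vol}(\mathcal{D})$; the factor $\mathrm{vol}(\mathcal{D})$ then converts the probability into the volume in the numerator of $P_Q$, while the factor $\mu^{PS}_{N_+}(\mathcal{D})$ from $C_\lambda$ cancels the normalizer in the denominator, yielding
\[
P(k,\mathscr{A}) = \frac{1}{|\BMS|}\,\BR\bigl(\{\alpha\in\Gamma\backslash G : |\mathcal{Z}\alpha\cap\mathfrak{C}_{\mathscr{A}}|=k\}\bigr).
\]

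For \eqref{eqn:second stats} I would feed a related indicator into Theorem~\ref{thm:main theorem}. A short matrix computation gives $n_+(-r)\Phi^t = \transpose{(n_-(r)\Phi^{-t})}^{-1}$, so setting $F(\alpha):=\chi_{\{|\mathcal{Z}\alpha\cap\mathfrak{C}_{\mathscr{A}}|=k\}}$ and $f(x,\alpha):=\chi_{\mathcal{D}}(x)F(\transpose{\alpha}^{-1})$ makes $\tilde f(x,\alpha)=\chi_{\mathcal{D}}(x)F(\alpha)$, so that
\[
\sum_{r\in\mathcal{F}_Q}f(r,n_-(r)\Phi^{-t}) = \#\bigl\{r\in\mathcal{F}_Q\cap\mathcal{D}: |(r+\mathscr{A}_t+\Z)\cap\mathcal{F}_Q|=k\bigr\}.
\]
Applying Theorem~\ref{thm:main theorem} with $\sigma=0$, the $\chi_{\mathcal{D}}$ factor integrates against $\mu^{PS}_{N_+}$ to produce exactly the $\mu^{PS}_{N_+}(\mathcal{D})$ that cancels the denominator of $P_{0,Q}$, leaving
\[
P_0(k,\mathscr{A}) = \frac{1}{|\BMS|}\int_{\T}\int_0^{\infty}\chi_{\{|\mathcal{Z}\,n_-(w)\Phi^{-r}\cap\mathfrak{C}_{\mathscr{A}}|=k\}}\,e^{\delta_\Gamma r}\,dr\,d\mu^{PS}(w).
\]

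The main technical obstacle is that both applications involve characteristic functions whereas the cited theorems are stated for bounded continuous test functions. For Theorem~\ref{thm:Sheared} this passes through Corollary~\ref{cor:equi char funcs} together with the observation (already used in the proof sketch of Theorem~\ref{thm:Sheared}) that $\partial\{\alpha:|\mathcal{Z}\alpha\cap\mathfrak{C}_{\mathscr{A}}|=k\}$ has null $\BR$-measure. For Theorem~\ref{thm:main theorem} a standard sandwich argument is required: slightly inner/outer thicken $\mathfrak{C}_{\mathscr{A}}$ to obtain continuous $f_\pm$, pass to the limit in $t$, and then send the thickness to zero, using that the relevant PS measures do not charge the level-set boundary. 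What genuinely requires attention is convergence of the $r$-integral for $P_0$: for $k\ge 1$ only those $w$ for which some $(p,q)\in\mathcal{Z}$ satisfies $|q+pw|=O(e^{-r/2})$ contribute, and Corollary~\ref{cor:GMF cor} together with $\delta_\Gamma>1/2$ (which holds because $\Gamma$ has a cusp) yields just enough decay on $\mu^{PS}$ near parabolic points to tame the $e^{\delta_\Gamma r}$ growth and to verify the finite-volume support hypothesis of Theorem~\ref{thm:main theorem}. Establishing this tail estimate is where the bulk of the work will lie.
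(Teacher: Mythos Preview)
Your proposal is correct and follows essentially the same route as the paper: the same target set $\mathfrak{C}(\mathscr{A})$, the same reduction of the counting condition to $\mathcal{Z}\,n_+(\pm r)\Phi^t\cap\mathfrak{C}(\mathscr{A})$, application of Theorem~\ref{thm:Sheared} for \eqref{eqn:first stats}, and application of Theorem~\ref{thm:main theorem} (via the transpose-inverse identity) for \eqref{eqn:second stats}. The only substantive difference is in how finiteness of the limits is justified: you propose to control the $r$-tail of $P_0$ directly using Corollary~\ref{cor:GMF cor}, whereas the paper simply invokes \cite[Proposition~4.3]{Lutsko2018}, which already establishes that $\BR(\mathcal{S}_k)<\infty$ for $k\ge 1$ and that the same estimate bounds the section integral; so the ``bulk of the work'' you anticipate is in fact outsourced to that reference, and the paper's own proof is correspondingly short.
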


\begin{remark}
  In particular \eqref{eqn:second stats} implies that the limiting gap distribution exists everywhere.
\end{remark}

\begin{remark}
  Note that the above theorem is restricted to $k>0$. The reason for this is that the scaling in $P_Q$ and $P_{0,Q}$ is incorrect for the case $k=0$. For geometrically finite subgroups the boundary points cluster close together in far apart cluster. This phenomenon was noticed by Zhang \cite{Zhang2017} and again in \cite{Lutsko2018}.

\end{remark}

To give another qualitative example, we have graphed the gap distribution for $\wh{\Gamma}^{\infty}$ in Figure \ref{fig:gap}. 

%%%%%%%%%%%%%%%%%%%%%%%%%%%%%%%%%%%%%%%%%%%%%%%%%%%%%%%%%%%%%%%%%%%%%%%%%%%%%%%%
%------------------------FIGURE: gap----------------------------------------
%%%%%%%%%%%%%%%%%%%%%%%%%%%%%%%%%%%%%%%%%%%%%%%%%%%%%%%%%%%%%%%%%%%%%%%%%%%%%%%%

\begin{figure}[ht!]
  \begin{center}    
    \includegraphics[width=0.9\textwidth]{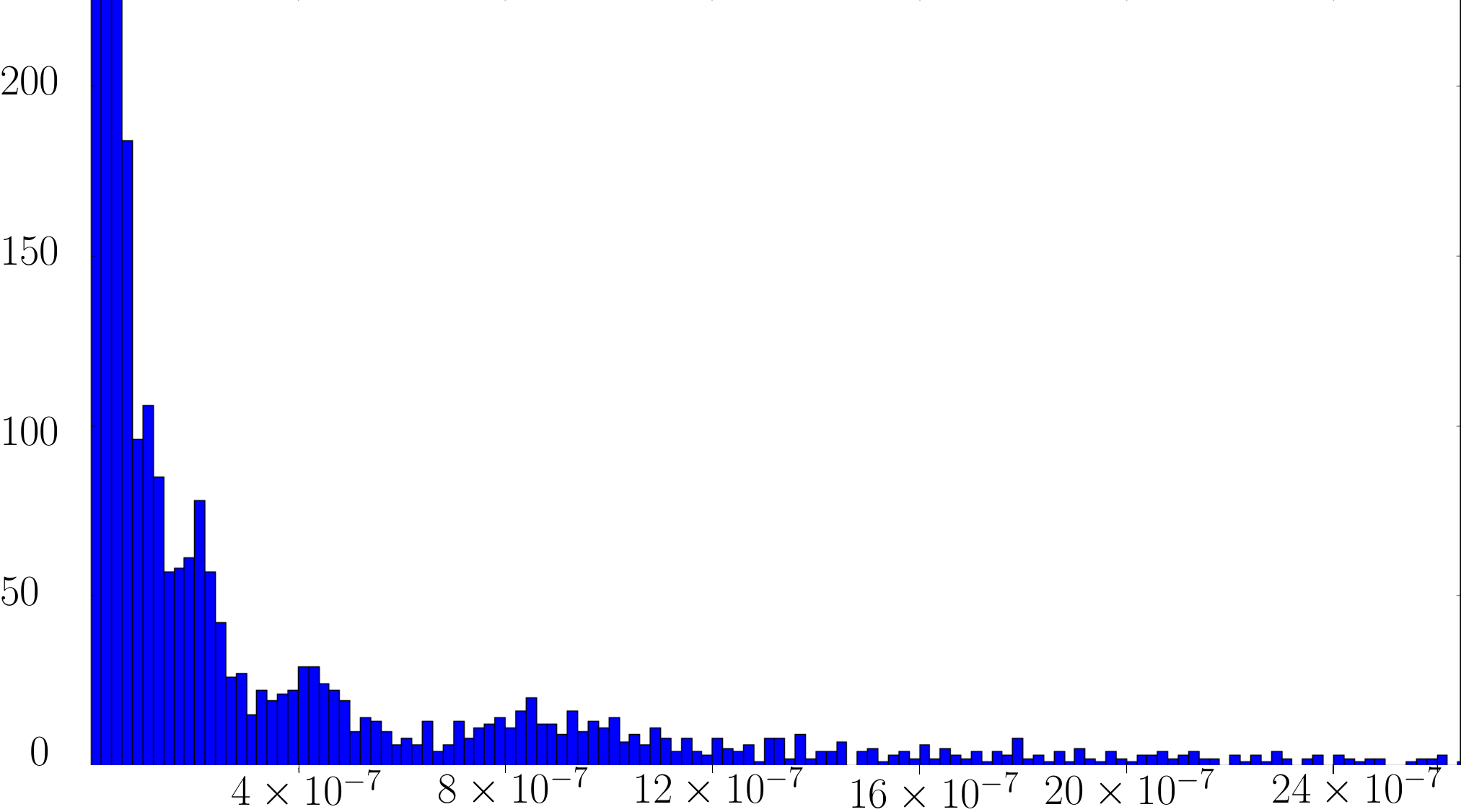}
  \end{center}
  \caption{%
     {\tt Above we have shown the gaps in the point set $\wh{\Gamma}^{\infty}$. The point set is exactly the one shown in \prettyref{fig:example}. We have cut off the image at $240$ (thus the first three bars do not have the same height) and the bin size here is $4\times 10^{-8}$. Hence the bars represent the number of gaps lying in a particular bin. 
       }
   }%
  
  \label{fig:gap}

\end{figure}

\subsection{Proof}

\begin{proof}[Proof of \prettyref{thm:gap distribution}]

Theorem \ref{thm:gap distribution} is a straightforward consequence of Theorem \ref{thm:Sheared} and Theorem \ref{thm:main theorem}. We begin by addressing (\ref{eqn:first stats}), define
  \begin{equation}
    \mathfrak{C}(\mathscr{A}) := \{ (x,y) \in \R \times (0,1] : x\in \mathscr{A} y \} \subset \R^2
  \end{equation}
  and note that
  \begin{equation}
    \frac{p}{q} \in x + \mathscr{A}_t \quad , \quad 0 < q \le Q
  \end{equation}
  is equivalent to
  \begin{equation}
    \Longleftrightarrow (p,q) n_+(\vect{x}) \Phi^t \in \mathfrak{C}(\mathscr{A}).
  \end{equation}

  Therefore for a given $x \in \mathcal{D}$
  \begin{equation}
    P_Q(\mathcal{D},\mathscr{A},k) = \frac{e^{(1-\delta_{\Gamma})t}}{\mu_{N_+}^{PS}(\mathcal{D})} \vol(\{ x \in \mathcal{D} : \left| \mathcal{Z} n_+(x) \Phi^t \cap \mathfrak{C}(\mathscr{A})\right| = k \}).
  \end{equation}
  Applying Theorem \ref{thm:Sheared} then implies
  \begin{equation}
    P(k,\mathscr{A}) = \frac{1}{|m^{BMS}|} m^{BR}(\mathcal{S}_k). \label{eqn:P limit}
    \end{equation}
    where $\mathcal{S}_k = \{ \alpha \in \Gamma \backslash G : |\mathcal{Z} \alpha \cap \mathfrak{C}(\mathscr{A})|=k\}$. 

    Turning now to (\ref{eqn:second stats}). Write
    \begin{align}
      \begin{aligned}
        P_{0} (\mathscr{A},k) &=\lim_{t\to \infty} \frac{\left|\{ r \in \mathcal{F}_Q \cap \mathcal{D}: | \mathcal{Z}n_+(r)\Phi^t \cap \mathfrak{C}(\mathscr{A})| = k \}\right|}{e^{\delta_{\Gamma}t} \mu_{N_+}^{PS}(\mathcal{D})}\\
        &=  \lim_{t\to \infty} \frac{\sum_{r \in \mathcal{F}_Q}\chi_{\mathcal{S}_k}(r,n_+(r)\Phi^t)}{\mu_{N_+}^{PS}(\mathcal{D})e^{\delta_{\Gamma} t}}.
      \end{aligned}
    \end{align}
    Applying \prettyref{thm:main theorem} (after extending it to characteristic functions as done in \cite{Lutsko2018}) gives

    \begin{equation} \label{eqn:P_0 limit}
      P_0(\mathscr{A},k) = \frac{1}{|m^{BMS}|}\int_{\T \times [0,\infty)} \tilde{\chi}_{\mathcal{S}_k}(n_-(w) \Phi^{-r}) e^{\delta_{\Gamma}r}dr d\mu^{PS}(w).
    \end{equation}
    
    Note that the quantity in (\ref{eqn:P limit}) is finite for $k >0$. This was proven in {\cite[Proposition 4.3]{Lutsko2018}}. This does not hold for $k=0$ and is the reason for that restriction in the Theorem. The integral on the right hand side of (\ref{eqn:P_0 limit}) is finite whenever the Burger-Roblin measure is finite. Hence the same {\cite[Proposition 4.3]{Lutsko2018}} implies finiteness of \eqref{eqn:P_0 limit} as well.

\end{proof}

  \section{Explicit Gap Distribution for $\wh{\Gamma}$}
 \label{sec:An Explicit Example}

 We now return to the example, $\wh{\Gamma}$, discussed in Section \ref{sec:Introduction}. First note that Theorem \ref{thm:gap distribution} implies that, in the limit $T \to \infty$, the gap distribution in \eqref{F0 gap} exists for all $s>0$. Our goal is to prove the following Theorem which gives a far more explicit formula for the limiting gap distribution:

%%%%%%%%%%%%%%%%%%%%%%%%%%%%%%%%%%%%%%%%%%%%%%%%%%%%%%%%%%%%%%%%%%%%%%%%%%%%%%%%
%----------------THEOREM: Gamma_0 ----------------------------------------------
%%%%%%%%%%%%%%%%%%%%%%%%%%%%%%%%%%%%%%%%%%%%%%%%%%%%%%%%%%%%%%%%%%%%%%%%%%%%%%%%

\begin{theorem} \label{thm:Gamma_0}
  For $s < s_0 = 7.5$, and $\cI$ a closed interval in $[0,1]$, the limiting gap distribution can be written

  \begin{equation}
    \lim_{T\to \infty} \wh{F}_{T,\cI}(s) =: \wh{F}_{\cI}(s) = F^{1,2}_{\cI}(s) + F^{2,3}_{\cI}(s)
  \end{equation}
  where $F^{1,2}_{\cI}(s)$ and $F^{2,3}_{\cI}(s)$ are explicit integrals over compact regions with respect to a fractal measure (see \eqref{Fij}).

\end{theorem}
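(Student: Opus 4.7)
The plan is to start from the abstract formula supplied by Theorem \ref{thm:gap distribution} and then use the very concrete geometry of the sparse Ford configuration $\mathcal{K}$ in Section \ref{ss:Circle Packing Example} to replace $\Gamma\backslash G$ by an explicit parameter domain. Concretely, a gap in $\wh{\cF}_Q$ of size $\le s/T$ at a point $p/q$ corresponds to a pair of circles of $\mathcal{K}$ tangent to $\cC_0$ with no intervening tangency of diameter $\ge 1/T$; by Theorem \ref{thm:gap distribution} applied with $\mathscr{A}=[0,s]$ and an appropriate $k$, the limiting gap distribution $\wh{F}_{\cI}(s)$ can be written as $|m^{BMS}|^{-1}\int_{\cI\times[0,\infty)} \tilde\chi_{\mathcal{S}_k}(n_-(w)\Phi^{-r})\,e^{\delta_{\wh\Gamma}r}\,dr\,d\mu^{PS}(w)$, where $\mathcal{S}_k$ is the set of cosets $\alpha \in \wh{\Gamma}\backslash G$ with prescribed number of primitive points of $\mathcal{Z}$ in $\mathfrak{C}([0,s])$.

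Next I would translate the condition defining $\mathcal{S}_k$ into the circle-packing picture. The key observation is that two tangencies at $p/q$ and $p'/q'$ are adjacent in $\wh{\cF}_Q$ precisely when the reduced configuration of circles above the base line, scaled so that one sits at $0$, lies in a specific compact region determined by the group element $\gamma\in\wh\Gamma$ carrying one tangency to the other. Using the generators of $\wh\Gamma$ from \eqref{Gamma_0 def} and the description of $\mathcal{K}$ as inversions across vertical lines at $0,4$ and the circles $C(0,1), C(4,1)$, one checks that for sufficiently small $s$ only two combinatorial types of $\gamma$ can realise a gap; these are indexed by the number of inversions (roughly $1$--$2$ and $2$--$3$ "bounces" around the initial configuration), giving rise to the two pieces $F^{1,2}_\cI$ and $F^{2,3}_\cI$.

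To pin down the threshold $s_0=7.5$ I would argue that any third combinatorial type requires at least three consecutive circle inversions, and a direct geometric estimate (using that the smallest circles of $\mathcal{C}_2,\mathcal{C}_3$ have diameter $1$, so iterates shrink at a controlled rate $\ge 2$ per inversion through $C(0,1)$ or $C(4,1)$) forces the corresponding gap to exceed $7.5$. For $s<s_0$ this excludes the third type and leaves only the two word families. Then each $\mathcal{S}_k$ becomes a compact domain in $\wh\Gamma\backslash G$ that can be parameterised by $N_-\{M_{\vect y}\}$ as in Proposition \ref{prop:Iwasawa}; applying that proposition converts the Burger--Roblin integral into an integral $\int y_2^{2\delta_{\wh\Gamma}-2}\,dy_2\,dy_1\,d\mu^{PS}(x)$ over the explicit compact region cut out by the above combinatorial constraints, which is precisely the shape of \eqref{Fij}.

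The main obstacle is the combinatorial classification leading to the threshold $s_0=7.5$: one must show rigorously that the only relevant reduced words $\gamma$ (up to $\wh\Gamma_\infty$) producing gaps below $s_0$ are the ones encoded by $F^{1,2}$ and $F^{2,3}$, and that no more exotic configurations (deep compositions of inversions, or gaps straddling a cusp of a sub-configuration) can sneak in. Once this finite list is secured, the rest is a mechanical translation of Theorem \ref{thm:gap distribution} through Proposition \ref{prop:Iwasawa} and of the Busemann and PS weights into the explicit integrand.
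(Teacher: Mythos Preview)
Your route differs substantially from the paper's, and as written it has real gaps.

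The paper does \emph{not} deduce Theorem \ref{thm:Gamma_0} from the abstract Theorem \ref{thm:gap distribution}. Instead it works at finite $T$ throughout Section \ref{sec:An Explicit Example}: first Proposition \ref{prop:unique gp ele} shows (via the continued-fraction recursion for the denominators $d_n$, not a heuristic ``shrinking rate $\ge 2$'' estimate) that any adjacent pair with scaled gap $<7.5$ is the $\gamma$-image of a pair from $\mathcal{K}_0$, yielding the finite-$T$ splitting \eqref{splitting}. Then Lemma \ref{lem:circle action} and Proposition \ref{prop:Omega ij} translate ``$\gamma(\cC_i),\gamma(\cC_j)$ are neighbours at level $T$ with gap $\le s/T$'' into the explicit condition $(c_\gamma,d_\gamma)\in\sqrt{T}\,\Omega_1^{i,j}(s)$ in the $(c,d)$-plane. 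Finally the limit $T\to\infty$ is taken using the Bourgain--Kontorovich--Sarnak bisector count (Theorem \ref{thm:bisector counting}) via Proposition \ref{prop:nice subset}, which produces the polar-coordinate integral \eqref{Fij} against the conformal density $\nu_i$.

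Two concrete problems with your plan. First, Proposition \ref{prop:Iwasawa} gives a $N_-AN_+$-type disintegration in coordinates $(x,y_1,y_2)$ with weight $y_2^{2\delta-2}\,dy_2\,dy_1\,d\mu^{PS}(x)$; this is \emph{not} the shape of \eqref{Fij}, which is an integral over $\theta\in K$ against $d\nu_i(\theta)$ coming from the $N_-AK$ Iwasawa decomposition and sector counting. Applying Proposition \ref{prop:Iwasawa} to the Burger--Roblin side of Theorem \ref{thm:gap distribution} would not land you on \eqref{Fij} without reinventing the bisector machinery. Second, the gap distribution $\wh{F}_{T,\cI}(s)$ is not literally one of the statistics $P_{0,Q}(\cD,\mathscr{A},k)$; it is the CDF of nearest-neighbour spacings, whose complement is governed by the $k=0$ case, which Theorem \ref{thm:gap distribution} explicitly excludes. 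You would need an additional argument (e.g.\ summing over $k\ge 1$ or a separate void-probability analysis) even to get a well-defined starting point. The threshold $s_0=7.5$ also requires the specific inequality $h_4 g_3\ge (4-\tfrac{2}{3})(\tfrac{3}{2})^2=7.5$ from the proof of Proposition \ref{prop:unique gp ele}; your informal bound does not produce this number.
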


 The proof follows the methodology of \cite{RudnickZhang2017}, however there are significant differences. The plan is to break up the gap distribution into a sum over pairs of circles in the initial configuration $\mathcal{K}_0$. Then, using the following lemma (of Rudnick and Zhang) we can express each term in this sum as an integral over a compact area.

%%%%%%%%%%%%%%%%%%%%%%%%%%%%%%%%%%%%%%%%%%%%%%%%%%%%%%%%%%%%%%%%%%%%%%%%%%%%%%%%
%------------------------LEMMA: circle action-----------------------------------
%%%%%%%%%%%%%%%%%%%%%%%%%%%%%%%%%%%%%%%%%%%%%%%%%%%%%%%%%%%%%%%%%%%%%%%%%%%%%%%%

\begin{lemma}[{\cite[Lemma 3.5]{RudnickZhang2017}}]\label{lem:circle action}
  Let $M = \smallmat{a}{b}{c}{d} \in \SL(2,\R)$.

  \renewcommand{\labelenumi}{\emph{(\roman{enumi})}}
  \begin{enumerate}
      \item If $c \neq 0$ then under the M\"{o}bius transform $M$, a circle $C(x+yi,y)$ is mapped to
        \begin{equation}
          C\left(\frac{ax+b}{cx+d}+ \frac{y i}{(cx+d)^2},\frac{y}{(cx+d)^2}\right)
        \end{equation}
        if $cx+d \neq 0$, and to the line $\Im \vect{z} = 1/2c^2y$ if $cx+d=0$. When $c=0$, the image circle is
        \begin{equation}
          C\left(\frac{ax+b}{d},\frac{y}{d^2}\right).
        \end{equation}
      \item If $c \neq 0$ then the line $C = \R+yi$ is mapped to
        \begin{equation}
          C\left(\frac{a}{c}+\frac{1}{2c^2y}i, \frac{1}{2c^2y}\right),
        \end{equation}
        and to the line $\R + a^2y i$ if $c =0$.
  \end{enumerate}
  \renewcommand{\labelenumi}{\arabic{enumi}.}

\end{lemma}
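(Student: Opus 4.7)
The lemma is a direct computation about the action of $\SL(2,\R)$ on generalized circles tangent to $\R$. Two classical facts drive the approach: Möbius transformations preserve the family of generalized circles (circles together with lines) in $\hat{\mathbb C} = \mathbb C \cup \{\infty\}$, and matrices in $\SL(2,\R)$ preserve $\hat{\R} = \R \cup \{\infty\}$ setwise, hence preserve tangencies with $\hat{\R}$. Every curve appearing in the lemma --- a circle $C(x+yi,y)$ or a horizontal line $\R + yi$ --- is tangent to $\hat{\R}$: the circle at $x$, the line at $\infty$. Consequently its image is a generalized circle tangent to $\hat{\R}$ at $M$ of the original point of tangency, and the only remaining task is to pin down the radius (or, for a line, the height).

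The cleanest computational device is the observation that $C(x+yi,y)$ coincides with the locus $\Im\bigl(1/(z-x)\bigr) = -1/(2y)$. A short calculation using $ad-bc=1$ yields
$$
\frac{1}{M(z) - M(x)} \;=\; \frac{(cz+d)(cx+d)}{z-x} \;=\; \frac{(cx+d)^2}{z-x} + c(cx+d),
$$
so that $\Im\bigl(1/(M(z)-M(x))\bigr) = (cx+d)^2\,\Im\bigl(1/(z-x)\bigr)$ whenever $cx+d\ne 0$ and $x\in\R$. Substituting the defining equation of the horocircle identifies the image as the horocircle tangent to $\R$ at $M(x)$ of radius $y/(cx+d)^2$, settling the generic subcase of (i).

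The remaining cases are short, direct calculations handled in the same spirit. If $c \ne 0$ but $cx+d=0$ then $x=-d/c$ maps to $\infty$, so the image is horizontal; using $ad-bc=1$ one finds $ax+b = -1/c$, whence $M(x+2yi) = a/c + i/(2c^2 y)$, giving the line $\Im w = 1/(2c^2 y)$. If $c=0$ then $M$ is affine with $ad=1$ and the image circle reads off by inspection as $C\bigl((ax+b)/d,\,y/d^2\bigr)$. Part (ii) proceeds identically: $\R + yi$ is tangent to $\hat{\R}$ at $\infty$, so for $c\ne 0$ the image is tangent to $\R$ at $M(\infty)=a/c$, and evaluating $M(yi)$ followed by rationalising pins down the radius at $1/(2c^2y)$; for $c=0$ the affine map sends $\R+yi$ to $\R + (a/d)y\,i = \R + a^2 y\,i$ via $ad=1$. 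The only obstacle is bookkeeping --- one must deploy $ad-bc=1$ at precisely the right moment so that $(cx+d)^2$ emerges cleanly in the denominator --- but there is no conceptual difficulty.
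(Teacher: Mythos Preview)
Your proof is correct; the horocycle identity $\Im\bigl(1/(z-x)\bigr)=-1/(2y)$ together with the factorisation
\[
\frac{1}{M(z)-M(x)}=\frac{(cx+d)^2}{z-x}+c(cx+d)
\]
is an efficient way to extract the new radius, and your handling of the degenerate cases ($cx+d=0$, $c=0$, and the horizontal line) is accurate.

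There is nothing to compare against: the paper does not supply a proof of this lemma, but simply quotes it from \cite[Lemma~3.5]{RudnickZhang2017} and proceeds to use it. Your argument therefore fills in more detail than the paper itself offers.
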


 \subsection{Breaking the Gap Distribution Up}
 \label{ss:Breaking the Gap Distribution Up}

  In \cite{RudnickZhang2017} a fundamental observation is that at a given level $T$, the two circles corresponding to neighboring tangencies can be mapped by exactly one or two group elements to a pair in the initial configuration. That is not true here, however the following proposition states that this is the case in the interval $[0,s_0)$.

  \begin{proposition} \label{prop:unique gp ele}
    For any $T$ and $\mathcal{I}$, suppose $\cC$ and $\cC'$ are the circles tangent to $\cC_0$ at $x^j_{T,\mathcal{I}}$ and $x^{j+1}_{T,\mathcal{I}}$. If $T(x^{j+1}_{T,\mathcal{I}}  - x^j_{T,\mathcal{I}}) \le s$ for $s < s_0$ then there exists a $\gamma \in \Gamma$ such that $\cC = \gamma \cC_l$ and $\cC' = \gamma \cC_m$ for $\cC_l \neq \cC_m \in \cK_0$ and neither equal $\cC_0$. Moreover if $\cC$ and $\cC'$ are not tangent then $\gamma$ is unique and if they are tangent then there exist exactly two such $\gamma$.

  \end{proposition}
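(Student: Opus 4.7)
The plan is to reduce the problem to an enumeration of tangent pairs in the initial configuration $\mathcal{K}_0$ by normalizing with an element of $\wh{\Gamma}$. Every circle tangent to $\cC_0$ in $\mathcal{K}=\wh{\Gamma}\mathcal{K}_0$ has the form $\gamma\cC_l$ for some $l\in\{1,2,3\}$ and $\gamma\in\wh{\Gamma}$, because $\wh{\Gamma}$ preserves $\cC_0=\R$. Given $(\cC,\cC')$, choose any decomposition $\cC=\gamma\cC_l$ and apply $\gamma^{-1}$: the pair becomes $(\cC_l,\tilde{\cC})$ with $\tilde{\cC}:=\gamma^{-1}\cC'\in\mathcal{K}$, and the claim reduces to showing that $\gamma$ and $l$ can be chosen so that $\tilde{\cC}\in\mathcal{K}_0\setminus\{\cC_0\}$.

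I would next use Lemma \ref{lem:circle action} to track how the tangency $x^{j+1}$ and the diameter of $\cC'$ transform. Writing $\gamma$ with lower row $(c,d)$ and, for concreteness, $l=2$ (so $p_l=0$), one has $\mathrm{diam}(\cC)=1/d^2$ and $|(\gamma^{-1})'(x^j)|=d^2$. A first-order expansion at $x^j$ combined with the hypotheses $T(x^{j+1}-x^j)\le s$ and $\mathrm{diam}(\cC)\ge 1/T$ yields $|\tilde{x}|\le s$ for the new tangency $\tilde{x}$ of $\tilde{\cC}$, while the diameter rescales by $d^2$ and the consecutive property of $(\cC,\cC')$ at level $T$ transfers to a consecutive property of $(\cC_l,\tilde{\cC})$ at the effective level $\tilde{T}=T/d^2$ (with $\tilde{T}\tilde{x}\le s$). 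The cases $l=1$ (where $\cC_1$ is horizontal and $p_l=\infty$) and $l=3$ are handled identically after conjugating by $T_4$ or $S$.

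With the normalization in place, the core of the argument is an enumeration. For each choice of $l$, I would list the tangencies of $\mathcal{K}$ near $p_l\in\{\infty,0,4\}$, ordered by diameter, and identify which can be the consecutive tangency to $\cC_l$ at a given effective level. Using the description $\wh{\Gamma}^\infty=\cQ_4$ together with its translates under $\wh{\Gamma}_\infty$, low-depth continued fractions give the largest candidate circles: besides $\cC_3$ at $4$ and $\cC_1$ at $\infty$ (the members of $\mathcal{K}_0$), the next candidates near $p_2=0$ have tangency at $\pm 1/4$ with diameter $1/16$, then $\pm 1/8$ with diameter $1/64$, and so on. For each such candidate one verifies that either (i) the candidate lies at normalized distance $>s_0$ from $p_l$, or (ii) the pair $(\cC_l,\text{candidate})$ is itself the image under a different $\gamma'\in\wh{\Gamma}$ of a pair in $\mathcal{K}_0$ (for instance, the pair $(\cC_2,\text{circle at }1/4)$ equals $ST_{-4}(\cC_1,\cC_2)$). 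The self-similarity of $\mathcal{K}$ at each cusp controls the tail of the enumeration, and a direct finite computation pins down $s_0=7.5$ as the first threshold beyond which a candidate not covered by (i) or (ii) appears.

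For the uniqueness/multiplicity claim, suppose $(\cC,\cC')=\gamma(\cC_l,\cC_m)=\gamma'(\cC_{l'},\cC_{m'})$. Then $\eta:=\gamma^{-1}\gamma'\in\wh{\Gamma}$ maps the pair $(\cC_{l'},\cC_{m'})$ to $(\cC_l,\cC_m)$ inside $\mathcal{K}_0$. Using that $\wh{\Gamma}=\langle T_4\rangle * \langle S\rangle$ with $S^2=1$, one checks directly: no nontrivial element of $\wh{\Gamma}$ swaps $\cC_2$ and $\cC_3$ (any $\eta$ with $\eta\cdot 0=4, \eta\cdot 4=0$ forces non-integer matrix entries), so $\gamma$ is unique for the non-tangent case; whereas the involution $S$ fixes $i$ and swaps $\cC_1\leftrightarrow\cC_2$, giving exactly one nontrivial $\eta$ for the tangent pair $(\cC_1,\cC_2)$, and similarly $T_4ST_{-4}$ works for $(\cC_1,\cC_3)$. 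The main obstacle I anticipate is the enumeration step: handling all ways a candidate non-$\mathcal{K}_0$ tangency can be re-normalized via an alternative $\gamma$ to land in $\mathcal{K}_0$, and extracting the precise threshold $s_0=7.5$ from this bookkeeping.
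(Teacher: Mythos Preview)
Your normalization idea --- map one circle to $\cK_0$ and then enumerate candidates for the other --- is natural, but it diverges from the paper's argument and, as written, has two genuine gaps.

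First, the ``effective level $\tilde T = T/d^2$'' claim is not correct. A M\"obius map rescales the diameter of a circle tangent at $x$ by $|(\gamma^{-1})'(x)|$, and this factor varies with $x$. After applying $\gamma^{-1}$ there is no single threshold at which $(\cC_l,\tilde\cC)$ is consecutive: every intermediate circle that could break adjacency has been rescaled by a different amount. Your first-order bound $|\tilde x|\le s$ also ignores the denominator in the exact formula $\tilde x = d^2\epsilon/(1-cd\epsilon)$ (with $\epsilon = x^{j+1}-x^j$), and you have no a priori control on $c$.

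Second, and more seriously, the enumeration step is where all the content lies, and your plan for it is circular. Your case~(ii) says: if the candidate $\tilde\cC$ is not in $\cK_0$, check whether $(\cC_l,\tilde\cC)$ is itself the image under some $\gamma'$ of a pair in $\cK_0$. But that is precisely the statement being proved. Without an induction on some well-founded quantity (generation, word length, diameter) this does not terminate, and you give no mechanism by which the specific value $7.5$ emerges.

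The paper sidesteps both issues by working directly with the \emph{scaled gap} $\min(h,h')^{-1}|x-x'|$, which needs no normalization. It introduces the notion of a \emph{rectangle}: a parent circle together with two adjacent children and $\cC_0$. A pair $(\cC,\cC')$ maps to $\cK_0\setminus\{\cC_0\}$ under a single $\gamma$ exactly when it lies in a rectangle, and uniqueness/multiplicity is then immediate. The substance of the proof is the bound for pairs \emph{not} in a rectangle: using the continued-fraction recursion $d_n=a_n d_{n-1}+d_{n-2}$ one tracks the ``innermost'' descendants on either side of a common ancestor and shows their scaled gaps satisfy $h_{i+1}^{-1}g_i \ge 7.5$ for $i\ge 3$ via a short explicit chain of inequalities. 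The rectangle structure is the organizing idea your proposal is missing; once it is in place, the enumeration becomes finite and the threshold $7.5$ drops out of a direct computation rather than open-ended bookkeeping.
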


  \begin{remark}
    The reason we consider $s<s_0$ in Theorem \ref{thm:Gamma_0} is that Proposition \ref{prop:unique gp ele} fails for larger $s$. In words, for larger $s$ some of the gaps considered are not the image of a pair in the initial configuration. To get around this, one could consider a larger initial configuration (i.e consider $\cK$ together with the circles tangent at $1/4$ and $4-1/4$). This would allow Proposition \ref{prop:unique gp ele} to hold for slightly larger $s_0$. Therefore as one considered larger and larger gaps, one would need to consider larger and larger initial configurations and more and more terms in the decomposition below. In this paper we will stick to the case $s_0=7.5$ as it will simplify the following proofs.
  \end{remark}

  For ease of notation, we restrict our attention to circles tangent to $\cC_0$ in $[0,2]$ (i.e beneath $\cC_2$) and adopt the following notation shown in Figure \ref{fig:notation}: first label $\cC_2= \cC^{0}$ and

      \begin{itemize}
        \item The tangencies are labelled by their continued fraction expansions $\alpha_{k_1,\dots,k_i}^{(i)} = [0;4k_1,\dots 4k_i]$.   
        \item The associated circles are labelled $\mathcal{C}^{(i)}_{k_1,\dots,k_i}$. 
        \item The diameter of each circle is similarly labelled $h^{(i)}_{k_1,\dots,k_i}$.
      \end{itemize}
      Thus, each circle $\mathcal{C}^{(i)}_{k_1,\dots,k_i}$ is the \emph{child} of the circle $\mathcal{C}^{(i-1)}_{k_1,\dots,k_{i-1}}$ (to which it is tangent) and the \emph{parent} of $\Z_{\neq 0}$ children -  $\mathcal{C}^{(i+1)}_{k_1,\dots,k_i,k_{i+1}}$ (to which it is also tangent).

      %%%%%%%%%%%%%%%%%%%%%%%%%%%%%%%%%%%%%%%%%%%%%%%%%%%%%%%%%%%%%%%%%%%%%%%%%%
      % ------------------------FIGURE: notation--------------------------------
      %%%%%%%%%%%%%%%%%%%%%%%%%%%%%%%%%%%%%%%%%%%%%%%%%%%%%%%%%%%%%%%%%%%%%%%%%%

      \begin{figure}[ht!]
          \begin{center}    
            \includegraphics[width=1.0\textwidth]{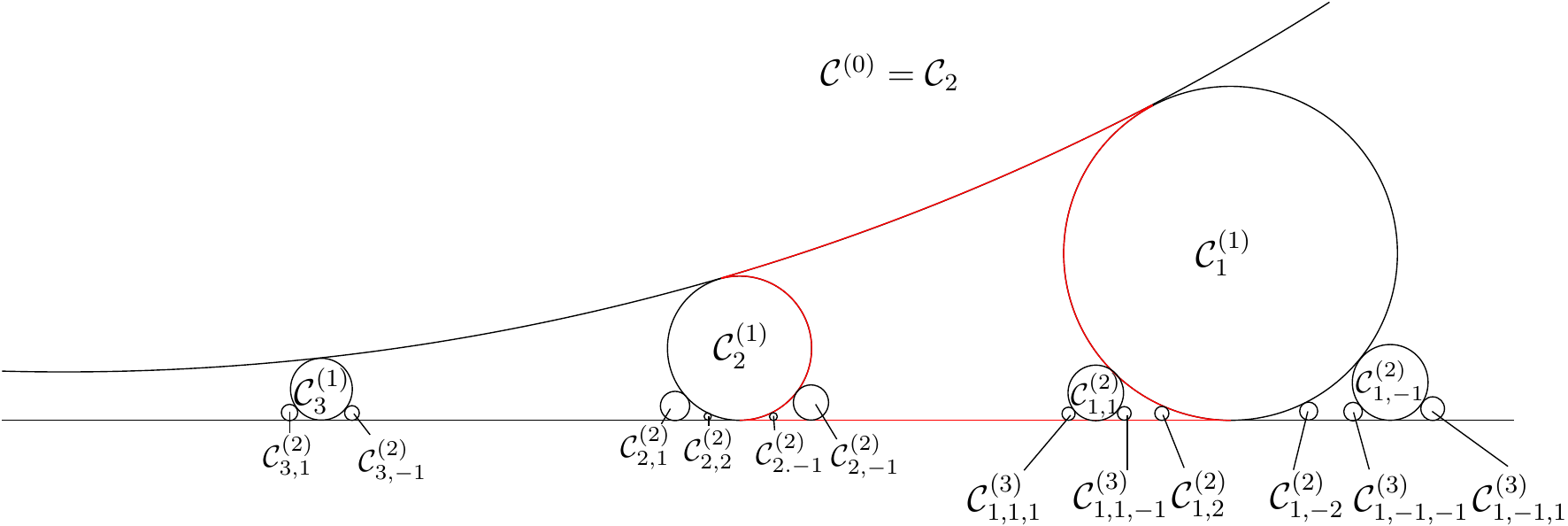}
          \end{center}
          \caption{%
            {\tt The labelling used in this section. For clarity, we only show a portion of the interval and a few circles in $\cK$. The red section is what we call the rectangle $(\cC_1^{(1)}, \cC_2^{(1)}, \cC^{(0)}, \cC_0)$. 
            }
          }%
          
          \label{fig:notation}
       
      \end{figure}

      Define a rectangle to be any collection of circles
      
      \begin{equation}
        \cR = ( \cC^{(i)}_{k_1,\dots,k_{i-1},k_i} , \cC^{(i)}_{k_1,\dots,k_{i-1},k_i\pm 1} , \cC^{(i-1)}_{k_1,\dots,k_{i-1}} , \cC_0 ) \quad , \quad (k_i \neq 0)
      \end{equation}
      where $k_i \pm 1 \neq 0$ (see for example the rectangle in Figure \ref{fig:notation}). A rectangle is thus a pair of neighbors in a generation, the shared parent and the real line. Let $\cR_0$ denote the rectangle $(\cC_0,\cC_1,\cC_2,\cC_3)$ of the initial configuration. The following simple observation is the basis of the proof of Proposition \ref{prop:unique gp ele}.

%%%%%%%%%%%%%%%%%%%%%%%%%%%%%%%%%%%%%%%%%%%%%%%%%%%%%%%%%%%%%%%%%%%%%%%%%%%%%%%%
%-----------------------FACT:unique rectangle-----------------------------------
%%%%%%%%%%%%%%%%%%%%%%%%%%%%%%%%%%%%%%%%%%%%%%%%%%%%%%%%%%%%%%%%%%%%%%%%%%%%%%%%

  \begin{fact} \label{lem:unique rectangle}
    For any rectangle $\mathcal{R}$ there exists a unique $\gamma \in \wh{\Gamma}$
    \begin{equation}
      \cR = \gamma \mathcal{R}_0.
    \end{equation}
  \end{fact}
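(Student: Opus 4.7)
The plan is to treat existence and uniqueness separately.

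For \textbf{existence}, I would construct $\gamma$ explicitly from the continued-fraction data. The generators $T, S$ of $\wh{\Gamma}$ act on the Ford packing in a transparent way: $T^n$ translates $\cC_0$-tangent circles horizontally by $4n$, while $S$ interchanges the two horocycles based at $\infty$ and $0$. A word of the form $\gamma = \sigma_1 \cdots \sigma_i$ with each $\sigma_j$ of type $S T^{\pm k_j}$ (the signs dictated by the requirement that the resulting tangency point equal $[0; 4k_1, \ldots, 4k_j]$) descends the tree one level per factor. Repeated application of Lemma \ref{lem:circle action} verifies at each step that this $\gamma$ sends $(\cC_1, \cC_2, \cC_3)$ to $(\cC^{(i-1)}_{k_1,\ldots,k_{i-1}}, \cC^{(i)}_{k_1,\ldots,k_i}, \cC^{(i)}_{k_1,\ldots,k_{i-1}, k_i \pm 1})$; the condition $\gamma \cC_0 = \cC_0$ is automatic since $\wh{\Gamma} \subset \PSL(2,\R)$ preserves the real line.

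For \textbf{uniqueness}, it suffices to prove that the stabilizer of $\cR_0$ in $\wh{\Gamma}$ is trivial, because $\gamma_1 \cR_0 = \gamma_2 \cR_0$ then forces $\gamma_1^{-1} \gamma_2 = e$. Let $\gamma$ stabilize $\cR_0$. Since $\gamma \cC_0 = \cC_0$ automatically, $\gamma$ permutes $\{\cC_1, \cC_2, \cC_3\}$; and as these are horocycles based at the three distinct cusps $\infty, 0, 4$ respectively, $\gamma$ also permutes this cusp triple. The identity permutation fixes three distinct boundary points, forcing $\gamma = e$ by the uniqueness of a M\"obius transformation determined by its values at three points. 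For each of the five non-trivial permutations I would write down the unique M\"obius transformation realizing it on $\{\infty, 0, 4\}$ and observe that the resulting matrix, after rescaling to determinant $\pm 1$, either has determinant $-1$ (hence lies outside $\PSL(2,\R)$) or has entries that cannot be made integer by any such rescaling. Since $\wh{\Gamma} = \langle T, S \rangle \subset \PSL(2,\Z)$ consists of integer matrices of determinant $1$, all five non-trivial permutations are ruled out.

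The main obstacle is bookkeeping in the existence step: ensuring that the signs $\pm k_j$ are chosen consistently so that $\gamma$ lands on the correct tangency, and---more subtly---that the \emph{same} $\gamma$ simultaneously carries all three circles $\cC_1, \cC_2, \cC_3$ to their prescribed targets rather than just the primary one $\cC_2$. This is controlled by the fact that the neighbor and parent relations in the tree are intrinsic (preserved under the $\wh{\Gamma}$-action), so matching the image of $\cC_2$ forces the images of $\cC_1$ and $\cC_3$ up to the stabilizer of $\cC_2$, and the latter ambiguity is eliminated precisely by the uniqueness already established.
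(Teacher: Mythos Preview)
Your proposal is correct and considerably more detailed than the paper's own treatment, which dispatches the fact in two sentences: existence because $\cK = \wh{\Gamma}\cK_0$ and M\"obius transformations preserve tangencies (so a rectangle, being determined by its tangency pattern, must be the image of $\cR_0$), and uniqueness ``as we are working in $\PSL(2,\Z)$'' (i.e.\ a M\"obius transformation is determined by its values at three boundary points, and passing to $\PSL$ has already killed $\pm I$).

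The difference is one of explicitness. For existence, the paper argues abstractly from the orbit definition, while you build $\gamma$ as a word in the generators tracking the continued-fraction address; your construction is more informative but also carries more bookkeeping, as you acknowledge. For uniqueness, you treat the rectangle as an unordered set and therefore check all six permutations of the cusp triple $\{\infty,0,4\}$, verifying that the five non-trivial ones are realised by M\"obius maps outside $\PSL(2,\Z)$; the paper implicitly treats $\cR_0$ as ordered (or at least pins down three distinguished boundary points), whence the stabiliser is trivially trivial. Your version is the safer one given that the paper's ordering conventions for $\cR_0$ versus a general $\cR$ do not quite line up, and the permutation check is a genuine (if routine) computation.
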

  The configuration $\cK = \Gamma \cK_0$ where $\cK_0$ is the initial configuration. Since circle inversions send circles to circles preserving tangencies there must be a $\gamma \in \wh{\Gamma}$ sending $\cR_0$ to $\cR$. Moreover the uniqueness follows as we are working in $\PSL(2,\Z)$.

\begin{proof}[Proof of Proposition \ref{prop:unique gp ele}]

  In this proof, given two circles with tangencies $\alpha_1$ and $\alpha_2$ and diameters $h_1$ and $h_2$ we refer to $\abs{\alpha_1 - \alpha_2}$ as the gap associated to them and to $\min\{h_1, h_2\}^{-1}\abs{\alpha_1 - \alpha_2}$ as the \emph{scaled gap} associated to them. Note that if a scaled gap is larger than $s_0$, then the gap \emph{will never} contribute to $\wh{F}_{T,\cI}(s)$ for any $T$. Thus that gap can be ignored. Fact \ref{lem:unique rectangle} implies that Proposition \ref{prop:unique gp ele} follows if we show that all scaled gaps associated to pairs of circles \emph{not in} rectangles are larger than $s_0$.

\begin{steps}[leftmargin=0cm,itemindent=.5cm,labelwidth=\itemindent,labelsep=0cm,align=left]

    \item \label{subgaps} \hspace{2mm} 
      The scaled gap associated to a pair of \emph{non-tangent} circles \emph{in a rectangle} has the form
     \begin{equation}\label{this gap}
       \min\{h^{(i)}_{k_1,\dots , k_i},h^{(i)}_{k_1,\dots , k_i \pm 1}\}^{-1} \left| \alpha^{(i)}_{k_1,\dots, k_i} - \alpha^{(i)}_{k_1,\dots, k_i \pm 1} \right|
     \end{equation}
     (again we assume $k_i \pm 1 \neq 0$). 

   \item \label{im:cont fracs} \hspace{2mm} We now use some theory of continued fractions to show that \eqref{this gap} is bounded above $4$. Therefore the gap arising from non-tangent pairs \emph{in} a rectangle is bounded above $4$. Given a tangency $\alpha^{(i)}_{k_1,\dots,k_i} = [0;a_1,\dots a_i]$, let 

       \begin{equation}
         \frac{b_n}{d_n} := [0;a_1,\dots, a_n]
       \end{equation}
       for $n < i$ where $b_n$ and $d_n$ share no common factors. It is a classical exercise to show (see \cite{Khinchin2003}):

       \begin{align}
         b_n &= a_n b_{n-1} +b_{n-2} , \quad &&b_{-2}=0, \quad &&&b_{-1}=1\\
         d_n &= a_n d_{n-1} +d_{n-2} , \quad &&d_{-2}=1, \quad &&&d_{-1}=0\label{denominator}
       \end{align}
       and

       \begin{equation}
         d_n b_{n-1} - d_{n-1}b_n = (-1)^n.
       \end{equation}
       Hence we can write:

       \begin{align}
         \begin{aligned}
          \min\{h^{(i)}_{k_1,\dots , k_i},h^{(i)}_{k_1,\dots , k_i \pm 1}\}^{-1} &\left| \alpha^{(i)}_{k_1,\dots, k_i} - \alpha^{(i)}_{k_1,\dots, k_i \pm 1} \right| \\
          &= \min\{d_{i}',d_i\}^{2} \left | [1;a_1,\dots a_i] - [1;a_1,\dots a_i\pm 4]  \right|\\
          &= \min\{d_{i}',d_i\}^{2}\left| \frac{a_i b_{i-1} + b_{i-2}}{a_i d_{i-1} + d_{i-2}} - \frac{(a_i \pm 4) b_{i-1} + b_{i-2}}{(a_i\pm 4) d_{i-1} + d_{i-2}}  \right|\\
          & = \min\{d_{i}',d_i\}^{2}  \frac{4}{d_{i}d_i'} \ge 4,
                     \end{aligned}
       \end{align}
       where $b_i$ and $d_i$ are respectively the numerator and denominator of $\alpha^{(i)}_{k_1,\dots,k_i}$ and $b^{\prime}_i$ and $d^{\prime}_i$ are the numerator and denominator of $\alpha^{(i)}_{k_1,\dots, k_i\pm 1}$ (and similarly for all $d_j$ and $b_j$).

     \item \label{im:gaps get smaller} \hspace{2mm} 
       Suppose $\cC^{(i)}_{m_1,\dots , m_i} = \cD_1$ and $\cC_{n_1,\dots,n_j}^{(j)} = \cD_2$ are adjacent at time $T$ and do not both belong to a rectangle. For notation we assume $\alpha^{(i)}_{m_1,\dots m_i} < \alpha^{(j)}_{n_1,\dots n_j}$.
       \begin{itemize}
         \item By construction there is a shared ancestor of $\cD_1$ and $\cD_2$, $\cC_{m_1,\dots, m_k}^{(k)} = \cB_1$ (for $k < \min\{ i,j\}$). That is $m_x = n_x$ for all $1\le x \le k$
         \item At the $k+1$-st generation $\cD_1$ is the decendent of $\cC_{m_1,\dots , m_{k+1}} = \cB_3$ and $\cD_2$ is the decendent of $\cC_{n_1,\dots , n_{k+1}}^{(k+1)} = \cB_2$ (see Figure \ref{fig:gaps get smaller}) and $(\cB_1,\cB_2,\cB_3,\cC_0)$ must form a rectangle (otherwise $\cD_1$ and $\cD_2$ are clearly not adjacent at any times).
         \item Lastly it is evident that $\cD_1$ must be the \emph{right-most} decendent of $\cB_3$ of its generation. Thus $\abs{m_l} =1 $ for all $l > k+1$. Moreover $\cD_2$ must be the \emph{left-most} decendent of $\cB_2$ in its generation.
       \end{itemize}
       Motivated by these three geometric facts we adopt the following notation (see Figure \ref{fig:gaps get smaller}). In each generation $l$, we label the left-most decendent of $\cB_2$ by $\cB_{2(l-k)}$. Moreover we label the right-most decendent of $\cB_{3}$ by $\cB_{2(l-k)+1}$. With that notation, all non-tangent adjacent pairs of circles at a given time are of the form $\cB_{x}$, $\cB_{x+1}$ for some $x$.

       % Now we show that the scaled gap associated to any non-rectangular pair is larger than $s_0$. Consider two \emph{non-tangent} circles $\cD_1$ and $\cD_2$, adjacent at time $T$ such that $\cD_1$ is to the right of $\cD_2$ and they do not belong to a rectangle. Let $\cB_1$ be the smallest shared ancestor of the pair. It is evident that in the generation after $\cB_1$ the ancestors of $\cD_1$ and $\cD_2$ (say $\cB_2$ and $\cB_3$ respectively) form a rectangle (see Figure \ref{fig:gaps get smaller}). 
       % If $\cD_1 \neq \cB_2$ then, out of all the decendents of $\cB_2$ it must be the left-most member of its generation. And likewise $\cD_2$ must be the right-most member. In other words, there is a rectangle labelled as in Figure \ref{fig:gaps get smaller} such that $\cD_1 = \cB_j$ for $j$ even, and $\cD_2 = \cB_k$ for $k$ odd. Our goal is now to show that for any pair $\cB_i$ and $\cB_j$ in a general rectangle, labelled as in Figure \ref{fig:gaps get smaller} with $i, j \neq 2$, the scaled gap associated to the pair is larger than $s_0$.

       %%%%%%%%%%%%%%%%%%%%%%%%%%%%%%%%%%%%%%%%%%%%%%%%%%%%%%%%%%%%%%%%%%%%%%%%%
       % ------------------------FIGURE: gaps get smaller-----------------------
       %%%%%%%%%%%%%%%%%%%%%%%%%%%%%%%%%%%%%%%%%%%%%%%%%%%%%%%%%%%%%%%%%%%%%%%%%

      \begin{figure}[ht!]
        \begin{center}    
          \includegraphics[width=0.9\textwidth]{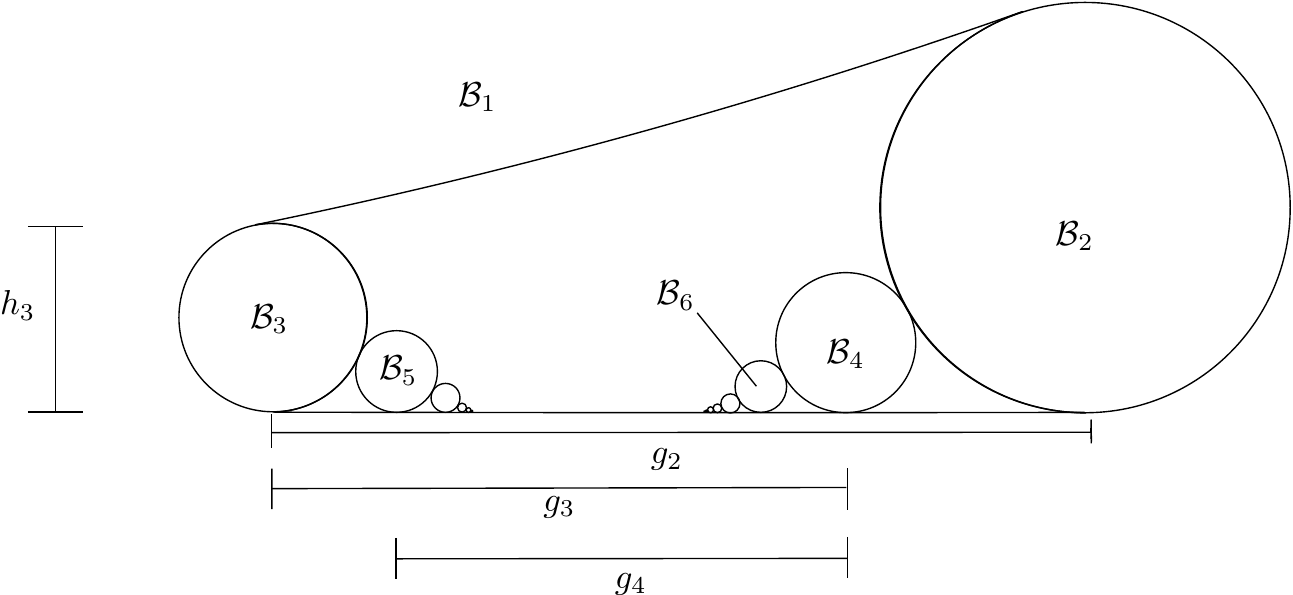}
        \end{center}
        \caption{%
          {\tt Above we show the relevant rectangle, circles and labelling for \ref{im:gaps get smaller}. We are only concerned with the 'innermost circles' in the rectangle. The circles are labelled in decreasing order of size.
          }
        }%
        
        \label{fig:gaps get smaller}

      \end{figure}

      Label the tangency associated to $\cB_i$, $\alpha_i$. Label the diameter of $\cB_i$, $h_i$. We assume (w.l.o.g) $h_1 > h_2 > h_3 = 1$. Label the gap between $\cB_i$ and $\cB_{i+1}$, $g_{i} = |\alpha_{i} - \alpha_{i+1}|$. 

      With this notation, all gaps associated to adjacent (non-tangent) pairs at time $T$ are of the form $g_i$ for $i \ge 2$. We show that $h_{i+1}g_{i}$ (the scaled gap) is larger than $7.5$ for all $i >2$. This will prove the proposition as all gaps associated to non-tangent pairs are of this form.

      Collecting together two facts about $\{\cB_i\}_{i\in \N}$ lead to the bounds. Namely $h_{n+2} \le \frac{h_n}{3^2}$ by \eqref{denominator} and $h_3 \ge \frac{h_2}{4}$. From that, the following sequence of inequalities follow
       \begin{eqnarray}
         h_3 g_2 &\ge& 4 \notag \\
         h_4g_3 &\ge& \left(4-\frac{2}{3}\right)\left(\frac{3}{2}\right)^2 \notag\\
         h_5g_4 &\ge& \left(4- \frac{2}{3}-\frac{1}{3}\right)3^2\\
         h_6g_5 & \ge& \left(4 - \frac{2}{3}-\frac{1}{3}- \frac{2}{9}\right) \left(\frac{9}{2}\right)^2 \notag\\
         h_7g_6 &\ge& \left(4 - \frac{2}{3}-\frac{1}{3}- \frac{2}{9} - \frac{1}{9}\right) 9^2 \notag\\
         \vdots \notag
       \end{eqnarray}
       hence the gap arising from circles which do not form the boundary of a rectangle is at least $7\frac{1}{2}$.

       This proves the proposition with $s_0  =  7\frac{1}{2}$ (this may not be sharp).

\end{steps} 

\end{proof}

Now that we have established this proposition, the argument to prove Theorem \ref{thm:Gamma_0} follows similar lines to Rudnick and Zhang. Note that Proposition \ref{prop:unique gp ele}, implies we can write the gap distribution for $s<s_0$ as

\begin{gather}
  \wh{F}_{T,\cI}(s) = F^{1,2}_{T,\cI}(s) + F^{2,3}_{T,\cI}(s) \label{splitting}\\ 
  F^{i,j}_{T,\cI}(s) := \frac{\# \set{\left.(x_{T,\cI}^l, x_{T,\cI}^{l+1}) \in \Gamma(\alpha_i, \alpha_j) \right| T(x_{T,\cI}^{l+1} - x_{T,\cI}^l) \le s }}{T^{\delta_{\wh{\Gamma}}}}, \label{splitting 2}
\end{gather}
where $\alpha_i$ are the tangencies associated to $\cC_i$ in the initial configuration (the contribution from the tangent pair $(1,3)$ has already been counted from the $(1,2)$ pair because of the overcounting in Proposition \ref{prop:unique gp ele} for gaps associated with tangent pairs).

\subsection{Geometric Description of the Gap Distribution}
\label{ss:Geometric Description}

The Lemma \ref{lem:circle action} and the Proposition \ref{prop:Omega ij} play a crucial role in what follows. As these theorems are taken from \cite{RudnickZhang2017} and are not specific to the subgroup considered, we will not repeat the details here. 

We use Lemma \ref{lem:circle action} to provide conditions under which the image of $\cC_i$ and $\cC_j$ are adjacent at time $T$. Indeed it follows from {\cite[Proposition 4.6]{RudnickZhang2017}} that there exist two regions $\Omega^{1,2}_T$ and $\Omega^{2,3}_T$ such that, for $M = \smallmat{a}{b}{c}{d}$, the image $M(\alpha_i,\alpha_j)$ is an adjacent pair at time $T$ if and only if $(c,d) \in \Omega^{i,j}_T$ (where $(i,j) = (1,2)$ or $(2,3)$).

We define these two regions as subsets of the $cd$-plane $\{(c,d)|c\ge 0\}$:

\renewcommand{\labelenumii}{(\roman{enumii})}
\renewcommand{\labelenumi}{(\alph{enumi})}
\begin{enumerate}
  \item We define $\Omega_T^{1,2}$ to be those $\{(c,d)|c \ge 0\}$ such that
    \begin{gather} \label{Omega 12 1}
      c^2 \le \frac{T}{2} \qquad , \qquad d^2 \le \frac{T}{2}\\
      \label{Omega 12 2}
      (4c+|d|)^2 > \frac{T}{2} 
    \end{gather}

  \item We define $\Omega_T^{2,3}$ to be those $\{(c,d)|c \ge 0\}$ such that
    \begin{gather} \label{Omega 23 1}
      d^2 \le \frac{T}{2} \qquad , \qquad (4c+d)^2 \le \frac{T}{2}. \\
      \label{Omega 23 2}
      \mbox{If } d(4c+d) < 0 \mbox{ then } c^2>\frac{T}{2}.
    \end{gather}
\end{enumerate}
Note that $\Omega_T^{i,j}$ is in both cases a union of convex sets and 

\begin{equation}
  \Omega_T^{i,j} = \sqrt{T}\Omega_1^{i,j}
\end{equation}

Hence we have the following restatement of {\cite[Proposition 4.6]{RudnickZhang2017}} restricted to our context

%%%%%%%%%%%%%%%%%%%%%%%%%%%%%%%%%%%%%%%%%%%%%%%%%%%%%%%%%%%%%%%%%%%%%%%%%%%%%%%%
%-----------------------------------Proposition: Omega ij-----------------------
%%%%%%%%%%%%%%%%%%%%%%%%%%%%%%%%%%%%%%%%%%%%%%%%%%%%%%%%%%%%%%%%%%%%%%%%%%%%%%%%

\begin{proposition}[{\cite[Proposition 4.6]{RudnickZhang2017}}] \label{prop:Omega ij}
  For $\gamma = \smallmat{a_{\gamma}}{b_{\gamma}}{c_{\gamma}}{d_{\gamma}} \in \Gamma$:
  \begin{enumerate}
    \item the circles $\gamma(\cC_1)$ and $\gamma(\cC_2)$ are neighbors in $\cA_T$ if and only if $(c_\gamma,d_\gamma) \in \sqrt{T}\Omega_1^{1,2}$.
    \item the circles $\gamma(\cC_2)$ and $\gamma(\cC_3)$ are neighbors in $\cA_T$ if and only if $(c_\gamma,d_\gamma) \in \sqrt{T}\Omega_1^{2,3}$.
  \end{enumerate}

\end{proposition}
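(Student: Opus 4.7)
Since the proposition is a direct restatement of \cite[Proposition 4.6]{RudnickZhang2017} specialised to the initial configuration $\mathcal{K}_0$ of $\wh{\Gamma}$, my plan is to run their argument in this setting — an explicit computation driven by Lemma \ref{lem:circle action}. For $\gamma = \begin{psmallmatrix} a & b \\ c & d \end{psmallmatrix} \in \wh{\Gamma}$, Lemma \ref{lem:circle action} gives that $\gamma(\cC_1)$, $\gamma(\cC_2)$, and $\gamma(\cC_3)$ are tangent to $\cC_0$ at $a/c$, $b/d$, and $(4a+b)/(4c+d)$, with radii $\tfrac{1}{2c^2}$, $\tfrac{1}{2d^2}$, and $\tfrac{1}{2(4c+d)^2}$ respectively. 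Reading membership in $\cA_T$ as ``radius at least $1/T$'' (so the square of the relevant denominator is at most $T/2$), the first pair of inequalities in each $\Omega^{i,j}_T$ read off directly: for $(1,2)$ we get $c^2, d^2 \le T/2$, and for $(2,3)$ we get $d^2, (4c+d)^2 \le T/2$.

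The remaining inequality in each $\Omega^{i,j}_T$ encodes the requirement that no third tangency of $\cK$ lies strictly between the two candidate tangencies with radius at least $1/T$. By Fact \ref{lem:unique rectangle} every such tangency comes from the $\wh{\Gamma}$-orbit of $\cK_0$, and by the shrinking-diameter estimates used in Step \ref{im:gaps get smaller} of Proposition \ref{prop:unique gp ele} only the closest ``candidate'' circles can possibly have radius above the threshold. Concretely, for case $(1,2)$ the candidate intermediate circle is either $\gamma(\cC_3) = \gamma\,n_-(4)(\cC_2)$ or $\gamma\,n_-(-4)(\cC_2)$, with tangencies $(4a+b)/(4c+d)$ and $(b-4a)/(d-4c)$. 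Using $ad-bc=1$, a short signed calculation comparing these with $a/c$ and $b/d$ shows that (with $c \ge 0$) the first lies in the gap precisely when $d>0$ and the second precisely when $d<0$; in either case the relevant denominator equals $4c + |d|$. Demanding that the candidate fall outside $\cA_T$ therefore yields $(4c+|d|)^2 > T/2$. For case $(2,3)$, the candidate intermediate is $\gamma(\cC_1)$, tangent at $a/c$; the analogous calculation shows $a/c$ lies in the gap precisely when $d(4c+d) < 0$, and excluding $\gamma(\cC_1)$ from $\cA_T$ then reads $c^2 > T/2$. Each defining inequality is quadratic homogeneous in $(c,d)$ versus $T$, so $\Omega^{i,j}_T = \sqrt{T}\,\Omega^{i,j}_1$ is immediate.

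The main obstacle is justifying rigorously that \emph{only} these one or two candidates need to be excluded, rather than every descendant in the packing. I would handle this by extending the rectangle analysis of Section \ref{ss:Breaking the Gap Distribution Up}: any would-be intervening circle lives in a subrectangle of $\gamma(\cR_0)$ or of one of its translates $\gamma\,n_-(4k)(\cR_0)$, $k\in\Z$, and the uniform contraction $h_{n+2} \le h_n/9$ used in Step \ref{im:gaps get smaller} shows that every deeper descendant is strictly below the $1/T$ threshold whenever the named candidate already is. This collapses the infinite ``no-intervention'' condition into the finite inequality appearing in the definition of $\Omega^{i,j}_T$, and completes the proof.
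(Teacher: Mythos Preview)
The paper does not give its own proof here; it simply cites \cite[Proposition~4.6]{RudnickZhang2017} and writes down the resulting regions, so there is no in-house argument to compare against. Your first paragraph (the Lemma~\ref{lem:circle action} computations) is correct and is exactly what Rudnick--Zhang do.

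The obstacle you flag at the end is real, and your proposed fix does not close it. For the tangent pair $(\cC_1,\cC_2)$ you only name candidate intermediates tangent to $\gamma(\cC_1)$ --- namely $\gamma(\cC_3)$ and its mirror --- but the children of $\gamma(\cC_2)$ on the same side (the $\gamma$-images of the packing circles at $\pm 1/4$, whose post-$\gamma$ denominator is $c+4|d|$) are equally eligible and are \emph{not} dominated by your candidate. The contraction $h_{n+2}\le h_n/9$ from Step~\ref{im:gaps get smaller} is irrelevant: that circle lies in a different branch of the tree, not beneath $\gamma(\cC_3)$. Concretely, take $\gamma=\smallmat{1}{0}{4}{1}=ST^{-1}S\in\wh{\Gamma}$ and $T=200$: then $(c_\gamma,d_\gamma)=(4,1)$ satisfies all three defining inequalities of $\Omega_T^{1,2}$, yet the packing circle at $1/8$ (denominator $8$, so $8^2\le T/2$) lies strictly between $\gamma(\cC_2)$ at $0$ and $\gamma(\cC_1)$ at $1/4$. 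Thus the literal ``if and only if'' fails, and no rearrangement of your descent argument can prove it.

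What saves the application is that the proposition is only ever used jointly with the relative-gap condition \eqref{gap cond 12} (resp.\ \eqref{gap cond 23}) for $s<s_0$, and there the missing exclusions are automatic: by AM--GM, $(c+4|d|)^2\ge 16\,c|d|\ge 16T/s>T/2$ whenever $s<32$, and the same bound disposes of every deeper circle. The paper itself observes in Section~\ref{ss:Properties of the Limiting Gap Distribution} that \eqref{Omega 12 2} is redundant once \eqref{gap cond 12} is imposed; the same mechanism handles the candidates you omitted. A clean argument should therefore work directly with $\Omega_T^{i,j}(s)$ and use the gap inequality to exclude \emph{all} intermediate circles at once, rather than trying to establish Proposition~\ref{prop:Omega ij} for the bare regions $\Omega_T^{i,j}$.
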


The relative gap condition in \eqref{splitting 2} can now be written (again following {\cite[(18) - (20)]{RudnickZhang2017}}): 

\begin{enumerate}
  \item For $i=1$ and $j =2$

    \begin{equation} \label{gap cond 12}
      c\abs{d} \ge \frac{T}{s}
    \end{equation}
  \item For $i = 2$ and $j =3$

    \begin{equation} \label{gap cond 23}
      \abs{d(4c+d)} \ge \frac{4T}{s}
    \end{equation}
\end{enumerate}

Thus we come to the same conclusion as Rudnick and Zhang that

\begin{equation}\label{F ij Omega}
  F^{i,j}_{T,\cI}(s) = \frac{1}{T^{\delta_{\wh{\Gamma}}}} \# \set{ \gamma = \smallmat{a_{\gamma}}{b_{\gamma}}{c_{\gamma}}{d_{\gamma}} \in \Gamma \;| \; \gamma\alpha_i, \gamma \alpha_j \in \cI, (c_{\gamma},d_{\gamma}) \in \Omega_T^{i,j}(s)}
\end{equation}
for $(i,j) = (1,2), (2,3)$, where $\Omega_T^{i,j}(s)$ is defined to be those elements $(c,d) \in \Omega_T^{i,j}$ satisfying \eqref{gap cond 12} for $(1,2)$ and \eqref{gap cond 23} for $(2,3)$.

Note that $\Omega^{i,j}_T(s)$ are unions of convex, compact sets, and

\begin{equation} \label{Omega s scaling}
  \Omega^{i,j}_T(s) = \sqrt{T}\Omega_1^{i,j}(s)
\end{equation}

\subsection{Limiting Behaviour}
\label{ss:Limiting Behaviour}

To ease notation and remain consistent with \cite{RudnickZhang2017} we reparameterize the geodesic flow

\begin{equation}
  A := \set{ \smallmat{y^{-\frac{1}{2}}}{0}{0}{y^{\frac{1}{2}}} | y>0}
\end{equation}
and set 
\begin{equation}
  A_T : = \set{ \smallmat{y^{-\frac{1}{2}}}{0}{0}{y^{\frac{1}{2}}} | 0<y<T}.
\end{equation}
Note that this is the \emph{backwards geodesic flow} compared with how we defined it in Section \ref{sec:Background Hyp}. Hence we have the coresponding Iwasawa decomposition $\PSL(2,\R) = N_- A K$ (note that $N_-$ is \emph{an expanding horosphere} for this flow). In which case we have the following  Theorem concerning counting points in the orbits of general discrete subgroups, $\Gamma$ (as in the rest of the paper), in bisectors due to Bourgain, Kontorovich and Sarnak

%%%%%%%%%%%%%%%%%%%%%%%%%%%%%%%%%%%%%%%%%%%%%%%%%%%%%%%%%%%%%%%%%%%%%%%%%%%%%%%%
%--------------------------THEOREM:bisectior counting---------------------------
%%%%%%%%%%%%%%%%%%%%%%%%%%%%%%%%%%%%%%%%%%%%%%%%%%%%%%%%%%%%%%%%%%%%%%%%%%%%%%%%

\begin{theorem}[\cite{BourgainKontorovichSarnak2010}]\label{thm:bisector counting} 

  Consider bounded Borel subsets $\Omega_1 \subset N_-$ and $\Omega_2 \subset K$ such that  $\mu^{PS}(\partial (\Omega_1(X_{i})) = \nu_{i}(\partial(\Omega^{-1}(X_{i}^-))) = 0$, then 

  \begin{equation}\label{bisector}
    \lim_{T \to \infty} \frac{ \# (\Gamma \cap \Omega_1 A_T \Omega_2)}{T^{\delta_{\Gamma}}} = \frac{1}{\delta_{\Gamma}\cdot \abs{\BMS}}\mu^{PS}(\Omega_1(X_{i})) \nu_{i}(\Omega_2^{-1}(X_{i}^-)).
  \end{equation}

\end{theorem}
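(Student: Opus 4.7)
The plan is to derive the bisector count from mixing of the geodesic flow under the Bowen–Margulis–Sullivan measure on $\Gamma \backslash G$, following the Eskin–McMullen thickening strategy as adapted to the infinite covolume setting by Roblin and Oh–Shah. First I would approximate $\chi_{\Omega_1}$ and $\chi_{\Omega_2}$ by continuous bumps on $N_-$ and $K$ with shrinking supports; the boundary vanishing hypotheses on $\Omega_i$ guarantee that this smoothing does not affect the leading asymptotic. Using the decomposition $G = N_- A K$, write each $\gamma$ as $\gamma = n_-(\gamma)\, a(\gamma)\, k(\gamma)$, so that
\begin{equation}
\#(\Gamma \cap \Omega_1 A_T \Omega_2) = \sum_{\gamma \in \Gamma} \chi_{\Omega_1}(n_-(\gamma))\, \chi_{A_T}(a(\gamma))\, \chi_{\Omega_2}(k(\gamma)),
\end{equation}
which I will study by periodizing and integrating against a bump at the identity.

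Second, I would apply a wave-front thickening argument. Pick a small product neighborhood $B_\epsilon = U_\epsilon^- V_\epsilon U_\epsilon^+$ of the identity in $G$ (with $U_\epsilon^\pm \subset N_\pm$ and $V_\epsilon \subset A$) and a bump $\psi_\epsilon$ supported on $B_\epsilon$. Integrate the periodized sum against $\psi_\epsilon\, d\BR$: after $\Gamma$-unfolding one obtains a single integral over $G$ of $\chi_{B_\epsilon \Omega_1 A_T \Omega_2}$ against the conformal density. The local product form
\begin{equation}
d\BR(u) = e^{\delta_\Gamma \beta_{u^-}(i,\pi(u))}\, e^{\beta_{u^+}(i,\pi(u))}\, d\nu_i(u^-)\, dm_i(u^+)\, ds
\end{equation}
factors the integral along $N_- \times A \times N_+$. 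The slice at hyperbolic time $t \in [0,\tfrac{1}{2}\log T]$ contributes a weight $e^{\delta_\Gamma t}$ against the PS measure in the unstable ($N_-$) direction and the Lebesgue density in the stable ($N_+$) direction, exactly as in Proposition \ref{prop:Iwasawa}.

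Third, I would identify the limiting constants. Integrating $e^{\delta_\Gamma t}\, dt$ from $0$ to $\log T$ yields $T^{\delta_\Gamma}/\delta_\Gamma$, producing both the $T^{\delta_\Gamma}$ growth and the factor $1/\delta_\Gamma$. The $N_-$-integration against the PS weight yields $\mu^{PS}(\Omega_1(X_i))$, the $K$-integration against the rotational PS weight yields $\nu_i(\Omega_2^{-1}(X_i^-))$, and the normalization $1/|\BMS|$ enters as the mixing constant from Roblin's theorem. Passing $\epsilon \to 0$ and removing the smoothing recovers \eqref{bisector}.

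The main obstacle is controlling the thickening in the infinite covolume setting: since $\BR$ is not finite on $\Gamma \backslash G$, one must carefully localize the bump $\psi_\epsilon$ and verify uniformity of the exchange of limits as $\epsilon \to 0$. This requires Roblin's mixing theorem for BMS with explicit quantitative control (or, in our two-dimensional situation, a sufficiently quantitative form of the equidistribution result of Oh–Shah used in Theorem \ref{thm:equi x dependent}), which is the technical heart of the Bourgain–Kontorovich–Sarnak argument.
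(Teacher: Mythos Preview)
The paper does not prove Theorem~\ref{thm:bisector counting} at all: it is quoted verbatim as a result of Bourgain--Kontorovich--Sarnak \cite{BourgainKontorovichSarnak2010} and used as a black box (to derive \eqref{eqn:F_Q asymptotic} and Proposition~\ref{prop:nice subset}). So there is no ``paper's own proof'' to compare your proposal against.

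That said, your sketch is broadly in the spirit of how such bisector estimates are actually established in the infinite-covolume literature (Roblin, Oh--Shah, Mohammadi--Oh, and indeed \cite{BourgainKontorovichSarnak2010}): one uses mixing of the geodesic flow with respect to $\BMS$, an Eskin--McMullen style thickening/unfolding, and the conformal product structure to factor the asymptotic into a $\mu^{PS}$-mass on the unstable side, a $\nu_i$-mass on the $K$-side, and the $T^{\delta_\Gamma}/\delta_\Gamma$ coming from the $A$-integration. One small inaccuracy: the thickening bump $\psi_\epsilon$ should be integrated against the \emph{Haar} measure (the unfolding is an orbital sum over $\Gamma$, paired with a test function on $\Gamma\backslash G$); the $\BMS$ and $\BR$ measures enter only after one invokes mixing/equidistribution, not as the measure one integrates the bump against. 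Also, the passage from the $N_+$ (stable) direction in the product neighborhood to the $K$-factor $\nu_i(\Omega_2^{-1}(X_i^-))$ requires an additional change of variables (essentially comparing the $KAN_-$ and $N_+AN_-$ decompositions near the identity), which you have suppressed; this is routine but is where the precise form of the $K$-weight $\nu_i$ appears.
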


This counting theorem then allows us to prove

%%%%%%%%%%%%%%%%%%%%%%%%%%%%%%%%%%%%%%%%%%%%%%%%%%%%%%%%%%%%%%%%%%%%%%%%%%%%%%%%
%-------------------------------PROPOSITION:nice subset-------------------------
%%%%%%%%%%%%%%%%%%%%%%%%%%%%%%%%%%%%%%%%%%%%%%%%%%%%%%%%%%%%%%%%%%%%%%%%%%%%%%%%

\begin{proposition} \label{prop:nice subset}

  Let $\cI$ be an interval, and let $\Omega \subset \set{(c,d) \; | \; c \ge 0}$ be a bounded, convex, compact subset with piecewise smooth boundary. Moreover suppose that in polar coordinates the region $\Omega$ is bounded by two piecewise smooth curves $r_1(\theta) \le r_2(\theta)$ for $\theta \in [\theta_1,\theta_2]$. Then

  \begin{multline}\label{nice subset}
    \# \set{ \left.\gamma = \smallmat{\ast}{\ast}{c_{\gamma}}{d_{\gamma}} \in \Gamma_{\infty} \backslash \Gamma \;\right| \;x(\gamma) \in \cI, \; (c_{\gamma},d_{\gamma}) \in \sqrt{T} \Omega } \\ \sim \frac{T^{\delta_{\Gamma}}}{\delta_{\Gamma} \abs{\BMS}} \mu^{PS}(\mathcal{I}(X_{i})) \int_{\theta_1}^{\theta_2} \left(r_2^{2\delta_{\Gamma}}(\theta) - r_1^{2\delta_{\Gamma}}(\theta)\right) d\nu_{i}(\theta)
  \end{multline}
  as $T \to \infty$, where $d\nu_{i}(\theta) = d\nu_{i}(k(\theta) X_{i})$ and we have written $\gamma$ in $N_-AK$ coordinates as $x(\gamma)a(\gamma)k(\gamma)$.

\end{proposition}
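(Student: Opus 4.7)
The strategy is to reduce the count to the Bourgain--Kontorovich--Sarnak bisector counting theorem (Theorem \ref{thm:bisector counting}) by re-expressing the condition $(c_\gamma, d_\gamma) \in \sqrt{T}\Omega$ in coordinates adapted to the $N_-AK$ decomposition used there. First I would write $\gamma = n_-(x(\gamma))\, a(y(\gamma))\, k(\theta(\gamma))$ and compute directly that $c_\gamma = y(\gamma)^{1/2}\sin\theta(\gamma)$ and $d_\gamma = y(\gamma)^{1/2}\cos\theta(\gamma)$. Thus $(c_\gamma,d_\gamma)$ has polar coordinates $(r,\theta)=(y^{1/2},\theta)$ measured from the $d$-axis and does \emph{not} depend on the $N_-$-component $x(\gamma)$. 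Under the hypothesis that $\Omega$ is described in polar coordinates by $\theta\in[\theta_1,\theta_2]$, $r\in[r_1(\theta),r_2(\theta)]$, the condition $(c_\gamma,d_\gamma)\in\sqrt{T}\,\Omega$ becomes $\theta(\gamma)\in[\theta_1,\theta_2]$ and $y(\gamma)\in [T r_1(\theta)^2,\, T r_2(\theta)^2]$. The condition $x(\gamma)\in\mathcal{I}$ is pure $N_-$, and since $\mathcal{I}$ can be taken inside a fundamental domain $[0,1)$ for the action of $\Gamma_\infty=\langle n_-(1)\rangle$, each left-$\Gamma_\infty$-coset has a unique representative with $x(\gamma)\in[0,1)$, which handles the passage from $\Gamma$ to $\Gamma_\infty\backslash\Gamma$.

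Next I would slice the angular range. Partition $[\theta_1,\theta_2]$ into short subintervals $[\alpha_j,\alpha_{j+1}]$, set $K_j=\{k(\theta):\theta\in[\alpha_j,\alpha_{j+1}]\}$ and $N_\mathcal{I}=\{n_-(x):x\in\mathcal{I}\}$. On each slice, the piecewise smoothness of $r_1,r_2$ allows us to replace $y\in[Tr_1(\theta)^2,Tr_2(\theta)^2]$ by $y\in[Tr_1(\alpha_j)^2,Tr_2(\alpha_j)^2]$ at the cost of an error that vanishes as the partition is refined. Taking the difference of the counts coming from $A_{Tr_2(\alpha_j)^2}$ and $A_{Tr_1(\alpha_j)^2}$, Theorem \ref{thm:bisector counting} applied to $\Omega_1=N_\mathcal{I}$ and $\Omega_2=K_j$ gives
\begin{equation*}
\#\bigl(\Gamma\cap N_\mathcal{I}(A_{Tr_2(\alpha_j)^2}\setminus A_{Tr_1(\alpha_j)^2})K_j\bigr)
\sim\frac{T^{\delta_\Gamma}}{\delta_\Gamma\,|\BMS|}\,\mu^{PS}(\mathcal{I}(X_i))\bigl(r_2(\alpha_j)^{2\delta_\Gamma}-r_1(\alpha_j)^{2\delta_\Gamma}\bigr)\nu_i(K_j^{-1}X_i^-),
\end{equation*}
where the crucial $T^{\delta_\Gamma}$-scaling together with the factor $r_i(\alpha_j)^{2\delta_\Gamma}$ comes from the $\delta_\Gamma$-homogeneity of the bisector count in the $A$-parameter. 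Summing over $j$ and passing to the limit as the partition is refined yields a Riemann sum converging to the claimed integral, with $d\nu_i(\theta)$ being the pushforward of $\nu_i$ under the parameterization $\theta\mapsto k(\theta)^{-1}X_i^-$.

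The main obstacle is ensuring that the passage from BKS applied on each slice to the Riemann sum is uniform: one needs the BKS error terms to be uniform in the $K$-piece and in the two endpoint $y$-values (which follows from inspection of the effective version of Theorem \ref{thm:bisector counting}, or alternatively by a standard sandwich argument between open and closed slices whose boundaries have vanishing $(\mu^{PS}\otimes\nu_i)$-measure). The hypothesis that $\partial\Omega$ is piecewise smooth ensures exactly these boundary conditions for the application of the counting theorem on each slice, since smooth curves in the $(c,d)$-plane pull back under $(y,\theta)\mapsto(y^{1/2}\sin\theta,y^{1/2}\cos\theta)$ to piecewise-smooth boundaries in the $A$- and $K$-directions, and the Patterson--Sullivan density $\nu_i$ has no atoms on $\mathcal{L}(\Gamma)$ under our non-elementary assumption.
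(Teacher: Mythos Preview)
Your proposal is correct and follows essentially the same route as the paper: identify $(c_\gamma,d_\gamma)=(y^{1/2}\sin\theta,\,y^{1/2}\cos\theta)$ via the $N_-AK$ decomposition, partition $[\theta_1,\theta_2]$ into short angular slices, apply Theorem~\ref{thm:bisector counting} on each slice with inner/outer radial approximations, and pass to the limit as a Riemann sum against $d\nu_i(\theta)$. The paper's proof is slightly terser but structurally identical; the one point it makes explicit that you leave implicit is that Theorem~\ref{thm:bisector counting} produces $\nu_i(\Omega_2^{-1}X_i^-)$ rather than $\nu_i(\Omega_2 X_i^-)$, which a priori flips the angular interval to $[-\theta_2,-\theta_1]$, and the paper disposes of this by invoking the symmetry of the group (in the application $\Gamma=\wh\Gamma$ contains $k(\pi/2)$) rather than by redefining the parameterization as you do.
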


\begin{proof}
  The proof follows the same lines as {\cite[Proposition 5.3]{RudnickZhang2017}}. First we note that using the Iwasawa decomposition of $\gamma$, we have $d_{\gamma}=y^{1/2}\cos\theta$, $c_{\gamma}=y^{1/2}\sin\theta$. Therefore $(y^{1/2}, \theta)$ give a polar coordinate decomposition of the plane.  The rest of the argument follows from a Riemann sum approximation which works equally well when working with $\nu_{i}$. 

  Split the interval $I = [\theta_1,\theta_2]$ into separate equally spaced intervals $\{I_i\}_{i=1}^n$. Take $\theta_{1,i}^+$, and $\theta_{1,i}^-$ to be the points in $I_i$ where $r_1$ is maximized (resp. minimized) and $\theta_{2,i}^+$, and $\theta_{2,i}^-$ to be the points at which $r_2$ is maximized (resp. minimized). Now define

  \begin{align}
    \begin{aligned}
      \Omega_n^- &= \bigcup_{i=1}^n I_i \times [ r_1(\theta_{1,i}^-), r_2(\theta_{2,i}^+)]\\
      \Omega_n^+ &= \bigcup_{i=1}^n I_i \times [ r_1(\theta_{1,i}^+), r_2(\theta_{2,i}^-)].
    \end{aligned}
  \end{align}
  Thus $\Omega_n^- \subseteq \Omega \subseteq \Omega_n^+$ and 

  \begin{multline}
    \lim_{n \to \infty} \sum_{i=1}^n\int_{I_i}  \left(r^{2\delta_{\Gamma}}_2(\theta_{2,i}^+) -  r_1^{2\delta_{\Gamma}}(\theta_{1,i}^-)\right)d\nu_{i}(\theta)\\ = \lim_{n \to \infty} \sum_{i=1}^n\int_{I_i}  \left(r^{2\delta_{\Gamma}}_2(\theta_{2,i}^-) -  r_1^{2\delta_{\Gamma}}(\theta_{1,i}^+)\right)d\nu_{i}(\theta)\\ = \int_{\theta_1}^{\theta_2} \left(r_2^{2\delta_{\Gamma}}(\theta) - r_1^{2\delta_{\Gamma}}(\theta)\right) d\nu_{i}(\theta).
  \end{multline}
  For the truncated regions $\Omega^+_n$ and $\Omega^-_n$ the proposition follows readily with the observation that in \eqref{bisector}, the fact that the conformal density is evaluated at $\Omega_2^{-1}$ simply means that the bounds of integration would be $[-\theta_2,-\theta_1]$. However since our group is symmetric this is equal the integral over $[\theta_1,\theta_2]$. From, since \eqref{nice subset} satisfies finite additivity, the proposition follows.

\end{proof}

Summarizing: provided $s \le  s_0 =  7\frac{1}{2}$ the gap distribution at time $T$ can be written

\begin{equation} 
  \wh{F}_{T,\cI}(s) = F^{1,2}_{T,\cI}(s) + F^{2,3}_{T,\cI}(s).
\end{equation}
Moreover we can take the limit as $T \to \infty$ and \eqref{splitting 2} becomes

\begin{equation}
  \wh{F}_{\cI}(s) = F^{1,2}_{\cI}(s) + F^{2,3}_{\cI}(s)
\end{equation}
where, for $(i,j) =(1,2),(2,3)$

\begin{equation} \label{Fij}
  F_{\cI}^{i,j}(s) = \frac{1}{\delta_{\wh{\Gamma}}|\BMS|} \mu^{PS}(\cI(X_{i})) \int^{\theta_2^{i,j}(s)}_{\theta_1^{i,j}(s)} \left( r_2^{i,j}(\theta,s)^{2\delta_{\wh{\Gamma}}} - r_1^{i,j}(\theta,s)^{2\delta_{\wh{\Gamma}}}\right) d\nu_{i}(\theta), 
\end{equation}
where $\left.r_2^{i,j}(\theta,s)\right|_{\theta \in [\theta_1^{i,j}(s),\theta_2^{i,j}(s)]}$ and $\left.r_1^{i,j}(\theta,s)\right|_{\theta \in [\theta_1^{i,j}(s),\theta_2^{i,j}(s)]}$ are the curves in polar coordinates forming the boundary of $\Omega^{i,j}(s)$. 

For convenience define the constant

\begin{equation}
  \kappa := \frac{1}{\delta_{\wh{\Gamma}}|\BMS|} \mu^{PS}(\cI(X_{i}))
\end{equation}

\subsection{Properties of the Limiting Gap Distribution}
\label{ss:Properties of the Limiting Gap Distribution}

Looking first at $\Omega_1^{1,2}$ defined by \eqref{Omega 12 1}, \eqref{Omega 12 2} and \eqref{gap cond 12}, however since $s < s_0 = 7 \frac{1}{2}$, \eqref{Omega 12 2} can be ignored. Hence we have the region (in $(c,d)$-coordinates):

\begin{equation}
  \Omega_1^{1,2}(s) = [0, \frac{1}{\sqrt{2}}] \times [-\frac{1}{\sqrt{2}}, \frac{1}{\sqrt{2}}] \cap \set{(c,d): c\ge \frac{1}{s|d|}}.
\end{equation}
This region is symmetric under reflection across the $y$ axis and since the conformal density in \eqref{Fij} is invariant under this reflection we can consider 

\begin{equation}\label{Omega tilde}
  \tilde{\Omega}_1^{1,2}(s) = [0, \frac{1}{\sqrt{2}}] \times [0, \frac{1}{\sqrt{2}}] \cap \set{(c,d): c\ge \frac{1}{s|d|}}
\end{equation}
instead, and the only difference will be a factor of $2$.

Regarding $\Omega_1^{2,3}(s)$, from \eqref{Omega 23 1} we know that $\Omega_1^{2,3}$ is a subset of the triangle

\begin{equation}
  -\frac{1}{\sqrt{2}} \le d \le \frac{1}{\sqrt{2}} \quad , \quad 0\le c < \frac{1}{4\sqrt{2}}-d
\end{equation}
Moreover \eqref{Omega 23 2} implies that when $d <0$, if $c> - \frac{d}{4}$ then $c > \frac{1}{\sqrt{2}}$, thus $\Omega_1^{2,3} = T_1 \cup T_2$ where

\begin{gather}
  T_1 := \set{(c,d) : c,d \ge 0 \; , \; c< \frac{1}{4\sqrt{2}}-d }\\
  T_2 := \set{(c,d) : c\ge 0 \;, \; -\frac{1}{\sqrt{2}} \le d \le 0 \; , \; c\le -\frac{d}{4}}.
\end{gather}
Now looking at the condition imposed by \eqref{gap cond 23}, it is straightforawd to see that, for $s <8$, $\Omega_1^{2,3}(s)$ does not intersect $T_2$. Hence, for $s< s_0<8$:

\begin{equation} \label{Omega 23 def}
  \Omega^{2,3}_1(s) = \set{ (c,d) \in T_1 \; : \; c \le \frac{1}{sd}- \frac{d}{4}}.
\end{equation}

So far we have established that

\begin{equation}\label{F Omega}
  \wh{F}(s) = \kappa \mu(\Omega^{2,3}_1(s)) + 2\kappa  \mu(\tilde{\Omega}_1^{1,2}(s))
\end{equation}
where, for a general set $A = \set{(r\cos\theta, r\sin \theta): r \in [r_1^A(\theta),r_2^A(\theta)], \theta \in [\theta^A_1,\theta^A_2]}$,

\begin{equation} \label{mu def}
  \mu(A):= \int_{\theta^A_1}^{\theta_2^A}\left(r_2^A(\theta)^{2\delta_{\wh{\Gamma}}} - r_1^A(\theta)^{2\delta_{\wh{\Gamma}}} \right)d\nu_{i}(\theta).
\end{equation}

Thus $\wh{F}(s)$ is explicitly calculated in terms of the fractal measure $\nu_{i}$. Unfortunately this measure is not itself explicit (in that it is defined as the weak limit of a sequence of measures). However it does lend itself to simulations (which we will not do here) and one can calculate certain analytic properties of $\wh{F}$, we present three below:

%%%%%%%%%%%%%%%%%%%%%%%%%%%%%%%%%%%%%%%%%%%%%%%%%%%%%%%%%%%%%%%%%%%%%%%%%%%%%%%%
%----------------------Proposition:level repulsion------------------------------
%%%%%%%%%%%%%%%%%%%%%%%%%%%%%%%%%%%%%%%%%%%%%%%%%%%%%%%%%%%%%%%%%%%%%%%%%%%%%%%%

\begin{proposition}\label{prop:level repulsion}

  $\wh{F}_{\cI}(s) =0$ for all $s< 2$ for any $\cI$. Moreover, \emph{all} gaps are larger than $2$.

\end{proposition}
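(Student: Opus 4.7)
The plan is to combine Proposition \ref{prop:unique gp ele} with the explicit descriptions of $\Omega^{1,2}_T$ and $\Omega^{2,3}_T$ together with the scaled gap conditions \eqref{gap cond 12}, \eqref{gap cond 23}. Since $2 < s_0 = 7.5$, Proposition \ref{prop:unique gp ele} applies throughout: any pair of neighboring tangencies at level $T$ whose scaled gap is at most $s$ (for $s<2$) must arise as $\gamma(\alpha_i,\alpha_j)$ with $(i,j)\in\{(1,2),(2,3)\}$. So it suffices to rule out scaled gaps below $2$ for these two configurations, at every finite $T$.

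For the $(1,2)$-type pairs, the neighboring condition \eqref{Omega 12 1} forces $c_\gamma^2 \le T/2$ and $d_\gamma^2 \le T/2$, so
\begin{equation*}
  c_\gamma|d_\gamma| \;\le\; \sqrt{T/2}\cdot\sqrt{T/2} \;=\; T/2.
\end{equation*}
But the scaled-gap condition \eqref{gap cond 12} requires $c_\gamma|d_\gamma|\ge T/s$, so we must have $T/s\le T/2$, i.e.\ $s\ge 2$. For the $(2,3)$-type pairs, \eqref{Omega 23 1} similarly gives $d_\gamma^2\le T/2$ and $(4c_\gamma+d_\gamma)^2\le T/2$, hence
\begin{equation*}
  |d_\gamma(4c_\gamma+d_\gamma)| \;\le\; T/2,
\end{equation*}
while \eqref{gap cond 23} demands $|d_\gamma(4c_\gamma+d_\gamma)|\ge 4T/s$, forcing $s\ge 8$. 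Taken together, no pair of the allowed types can have scaled gap strictly less than $2$ at any $T$.

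Combining these two bounds with Proposition \ref{prop:unique gp ele}, every adjacent pair in $\mathcal{A}_{T,\mathcal{I}}$ (for any $T$ and any closed $\mathcal{I}\subset[0,1]$) has scaled gap strictly greater than $2$; this gives the second assertion of the proposition. In particular, $\wh F_{T,\mathcal{I}}(s)=0$ for every $s<2$ and every $T$, so letting $T\to\infty$ yields $\wh F_{\mathcal{I}}(s)=0$, which is the first assertion.

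I do not expect any real obstacle: once Proposition \ref{prop:unique gp ele} is available, the argument is a direct two-line estimate using the Cauchy–Schwarz-style bound $c|d|\le (c^2+d^2)/2$ (or simply $\sqrt{c^2}\sqrt{d^2}$) against the explicit box constraints that define $\Omega_T^{i,j}$. The only mild care needed is to confirm that the $(1,3)$ tangent case does not produce an extra configuration to check — but by \eqref{splitting 2} and the parenthetical remark following it, these gaps are absorbed into the $(1,2)$ count, so they are already handled by the $(1,2)$ estimate above.
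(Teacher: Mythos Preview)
Your proof is correct and follows essentially the same approach as the paper: both arguments observe that the box constraints \eqref{Omega 12 1}, \eqref{Omega 23 1} defining $\Omega^{i,j}_T$ force $c|d|\le T/2$ and $|d(4c+d)|\le T/2$, which are incompatible with the gap conditions \eqref{gap cond 12}, \eqref{gap cond 23} when $s<2$, so the regions $\Omega^{i,j}_T(s)$ (equivalently $\tilde\Omega_1^{1,2}(s)$, $\Omega_1^{2,3}(s)$) are empty. Your write-up is a bit more explicit --- you work directly at finite $T$ to handle the ``Moreover'' clause, and your $(2,3)$ bound $s\ge 8$ is in fact sharper than the paper's stated $s\ge 4$ --- but the underlying idea is identical.
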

This is a form of level repulsion and follows from the definitions of $\tilde{\Omega}_1^{1,2}(s)$ and $\Omega_1^{2,3}(s)$ and \eqref{F Omega}. Indeed $\tilde{\Omega}_1^{1,2}(s)$ is empty for $s<2$ and $\Omega_1^{2,3}(s)$ is empty for $s<4$.

$\nu_{i}$ is a fractal measure supported on the limit set. Hence, looking at \eqref{mu def}, if neither $\theta^A_1$ nor $\theta^A_2$ is in $\cL(\Gamma)$ (the support of $\nu_{i}$). Then the derivative of $\wh{F}$ will be easy to calculate:

%%%%%%%%%%%%%%%%%%%%%%%%%%%%%%%%%%%%%%%%%%%%%%%%%%%%%%%%%%%%%%%%%%%%%%%%%%%%%%%%
%----------------------PROPOSITION:outside LS-----------------------------------
%%%%%%%%%%%%%%%%%%%%%%%%%%%%%%%%%%%%%%%%%%%%%%%%%%%%%%%%%%%%%%%%%%%%%%%%%%%%%%%%

\begin{proposition} \label{prop:outside LS}
  Suppose $\mathcal{S} \subset (2,s_0)$ is a connected subset such that for all $s \in \mathcal{S}$,  $\theta^{i,j}_1(s)$ and $\theta^{i,j}_2(s) \not\in \cL(\Gamma)$ for $(i,j) =(1,2)$ or $(2,3)$, then 

  \begin{equation}
    P(s) = \wh{F}^{\prime}(s) = \frac{C_{\mathcal{S}}}{s^{\delta_{\wh{\Gamma}}+1}},
  \end{equation}
  where $ 0 \le C_{\mathcal{S}}<\infty$ depends on the region $\mathcal{S}$ but not on $s\in \mathcal{S}$ and is explicit.
\end{proposition}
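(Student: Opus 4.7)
The plan is to differentiate the closed-form expression \eqref{F Omega} for $\wh{F}_\cI(s)$ and track the $s$-dependence through the explicit hyperbolic boundaries of $\tilde{\Omega}_1^{1,2}(s)$ and $\Omega_1^{2,3}(s)$. The starting observation is that for $s\in(2,s_0)$ the boundary of each $\Omega_1^{i,j}(s)$ decomposes into an $s$-independent polygonal arc (coming from the box in \eqref{Omega tilde} or the triangle $T_1$ in \eqref{Omega 23 def}) together with one $s$-dependent hyperbolic arc coming from \eqref{gap cond 12} or \eqref{gap cond 23}. Passing to polar coordinates $(c,d) = (r\sin\theta, r\cos\theta)$, both hyperbolic arcs take the form $r_1^{i,j}(\theta,s)^{2} = G_{i,j}(\theta)/s$ for an explicit $G_{i,j}$ (for example $G_{1,2}(\theta) = 2/\sin 2\theta$ and $G_{2,3}(\theta) = 4/[\cos\theta(4\sin\theta+\cos\theta)]$), while $r_2^{i,j}(\theta)$ is $s$-independent. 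Hence the integrand in \eqref{Fij} separates as $r_2^{i,j}(\theta)^{2\delta_{\wh{\Gamma}}} - s^{-\delta_{\wh{\Gamma}}}\,G_{i,j}(\theta)^{\delta_{\wh{\Gamma}}}$.

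The next step is to handle the $s$-dependence of the integration limits $\theta_l^{i,j}(s)$, which is where the hypothesis $\theta_l^{i,j}(s)\notin\mathcal{L}(\Gamma)$ intervenes. Since $\mathcal{L}(\Gamma)=\mathrm{supp}(\nu_i)$ is closed, each endpoint admits an open neighbourhood of $\nu_i$-measure zero; by continuity of $s\mapsto\theta_l^{i,j}(s)$ this neighbourhood persists for $s$ in a small open set, so moving $\theta_l^{i,j}(s)$ within it leaves the integral unchanged. A connectedness argument on $\mathcal{S}$ then shows that $\mathcal{L}(\Gamma) \cap [\theta_1^{i,j}(s),\theta_2^{i,j}(s)]$ cannot vary throughout $\mathcal{S}$ (it could only change if a point of $\mathcal{L}(\Gamma)$ crossed an endpoint, which the hypothesis forbids), so the effective domain of integration is a single set $\Theta_\mathcal{S}^{i,j}$ for every $s\in\mathcal{S}$, and the Leibniz boundary contributions disappear.

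With the domain of integration fixed, differentiation under the integral sign yields
\[
    \frac{d}{ds}F^{i,j}_\cI(s) = \kappa\,\delta_{\wh{\Gamma}}\,s^{-\delta_{\wh{\Gamma}}-1}\int_{\Theta_\mathcal{S}^{i,j}} G_{i,j}(\theta)^{\delta_{\wh{\Gamma}}}\,d\nu_i(\theta).
\]
Summing over $(i,j)\in\{(1,2),(2,3)\}$ with the factor of $2$ attached to $\tilde{\Omega}_1^{1,2}$ in \eqref{F Omega} packages the two integrals into a single nonnegative constant $C_\mathcal{S}$ depending only on $\mathcal{S}$, giving $P(s)= C_\mathcal{S}/s^{\delta_{\wh{\Gamma}}+1}$ as claimed.

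The main obstacle is the second step: rigorously justifying that the endpoint contributions vanish when $\nu_i$ is a fractal measure supported on the limit set. Though intuitively transparent, it relies on continuity of $s\mapsto\theta_l^{i,j}(s)$ (immediate from the defining equations of the hyperbolic arcs) together with openness of $\mathcal{L}(\Gamma)^c$; the connectedness of $\mathcal{S}$ is then needed to upgrade a local constancy statement to a global one. Everything else reduces to a routine polar-coordinate computation followed by an application of the Leibniz rule.
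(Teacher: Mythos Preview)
Your proposal is correct and follows essentially the same route as the paper: both arguments observe that $r_2^{i,j}$ is $s$-independent while $r_1^{i,j}(\theta,s)^{2\delta_{\wh\Gamma}} = s^{-\delta_{\wh\Gamma}}G_{i,j}(\theta)^{\delta_{\wh\Gamma}}$, freeze the integration limits using the hypothesis that the endpoints lie outside $\supp(\nu_i)=\cL(\wh\Gamma)$, and then differentiate under the integral sign. Your treatment of the endpoint-freezing step via connectedness of $\mathcal{S}$ is somewhat more explicit than the paper's, which simply fixes $s_1=\inf\mathcal{S}$ and asserts that the residual integral over $[\theta_l^{i,j}(s_1),\theta_l^{i,j}(s)]$ vanishes because $\nu_i$ is supported away from that range.
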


\begin{proof}
  Let $s_1 = \inf\set{s \in \mathcal{S}}$, in which case, for $s \in \mathcal{S}$ we separate the integral in \eqref{F Omega} and write

  \small
  \begin{eqnarray}
    \wh{F}(s) &=& \kappa\int_{\theta_1^{2,3}(s)}^{\theta_2^{2,3}(s)} \left(r_2^{2,3}(\theta,s)^{2\delta_{\wh{\Gamma}}} - r_2^{2,3}(\theta,s)^{2\delta_{\wh{\Gamma}}}\right) d\nu_{i} (\theta) + 2\kappa\int_{\theta_1^{1,2}(s)}^{\theta_2^{1,2}(s)} \left(r_2^{1,2}(\theta,s)^{2\delta_{\wh{\Gamma}}} - r_2^{1,2}(\theta,s)^{2\delta_{\wh{\Gamma}}}\right) d\nu_{i} (\theta) \notag \\
        &=& \kappa \int_{\theta_1^{2,3}(s_1)}^{\theta_2^{2,3}(s_1)} \left(r_2^{2,3}(\theta)^{2\delta_{\wh{\Gamma}}} - r_2^{2,3}(\theta,s)^{2\delta_{\wh{\Gamma}}}\right) d\nu_{i} (\theta) + 2\kappa\int_{\theta_1^{1,2}(s_1)}^{\theta_2^{1,2}(s_1)} \left(r_2^{1,2}(\theta)^{2\delta_{\wh{\Gamma}}} - r_2^{1,2}(\theta,s)^{2\delta_{\wh{\Gamma}}}\right) d\nu_{i} (\theta)  \notag \\
       && + R(s,\mathcal{S}) \notag
  \end{eqnarray}
  \normalsize
  where we have noted that (by \eqref{Omega tilde} and \eqref{Omega 23 def}), $r_2$ is independent of $s$ . In fact, since on $\mathcal{S}$, $\theta^{i,j}_1(s)$ and $\theta^{i,j}_2(s)$ are outside $\cL(\Gamma)$, $R(s,\mathcal{S})$ is $0$ (as the measure is supported away from the range of integration). Hence, taking a derivative:

  \begin{equation}
    P(s) = - \kappa \int_{\theta_1^{2,3}(s_1)}^{\theta^{2,3}_2(s_1)} \frac{dr_1^{2,3}(\theta,s)^{2\delta}}{ds} d\nu_{i} (\theta) - 2\kappa \int_{\theta_1^{1,2}(s_1)}^{\theta^{1,2}_2(s_1)} \frac{dr_1^{1,2}(\theta,s)^{2\delta}}{ds} d\nu_{i} (\theta).
  \end{equation}
  Moreover, for $s < s_0$ we have that 

  \begin{equation}
    r_1^{1,2}(\theta,s) = \frac{1}{\sqrt{s}} \sqrt{\frac{1}{\cos \theta \sin\theta}} \quad , \quad r_1^{2,3}(\theta,s) = \frac{1}{\sqrt{s}}\sqrt{\frac{1}{\left(\sin\theta \cos\theta + \frac{\cos^2\theta}{4}\right)}}.
  \end{equation}
  Therefore, for $s \in \mathcal{S}$

  \begin{equation}
    P(s) = \frac{\kappa}{s^{\delta_{\wh{\Gamma}}+1}} \left( \int_{\theta_1^{2,3}(s_1)}^{\theta^{2,3}_2(s_1)} \left( \frac{1}{\left(\sin\theta \cos\theta + \frac{\cos^2\theta}{4}\right)} \right)^{\delta_{\wh{\Gamma}}} d\nu_{i} (\theta) + 2 \int_{\theta_1^{1,2}(s_1)}^{\theta^{1,2}_2(s_1)} \left(\frac{1}{\cos \theta \sin\theta}\right)^{\delta_{\wh{\Gamma}}} d\nu_{i} (\theta) \right).
  \end{equation}

\end{proof}

The final analytic property we calculate for $\wh{F}$ is the following Lipschitz condition:

%%%%%%%%%%%%%%%%%%%%%%%%%%%%%%%%%%%%%%%%%%%%%%%%%%%%%%%%%%%%%%%%%%%%%%%%%%%%%%%%
%-----------------------------PROPOSITION:Holder--------------------------------
%%%%%%%%%%%%%%%%%%%%%%%%%%%%%%%%%%%%%%%%%%%%%%%%%%%%%%%%%%%%%%%%%%%%%%%%%%%%%%%%

\begin{proposition}\label{prop:Lipschitz}
  $\wh{F}$ is Lipschitz in a neighborhood of $s$ whenever $s \in  [0,4)$

  \begin{equation} \label{Lipschitz}
    \abs{\wh{F}(s) -\wh{F}(s+x)} \le C_{s}x
  \end{equation}
  for some constant $C_{s}<\infty$.
\end{proposition}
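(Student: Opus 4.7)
The plan is to exploit that only the $(1,2)$-piece contributes on $[0,4)$: by \eqref{Omega 23 def} the region $\Omega^{2,3}_1(s)$ is empty for $s<4$, so $F^{2,3}_{\cI}\equiv 0$ in a neighborhood of any such $s$ and $\wh F = F^{1,2}_{\cI}$. On $[0,2)$, Proposition \ref{prop:level repulsion} gives $\wh F\equiv 0$, so Lipschitz continuity there is immediate; the real work is for $s\in[2,4)$.

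Setting $g_s(\theta):= r_2(\theta)^{2\delta_{\wh\Gamma}} - r_1(\theta,s)^{2\delta_{\wh\Gamma}}$ with $r_1(\theta,s)=(s\sin\theta\cos\theta)^{-1/2}$ and $r_2(\theta)=\min\{(\sqrt{2}\cos\theta)^{-1},(\sqrt{2}\sin\theta)^{-1}\}$, the starting point is that the endpoints $\theta_1(s)=\arctan(2/s)$ and $\theta_2(s)=\pi/2-\arctan(2/s)$ are smooth in $s$ on $[2,4]$ with uniformly bounded derivatives, and $g_s(\theta_i(s))=0$ by construction. For $s\in(2,4)$ fixed and $x$ small enough that $s+x\in(2,4)$, I would decompose
\begin{equation}
  \wh F(s+x)-\wh F(s) = 2\kappa\!\int_{\theta_1(s)}^{\theta_2(s)}\!(g_{s+x}-g_s)\,d\nu_{i} + 2\kappa\!\int_{\theta_1(s+x)}^{\theta_1(s)}\!g_{s+x}\,d\nu_{i} + 2\kappa\!\int_{\theta_2(s)}^{\theta_2(s+x)}\!g_{s+x}\,d\nu_{i},
\end{equation}
splitting the difference into a bulk term and two boundary terms. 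The bulk term is $O(x)$ because $g_{s+x}-g_s = (\sin\theta\cos\theta)^{-\delta_{\wh\Gamma}}\bigl(s^{-\delta_{\wh\Gamma}}-(s+x)^{-\delta_{\wh\Gamma}}\bigr)$ is pointwise $O(x)$ on the compact $\theta$-range and $\nu_i$ is finite. For each boundary term the integration interval lies entirely on one side of $\pi/4$, has length $O(x)$ (by $C^1$-smoothness of $\arctan(2/\cdot)$), and on it $g_{s+x}$ is Lipschitz in $\theta$ with a bounded constant and vanishes at $\theta_i(s+x)$; the pointwise estimate $|g_{s+x}(\theta)|\le L|\theta-\theta_i(s+x)|$ combined with $|\theta_i(s+x)-\theta_i(s)|=O(x)$ and $|\nu_i|<\infty$ bounds each boundary term by $C_s x$.

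The main obstacle is handling neighborhoods that straddle $s=2$, where $\theta_1(2)=\theta_2(2)=\pi/4$ and the three-piece decomposition degenerates, since the bulk term is empty and both endpoints collide at the corner of $r_2$. Here I would argue directly by splitting the single integral at the corner,
\begin{equation}
  \wh F(2+x) = 2\kappa\!\int_{\theta_1(2+x)}^{\pi/4}\!g_{2+x}\,d\nu_{i} + 2\kappa\!\int_{\pi/4}^{\theta_2(2+x)}\!g_{2+x}\,d\nu_{i},
\end{equation}
so that each piece now lies on one side of $\pi/4$, has length $O(x)$, and admits the same vanishing-at-endpoint Lipschitz estimate as above. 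This yields $\wh F(2+x)\le Cx\cdot|\nu_i|$; combined with $\wh F\equiv 0$ on $[0,2]$ this delivers the Lipschitz bound on any small neighborhood crossing $s=2$. A satisfying feature of the approach is that, unlike Proposition \ref{prop:outside LS}, no local regularity (doubling, dimensional estimate, or the global measure formula) of the fractal measure $\nu_i$ is used beyond its total finiteness; everything is reduced to elementary Lipschitz estimates on $g_s$ and the explicit smoothness of $\theta_{1,2}(s)$.
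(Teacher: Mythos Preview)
Your argument is correct, and it is genuinely different from the paper's. The paper splits into cases according to whether the endpoints $\theta_1^{1,2}(s),\theta_2^{1,2}(s)$ lie in $\cL(\wh\Gamma)$: when they do not, differentiability comes for free from Proposition~\ref{prop:outside LS}; when an endpoint is a parabolic limit point, the paper invokes the global measure formula (Corollary~\ref{cor:GMF cor}) to control $d\nu_i$ near that point, leading to the bound $|\wh F(s)-\wh F(s+x)|\le C_s\,x$ after integrating $\theta^{2\delta_{\wh\Gamma}-2}\,d\theta$ and Taylor expanding.

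Your route avoids the global measure formula entirely. The crucial observation you use is that the integrand $g_{s+x}$ vanishes at the moving endpoint $\theta_i(s+x)$ and is Lipschitz in $\theta$ on each boundary interval (which, as you note, lies on a single side of $\pi/4$, so $r_2$ is smooth there). This pins $|g_{s+x}|$ down to $O(x)$ pointwise on an interval of $\nu_i$-measure at most $|\nu_i|<\infty$, and the bulk term is handled by the explicit factorization $g_{s+x}-g_s=(\sin\theta\cos\theta)^{-\delta_{\wh\Gamma}}\bigl(s^{-\delta_{\wh\Gamma}}-(s+x)^{-\delta_{\wh\Gamma}}\bigr)$. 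The separate treatment of the degeneration at $s=2$ by splitting at $\pi/4$ is clean and correct. What your approach buys is elementarity and uniformity: no dimensional or doubling information about $\nu_i$ is needed, and the Lipschitz constant you obtain is visibly locally uniform on $(2,4)$ rather than assembled pointwise. What the paper's approach buys, in principle, is finer control of $\nu_i$ at parabolic points, which could be useful for sharper regularity statements, but for the Lipschitz claim alone your argument is the more economical one.
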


\begin{proof}
  $\wh{F}$ is $0$ on $[0,2)$. Moreover Proposition \ref{prop:outside LS} implies the $\wh{F}$ is differentiable when both $\theta_1^{1,2}$ and $\theta_2^{1,2}$ are outside $\cL(\wh{\Gamma})$. Hence we only need to worry about when $\theta_1^{1,2}(s)$ or $\theta_2^{1,2}(s)$ is a parabolic fixed point (since parabolic points are dense in the limit set).

For any $2\le s < 4$ such that $\theta_1^{1,2}(s)$ or $\theta_2^{1,2}(s)$ is a parabolic fixed point:

  \begin{multline}\label{abs dif}
    \abs{\wh{F}(s) -\wh{F}(s+x)} \le C \left| \int_{\theta^{1,2}_2(s)}^{\theta_2^{1,2}(s+x)} \left( r_2^{1,2}(\theta)^{2\delta_{\wh{\Gamma}}} - r_1^{1,2}(\theta,s)^{2\delta_{\wh{\Gamma}}}\right) d\nu_{i}(\theta)\right. \\
    +\left. \int_{\theta^{1,2}_1(s+x)}^{\theta_1^{1,2}(s)} \left( r_2^{1,2}(\theta)^{2\delta_{\wh{\Gamma}}} - r_1^{1,2}(\theta,s)^{2\delta_{\wh{\Gamma}}}\right) d\nu_{i}(\theta)\right|
  \end{multline}
  Plugging in the formula for $r_2^{1,2}$ and $r_1^{1,2}$ and using Corollary \ref{cor:GMF cor} gives that the first term on the right hand side of \eqref{abs dif} is less than

  \begin{equation}
    \le C_s \abs{ \int_{\theta_2^{1,2}(s)}^{\theta^{1,2}_2(s+x)} \theta^{2\delta_{\wh{\Gamma}}-2} \left(\left(\frac{1/\sqrt{2}}{\sin\theta}\right)^{2\delta_{\wh{\Gamma}}} - \left( \frac{1}{(s+x)\cos\theta \sin \theta}\right)^{\delta_{\wh{\Gamma}}}    \right) d\theta}
  \end{equation}
  in the range with which we are concerned we can bound this integral (by adjusting the constant) by

  \begin{equation}
    \le C_s  \int_{\theta_2^{1,2}(s)}^{\theta^{1,2}_2(s+x)} \theta^{2\delta_{\wh{\Gamma}}-2} d\theta.
  \end{equation}
  Evaluating the integral and performing the same analysis on the other term in \eqref{abs dif} gives

  \begin{equation}
    \abs{\wh{F}(s) -\wh{F}(s+x)}  \le C_s \left(\theta_2^{1,2}(s+x)^{2\delta_{\wh{\Gamma}}-1}- \theta_2^{1,2}(s)^{2\delta_{\wh{\Gamma}}-1}   \right) + C_s \left( \theta_1^{1,2}(s)^{2\delta_{\wh{\Gamma}}-1} - \theta_1^{1,2}(s+x)^{2\delta_{\wh{\Gamma}}-1}\right).
  \end{equation}
  Inserting the definition of $\theta_2^{1,2}$ and $\theta_1^{1,2}$ then gives

  \begin{equation}
    \abs{\wh{F}(s) -\wh{F}(s+x)}  \le C_s \left(\tan^{-1}(s+x)^{2\delta_{\wh{\Gamma}}-1}- \tan^{-1}(s)^{2\delta_{\wh{\Gamma}}-1}   \right) + C_s \left( \cot^{-1}(s)^{2\delta_{\wh{\Gamma}}-1} - \cot^{-1}(s+x)^{2\delta_{\wh{\Gamma}}-1}\right).
  \end{equation}
  From here, Taylor expanding gives

  \begin{equation} \label{Holder}
    \abs{\wh{F}(s) -\wh{F}(s+x)} \le C \abs{\left( \frac{\pi}{4} + \frac{x}{4}\right)^{2\delta_{\wh{\Gamma}}-1} - \left( \frac{\pi}{4}\right)^{2\delta_{\wh{\Gamma}}-1} } + C\abs{\left( \frac{\pi}{4}\right)^{2\delta_{\wh{\Gamma}}-1} - \left( \frac{\pi}{4} - \frac{x}{4}\right)^{2\delta_{\wh{\Gamma}}-1} }.
  \end{equation}
  Here, expanding again gives us that $\wh{F}$ is Lipschitz.

\end{proof}

  \section{Gauss-Like Measure}
 \label{sec:Gauss-Like Measure}
 As in the previous section this section is restricted to the example $\wh{\Gamma}$. The goal for this section is to derive and study the measure 

 \begin{equation} \label{gauss}
   m^0(E) = C_0 \int_E \int_{-2}^2 \frac{d\mu^{PS}(x)}{\abs{xy-1}^{2\delta_{\wh{\Gamma}}}}  d\mu^{PS}(y).
 \end{equation}
 where $E$ is a Borel set in $\cL(\wh{\Gamma}) \cap (-2,2)$ and $C_0$ is a normalizing constant. In particular we show that this measure is invariant and ergodic for the Gauss map. Then, as a corollary of this ergodicity, we are able to show that the Gauss-Kuzmin statistics on $\cQ_4$ converge to an explicit function. It should be noted that the density in \eqref{gauss} is a normalized eigenfunction for the transfer operator associated to the Gauss map. We shall avoid this zeta functions approach here, however it is a promising avenue for later research.

 \subsection{Setup}
 In \cite{Series1985} Series, for the modular group, shows that one can encode the endpoints of geodesics by a 'cutting sequence' which generates the continued fraction expansions of the endpoints. Moreover she identifies a cross-section of the unit tangent bundle such that the return map to this cross-section corresponds to the (classical) Gauss map on the end point. As an application of this, she shows that the Gauss measure is simply a projection of the Haar measure onto these end points. Thus, because the Haar measure is ergodic for the geodesic flow, the Gauss measure is ergodic for the Gauss map. The goal for this subsection is to construct the analogous measure in our context (for $\wh{\Gamma}$). To do this we will project the BMS measure in the same way and show that the resulting measure is ergodic for the Gauss map (for $\wh{\Gamma}$). In the end we will only be working with this measure, however for those interested in the Appendix, we show how to construct the analogous cutting sequences and cross-section in our context (we omit the formal proofs concerning the commuting diagrams as we do not use them and the details are the same as \cite{Series1985}).

 Throughout this section let $(-2,2)^{\ast} = (-2,2) \setminus \{0\}$. Consider the restriction of Gauss map to the limit set, $\cL(\wh{\Gamma}) = \overline{\cQ_4}$ (where $\overline{\cQ_4}$ denotes the closure):

 \begin{align} 
   \begin{aligned}
     \label{Gauss map}
     T :  & \cL(\wh{\Gamma}) \to \cL(\wh{\Gamma})\\
     & [0;a_1,a_2,\dots] \mapsto [0;a_2,\dots]
   \end{aligned}
 \end{align}
 and its inverse

 \begin{equation} \label{Gauss inv}
   T^{-1}([0;a_1,\dots,a_{n-1} ]) = \bigcup_{k \in 4\Z^{\ast}}[0;k,a_1,\dots,a_{n-1}].
 \end{equation}
 The $\sigma$-algebra associated to this Gauss map is now the Borel $\sigma$-algebra on $\R$ intersected with $\cL(\wh{\Gamma})$. The goal is now to take the Bowen-Margulis-Sullivan measure and project it to obtain a measure on $(-2,2)$. We choose the BMS measure as it is invariant and ergodic under the geodesic flow. Thus after projecting we are left with a measure invariant and ergodic under the Gauss map. The following lemma gives the parameterization, this was used in Sullivan's work \cite{Sullivan1979}, however we include the proof for completeness.

 %%%%%%%%%%%%%%%%%%%%%%%%%%%%%%%%%%%%%%%%%%%%%%%%%%%%%%%%%%%%%%%%%%%%%%%%%%%%%%%
 %------------------LEMMA: end pt-----------------------------------------------
 %%%%%%%%%%%%%%%%%%%%%%%%%%%%%%%%%%%%%%%%%%%%%%%%%%%%%%%%%%%%%%%%%%%%%%%%%%%%%%%

 \begin{lemma}\label{end pt}
   For $u \in T^{1}(\half)$ let $z$ denote the Euclidean midpoint of the geodesic containing $u$ and $t : = \beta_{u^-}(z,u)$ (thus $t$ is the arclength from $z$ to $u$). Then

   \begin{equation}
     d\BMS(u) = \frac{1}{\abs{u^+-u^-}^{2\delta_{\Gamma}}} d\mu^{PS}(u^-) d\mu^{PS}(u^+) dt.
   \end{equation}

 \end{lemma}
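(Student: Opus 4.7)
The plan is to expand the definition \eqref{BMS} of $d\BMS(u)$ and convert each factor in place, using three ingredients: (a) the cocycle identity for the Busemann function along a geodesic, (b) the definition of $\mu^{PS}$ from Section \ref{subsec:inf vol}, and (c) the explicit formula for $\beta_\xi(p,q)$ on $\half$ with finite basepoint. Throughout I will write $a = u^-$ and $b = u^+$, so by hypothesis $z = \frac{a+b}{2} + i\frac{b-a}{2}$.

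First I would show that the exponential factor $\exp\bigl(\delta_\Gamma[\beta_{u^-}(i,\pi(u)) + \beta_{u^+}(i,\pi(u))]\bigr)$ appearing in \eqref{BMS} is independent of where $\pi(u)$ sits on the geodesic. Splitting each Busemann function via $z$,
\[
\beta_{u^-}(i,\pi(u)) + \beta_{u^+}(i,\pi(u)) = \bigl[\beta_{u^-}(i,z) + \beta_{u^+}(i,z)\bigr] + \bigl[\beta_{u^-}(z,\pi(u)) + \beta_{u^+}(z,\pi(u))\bigr],
\]
and the second bracket vanishes because, on a geodesic, hyperbolic motion by arclength $r$ changes the Busemann functions based at the two endpoints by equal and opposite amounts (immediate from the limit definition of $\beta$). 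The same splitting applied to $s = \beta_{u^-}(i,\pi(u))$ yields $s = \beta_{u^-}(i,z) + \beta_{u^-}(z,\pi(u))$; since $\beta_{u^-}(i,z)$ is constant when $u^\pm$ are fixed, one reads off $ds = \pm\,dt$, with the sign convention matching the lemma.

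Second I would compute the constant $\beta_a(i,z) + \beta_b(i,z)$ explicitly. Conjugating the standard formula $\beta_\infty(p,q) = \log(\Im(q)/\Im(p))$ by the M\"obius map $w \mapsto -1/(w-\xi)$ yields
\[
\beta_\xi(p,q) = \log\frac{\Im(q)\,|p-\xi|^2}{\Im(p)\,|q-\xi|^2}, \qquad \xi \in \R.
\]
Using $\Im(z) = \tfrac{b-a}{2}$ and $|z-a|^2 = |z-b|^2 = \tfrac{(b-a)^2}{2}$, one obtains $\beta_a(i,z) = \log\frac{1+a^2}{b-a}$ and $\beta_b(i,z) = \log\frac{1+b^2}{b-a}$, so
\[
e^{\delta_\Gamma(\beta_a(i,z) + \beta_b(i,z))} = \frac{(1+a^2)^{\delta_\Gamma}(1+b^2)^{\delta_\Gamma}}{|b-a|^{2\delta_\Gamma}}.
\]

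Finally, the same Busemann formula applied at $(p,q) = (i, i+x)$ gives $\beta_x(i,i+x) = \log(1+x^2)$, so by the definition of $\mu^{PS}$ in Section \ref{subsec:inf vol} one has $d\mu^{PS}(x) = (1+x^2)^{\delta_\Gamma}\,d\nu_i(x)$. Substituting $d\nu_i(u^\pm) = (1+(u^\pm)^2)^{-\delta_\Gamma}\,d\mu^{PS}(u^\pm)$ into \eqref{BMS} and combining with the previous step, the $(1+a^2)^{\delta_\Gamma}(1+b^2)^{\delta_\Gamma}$ factors cancel exactly, leaving the claimed density $|u^+-u^-|^{-2\delta_\Gamma}$. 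There is no conceptual obstacle here: the argument is a careful bookkeeping exercise, and the cleanest point is that the $(1+x^2)^{\delta_\Gamma}$ weights in the definition of $\mu^{PS}$ are precisely tailored to absorb the Busemann factors based at $i$, so that only the endpoint-separation term survives.
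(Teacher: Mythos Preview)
Your proof is correct and follows essentially the same strategy as the paper: split the Busemann exponents in \eqref{BMS} via the cocycle identity so that the $\pi(u)$-dependence reduces to $\pm t$, identify $ds=dt$, and then recognise the remaining endpoint-dependent factors as the densities converting $d\nu_i(u^\pm)$ into $d\mu^{PS}(u^\pm)$ together with the $|u^+-u^-|^{-2\delta_\Gamma}$ term.

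The only presentational difference is where the split is made. The paper routes through the intermediate point $i+u^\pm$, writing
\[
\beta_{u^-}(i,\pi(u)) = \beta_{u^-}(i,\,i+u^-) + \beta_{u^-}(i+u^-,\,z) + t,
\]
and then reads off $\beta_{u^-}(i+u^-,z)$ geometrically as the signed hyperbolic distance between two concentric horoballs based at $u^-$ (of diameters $1$ and $|u^+-u^-|$), obtaining the $\ln|u^+-u^-|$ contribution without any coordinate computation; the first summand is already exactly the exponent in the definition of $\mu^{PS}$. You instead split only once through $z$, compute $\beta_{u^\pm}(i,z)$ in closed form via the explicit formula $\beta_\xi(p,q)=\log\bigl(\Im(q)|p-\xi|^2/\Im(p)|q-\xi|^2\bigr)$, and separately compute $d\mu^{PS}/d\nu_i = (1+x^2)^{\delta_\Gamma}$, finding the cancellation at the end. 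Both routes are equally valid; the paper's is a bit more geometric and avoids ever writing down $(1+x^2)^{\delta_\Gamma}$, while yours is more explicit and makes the cancellation completely transparent.
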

 
 \begin{remark}
   Note this Lemma is not specific to the subgroup $\wh{\Gamma}$ and holds for any Bowen-Margulis-Sullivan measure associated to a subgroup considered in this paper.
 \end{remark}
 
 \begin{proof}
   First (recalling $s$ from the definition of $\BMS$ \eqref{BMS}) note

   \begin{eqnarray}
     s &:=& \beta_{u^-}(i,u) \notag\\
       & = & \beta_{u^-}(i,z) +\beta_{u^-}(z,u) \notag\\
       & = & \beta_{u^-}(i,z) + t \notag\\
       & = & \beta_{u^-}(i,i + u^-) +  \beta_{u^-}(i + u^-,z) + t 
   \end{eqnarray}
   Now using the definition of the Busemann function, we note that $ \beta_{u^-}(i + u^-,z)$, is the hyperbolic distance (along the vertical geodesic at $u^-$) between the horoball of height $1$ based at $u^-$ and the horoball of height $\abs{u^+ - u^-}$. Thus

   \begin{equation}\label{s reparam}
     s = t + \beta_{u^-}(i,i + u^-) + \ln \abs{u^+-u^-}.
   \end{equation}
   Similarly

   \begin{equation}\label{u+}
     \beta_{u^+}(i,u) = -t + \beta_{u^+}(i,i + u^+) + \ln \abs{u^+-u^-}.
   \end{equation}
   Therefore, writing out the definition of the Burger Roblin measure and inserting \eqref{s reparam} and \eqref{u+}:

   \begin{align}
     \begin{aligned}
       \BMS(u) & :=  e^{\delta_{\Gamma} s} e^{\delta_{\Gamma}\beta_{u^+}(i,u)} d\nu_{i}(u^-) d\nu_{i}(u^+)ds  \\
       & =  \frac{1}{\abs{u^+ - u^-}^{2\delta_{\Gamma}}} (e^{\delta_{\Gamma}\beta_{u^-}(i,i + u^-)}d\nu_{i}(u^-)) (e^{\delta_{\Gamma}\beta_{u^+}(i,i + u^+)})d\nu_{i}(u^+)) dt \\
       & =  \frac{1}{\abs{u^+-u^-}^{2\delta_{\Gamma}}} d\mu^{PS}(u^-) d\mu^{PS}(u^+) dt
     \end{aligned}
   \end{align}
   where in the last line we insert the definition of $\mu^{PS}$.

 \end{proof}

 To derive the Gauss-type measure (similarly to \cite{Series1985} for the classical Gauss measure) we restrict the BMS measure to the $u^-$ coordinate. Integrating over the $u^+$ coordinate in $(-2,2)$ gives

 \begin{equation}
   \int^2_{-2} \frac{d\mu^{PS}(u^+)}{\abs{u^+-u^-}^{2\delta_{\wh{\Gamma}}}}.
 \end{equation}
 Thus, for a set $E \subset (-\infty,-2) \cup (\infty,2)$
 \begin{equation}
   \int_E\int^2_{-2} \frac{d\mu^{PS}(x)}{\abs{x-y}^{2\delta_{\wh{\Gamma}}}} d\mu^{PS}(y)
 \end{equation}
 is a measure. Changing coordinates and using that $d\mu^{PS}(1/y) = y^{-2\delta_{\wh{\Gamma}}}d\mu^{PS}(y)$ (this follows from \eqref{conformal invariance} and a calculation using the Busemann function) gives, for any set $E \subset (-2,2)^{\ast}$

 \begin{equation}
   m^0(E) := C_0\int_E\int^2_{-2} \frac{d\mu^{PS}(x)}{\abs{xy-1}^{2\delta_{\wh{\Gamma}}}} d\mu^{PS}(y),
 \end{equation}
 where $C_0$ is a normalizing constant. In the next section we show that this is indeed $T$-invariant and ergodic.

   \subsection{Invariance and Ergodicity}
   \label{ss: Invariance and Ergodicity}
   
   \begin{theorem} \label{thm:ergodicity}
     On $(-2,2)^{\ast}$, $m^0$ is $T$-invariant and ergodic.
   \end{theorem}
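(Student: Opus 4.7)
The plan is to deduce both invariance and ergodicity of $m^0$ from the corresponding properties of the Bowen--Margulis--Sullivan measure under the geodesic flow, exploiting the disintegration of Lemma \ref{end pt}.

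For $T$-invariance I would carry out a direct change of variables on each branch of the Gauss map. Decompose the domain as $(-2,2)^{\ast} = \bigsqcup_{k \in \Z_{\neq 0}} I_k$, where $I_k = \{y : 1/y - 4k \in (-2,2)^{\ast}\}$ is the cylinder on which the first partial quotient equals $4k$ and $T|_{I_k}(y) = 1/y - 4k$, so the inverse branch is $z \mapsto 1/(z+4k)$. The key observation is that $1/(z+4k) = -\gamma_k(z)$ where $\gamma_k = \begin{psmallmatrix} 0 & 1 \\ -1 & -4k \end{psmallmatrix} \in \wh{\Gamma}$; since the involution $y \mapsto -y$ normalizes $\wh{\Gamma}$, the uniqueness of the Patterson--Sullivan measure forces $\mu^{PS}$ to be symmetric under negation, and combined with the conformal rule \eqref{conformal invariance} this yields
\begin{equation*}
  d\mu^{PS}\!\left(\tfrac{1}{z+4k}\right) = (z+4k)^{-2\delta_{\wh{\Gamma}}}\, d\mu^{PS}(z).
\end{equation*}
Substituting $y = 1/(z+4k)$ into the outer integral of $m^0(T^{-1}E)$, the kernel $|xy-1|^{-2\delta_{\wh{\Gamma}}}$ becomes $(z+4k)^{2\delta_{\wh{\Gamma}}}|x - (z+4k)|^{-2\delta_{\wh{\Gamma}}}$, exactly cancelling the Jacobian. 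Translating $x \mapsto x - 4k$ (which preserves $\mu^{PS}$ because $y \mapsto y+4 \in \wh{\Gamma}_\infty$ has unit derivative) turns this into $|x'-z|^{-2\delta_{\wh{\Gamma}}}$ with $x' \in (-2-4k, 2-4k)$. Summing over $k \in \Z_{\neq 0}$ fills out $\R \setminus [-2,2]$, and a concluding inversion $x' \mapsto 1/u$ (applying the conformal rule for $y \mapsto -1/y \in \wh{\Gamma}$ together with the negation symmetry, and using that $\cL(\wh{\Gamma}) \cap (-2,2) \subset (-1/2, 1/2)$ so the $u$-integration over $(-1/2,1/2)^{\ast}$ agrees with one over $(-2,2)^{\ast}$) reassembles the expression into $m^0(E)$.

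For ergodicity I would invoke Sullivan--Babillot ergodicity of $\BMS$ under the geodesic flow $\Phi^t$ on $T^1(\wh{\Gamma}\backslash \half)$. Using the Bowen--Series type cross-section $\Sigma$ sketched in the Appendix, whose first-return map projects to $T$ on the backward-endpoint coordinate, Lemma \ref{end pt} gives
\begin{equation*}
  d\BMS(u) = \frac{d\mu^{PS}(u^-)\, d\mu^{PS}(u^+)\, dt}{|u^+ - u^-|^{2\delta_{\wh{\Gamma}}}},
\end{equation*}
which after bringing $u^-$ into the fundamental region $(-2,2)^{\ast}$ of the cusp and integrating out $u^+$ and $t$ along the section yields a constant multiple of $m^0$. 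Given a $T$-invariant $A \subset (-2,2)^{\ast}$ with $m^0(A) > 0$, its pullback $\tilde A \subset T^1(\wh{\Gamma}\backslash \half)$ is $\Phi^t$-invariant with positive BMS mass; Sullivan's theorem forces $\BMS(\tilde A^c) = 0$, whence $m^0(A^c) = 0$.

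The principal technical obstacle is the invariance calculation: it requires the simultaneous use of three distinct properties of $\mu^{PS}$ (quasi-invariance under $\wh{\Gamma}$, $4\Z$-translation invariance from the cusp, and the reflection symmetry $y \mapsto -y$), and the contributions from the countably many branches $I_k$ must cancel perfectly through the branch sum and the final conformal inversion. The ergodicity step is conceptually cleaner but relies on the Appendix identification of the quotient map $(T^1(\wh{\Gamma}\backslash\half), \BMS) \to ((-2,2)^{\ast}, m^0)$ with the factor map intertwining $\Phi^t$ with $T$.
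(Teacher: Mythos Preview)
Your invariance argument is essentially the paper's: change variables $y \mapsto 1/(z+4k)$ via the conformal rule for $\mu^{PS}$, use $4\Z$-translation invariance on the inner variable, sum the branches to fill $\R\setminus[-2,2]$, and invert back. If anything you are more careful than the paper about the sign issue---the paper simply asserts $d\mu^{PS}(1/y)=y^{-2\delta}\,d\mu^{PS}(y)$, which as written requires exactly the negation symmetry you isolate.

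For ergodicity, however, the paper takes a different and more elementary route. It does \emph{not} pass through $\BMS$-ergodicity and the Appendix cross-section. Instead it proves a R\'enyi-type bounded-distortion estimate directly: the density $\rho(y)=\int_{-2}^{2}|xy-1|^{-2\delta_{\wh\Gamma}}\,d\mu^{PS}(x)$ is bounded above and below on the limit set, and since each inverse branch $T^{-n}|_{\Delta_n}$ is the restriction of an element of $\wh\Gamma$, the $\Gamma$-invariance of $\nu_i$ gives
\[
C^{-1}m^0(A)\ \le\ m^0\bigl(T^{-n}A\mid\Delta_n\bigr)\ \le\ C\,m^0(A)
\]
uniformly over all cylinders $\Delta_n$. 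For $A$ invariant with $m^0(A)>0$ this forces $m^0(\Delta_n\mid A)\ge C^{-1}m^0(\Delta_n)$ for every cylinder, hence $m^0(A^c)=0$ by a density argument.

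Your route is sound---the paper even concedes in one sentence that ergodicity of $\BMS$ under the geodesic flow is the conceptual reason---but the paper deliberately avoids leaning on the cross-section construction, whose commuting diagrams it explicitly declines to verify. The trade-off: your argument is cleaner and transports verbatim to any Fuchsian group admitting a Bowen--Series coding, while the paper's argument is self-contained and spares one the technical check that the first-return map on the section really factors onto $T$.
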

   
   \begin{proof}
     To prove invariance, let $E \subset (-2,2)^{\ast}$ and consider the measure of its preimage

     \begin{equation} \notag
       m^0(T^{-1}(E)) = C_0 \int_{T^{-1}(E)} \int_{-2}^2 \frac{d\mu^{PS}(x)}{\abs{xy -1}^{2\delta_{\wh{\Gamma}}}} d\mu^{PS}(y)
     \end{equation}
     Plugging in the definition of $T^{-1}(E)$ and changing variables ($d\mu^{PS}(1/y) = y^{-2\delta_{\wh{\Gamma}}}d\mu^{PS}(y)$) together with the fact that the Patterson-Sullivan measure is invariant under translation by $4n$ gives

     \begin{align} \notag
       &= C_0  \sum_{n \in \Z^{\ast}} \int_{E+4n} \left( \int_{-2}^2 \frac{d\mu^{PS}(x)}{\abs{y -x}^{2\delta_{\wh{\Gamma}}}}\right) d\mu^{PS}(y)  \\
       &= C_0 \int_{E} \sum_{n \in \Z^{\ast}}  \int_{-2}^2 \left( \frac{d\mu^{PS}(x)}{\abs{y -x-4n}^{2\delta_{\wh{\Gamma}}}}\right) d\mu^{PS}(y). 
     \end{align}
     If we now change the $x$ variable to $x+4n$ this gives
     \begin{equation} \notag
       = C_0 \int_{E}  \int_{(-\infty,-2) \cup (2,\infty)}\frac{d\mu^{PS}(x)}{\abs{y -x}^{2\delta_{\wh{\Gamma}}}} d\mu^{PS}(y). 
     \end{equation}
     Hence applying the change of variables $x \mapsto x^{-1}$ gives
     \begin{equation} \notag
       = C_0 \int_{E}  \int_{-2}^2\frac{d\mu^{PS}(x)}{\abs{xy -1}^{2\delta_{\wh{\Gamma}}}} d\mu^{PS}(y) = m^0(E). 
     \end{equation}
     
     \vspace{2mm}

     This new measure is ergodic for the Gauss map because the BMS is ergodic for the geodesic flow. However to see this directly note first that the density 

     \[ \rho(y) = \int_{-2}^2 \frac{d\mu^{PS}(x)}{\abs{xy-1}^{2\delta_{\wh{\Gamma}}}}\]
     is bounded on $\cL(\wh{\Gamma})$. Given $a_1,\dots,a_n$ and writing $\frac{p_i}{q_i} = [0;a_1,\dots,a_i]$, define the cylinder sets

     \begin{equation}
       \Delta_n := \left\{ \psi_n(t) := \frac{p_n +p_{n-1}t}{q_n+q_{n-1}t} : 0 \le t \le 1 \right\}.
     \end{equation}
     Note that the sets $\Delta_n \cap \cL(\wh{\Gamma})$ generate the Borel $\sigma$-algebra on $\cL(\wh{\Gamma})$. 

     Now, for any $n >0$, for $s<t \in [0,1]$ we have that there exists a $\gamma \in \wh{\Gamma}$ such that

     \begin{align}
       \begin{aligned}
         \mu^{PS}\left(\left.T^{-n}( [\frac{s}{4},\frac{t}{4}))\right| \Delta_n\right) &\asymp \nu_i\left(\left. T^{-n}( [\frac{s}{4},\frac{t}{4}))  \right| \Delta_n \right)\\
         &= \frac{\nu_i(\gamma [\frac{s}{4},\frac{t}{4}))}{\nu_i(\gamma [0,\frac{1}{4}))}\\
         &=  \frac{\nu_i( [\frac{s}{4},\frac{t}{4}))}{\nu_i( [0,\frac{1}{4}))}\\
       \end{aligned}
     \end{align}
     Therefore, as the above mentioned density is bounded above and below, for any $A \subset \cL(\wh{\Gamma}) \cap (-2,2)^{\ast}$ measurable 
     \begin{equation} 
       \frac{1}{C}m^0(A) \le m^0(\left.T^{-n}(A) \right| \Delta_n) \le C m^0(A).
     \end{equation}

     To conclude, assume $A$ is $T$-invariant, then $\frac{1}{C}m^0(A) \le m^0(\left.A\right| \Delta_n)$. If $m^0(A)>0$, then $\frac{1}{C}m^0(\Delta_n) \le m^0(\Delta_n | A)$. Therefore, since the cylinders $\Delta_n$ generate the Borel $\sigma$-algebra of measurable sets, we have that

     \[ \frac{1}{C}m^0(B) \le m^0(B|A)\]
     for all $B$ measurable. Setting $B = A^c$ implies that $m^0(A^c) =0$ and $m^0(A)=1$. Hence $m^0$ is ergodic.

   \end{proof}

\subsection{Gauss-Kuzmin Statistics}

Given a point $x = [0;a_1,a_2,\dots] \in \R$ ($a_i \in \N$), Gauss considered the following problem (further studied by Kuzmin in 1928): let $\wt{P}_{n,k}(x) = \frac{\#(k,n)}{n}$ where $\#(k,n)$ is the number of $a_i = k$ with $i \le n$. Does there exist a limting distribution for $\wt{P}_{n,k}(x)$? Using the ergodicity of the Gauss measure it is fairly simple to show that for Lebesgue-almost every $x$

\begin{equation}
  \lim_{n \to \infty} \wt{P}_{n,k}(x) =  \frac{1}{\ln(2)}\ln \left( 1+\frac{1}{k(k+2)}\right).
\end{equation}
This distribution is now known as Gauss-Kuzmin statistics. For a detailed description of the original problem and history see {\cite[Section 15]{Khinchin2003}}. The problem has an analogue in our setting.

\vspace{2mm}

For $[0;a_1,a_2,...] = x \in \overline{\cQ_4} \cap (-2,2)$ define $\wh{P}_{n,k}(x) = \frac{\#(k,n)}{n}$ where $\#(k,n)$ is the number of $a_i$ equal $k$ for $i \le n$. For simplicity of notation we assume $k>0$. In that case, writing

   \begin{equation}
     \wh{P}_{n,k}(x) = \frac{1}{n}\sum_{s=0}^{n-1} \chi_{(\frac{1}{k+4},\frac{1}{k}]}(T^s x)
   \end{equation}
   and applying the Birkhoff ergodic theorem for $m^0$ imply:

   %%%%%%%%%%%%%%%%%%%%%%%%%%%%%%%%%%%%%%%%%%%%%%%%%%%%%%%%%%%%%%%%%%%%%%%%%%%%%
   %------------------------THEOREM: GK-----------------------------------------
   %%%%%%%%%%%%%%%%%%%%%%%%%%%%%%%%%%%%%%%%%%%%%%%%%%%%%%%%%%%%%%%%%%%%%%%%%%%%%

   \begin{theorem}\label{GK}

     For every positive integer $k$ and $\mu^{PS}$-almost every $x = [0;a_1,\dots] \in \overline{\cQ_4}\cap (-2,2)$
       \begin{equation}
         \wh{P}_k(x) = \lim_{n \to \infty} \wh{P}_{n,k}(x) = m^0\left((\frac{1}{k+4},\frac{1}{k}]\right).
       \end{equation}

   \end{theorem}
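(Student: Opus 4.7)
The plan is to apply the Birkhoff ergodic theorem to the dynamical system $(\overline{\cQ_4}\cap(-2,2)^{\ast},\, T,\, m^0)$, which is ergodic by Theorem \ref{thm:ergodicity}, with observable equal to the indicator of the one-cylinder $\{a_1 = k\}$.

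First I would verify the representation
\begin{equation}\notag
\wh{P}_{n,k}(x) \;=\; \frac{1}{n}\sum_{s=0}^{n-1}\chi_{I_k}(T^s x),
\end{equation}
where $I_k \subset (-2,2)^{\ast}$ is the cylinder set corresponding to the first partial quotient being equal to $k$. Using the explicit formula $T([0;a_1,a_2,\dots]) = [0;a_2,\dots]$ together with the fact that under the convention $a_i \in 4\Z^{\ast}$ the value of the first partial quotient can be read off directly from the digit $x$ lies in, this cylinder is exactly $I_k = (\tfrac{1}{k+4},\tfrac{1}{k}]$ (for $k>0$; the case $k<0$ is symmetric, whence the reduction to $k>0$ in the statement). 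This is a short bookkeeping check using the definition of $T^{-1}$ in \eqref{Gauss inv}.

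Next, since $\chi_{I_k}\in L^1(m^0)$ (the interval $I_k$ is bounded and $m^0$ is a finite measure on $(-2,2)^{\ast}$), Theorem \ref{thm:ergodicity} combined with Birkhoff's ergodic theorem gives, for $m^0$-almost every $x$,
\begin{equation}\notag
\lim_{n\to\infty}\wh{P}_{n,k}(x) \;=\; \frac{1}{m^0((-2,2)^{\ast})}\int \chi_{I_k}\,dm^0 \;=\; m^0\!\left(\left(\tfrac{1}{k+4},\tfrac{1}{k}\right]\right),
\end{equation}
after absorbing the total mass into the normalizing constant $C_0$.

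The final step, which I expect to be the only non-routine point, is the upgrade from $m^0$-almost everywhere to $\mu^{PS}$-almost everywhere convergence. For this I would use that the two measures are mutually absolutely continuous on $\cL(\wh{\Gamma})\cap(-2,2)^{\ast}$: by definition
\begin{equation}\notag
\frac{dm^0}{d\mu^{PS}}(y) \;=\; C_0\int_{-2}^{2}\frac{d\mu^{PS}(x)}{|xy-1|^{2\delta_{\wh{\Gamma}}}} \;=\; C_0\,\rho(y),
\end{equation}
and, as noted in the proof of Theorem \ref{thm:ergodicity}, $\rho$ is bounded above and below on $\cL(\wh{\Gamma})\cap(-2,2)^{\ast}$ (boundedness follows since $|xy-1|$ stays away from $0$ uniformly when $|x|,|y|<2$, together with finiteness of $|\mu^{PS}|$ on compacta; the positive lower bound is similar). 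Hence $m^0$-null sets and $\mu^{PS}$-null sets coincide on $\cL(\wh{\Gamma})\cap(-2,2)^{\ast}$, and the $m^0$-a.e. limit above is therefore a $\mu^{PS}$-a.e. limit, completing the proof. The main subtlety throughout is really just to ensure the endpoints $0$ and $\pm 2$ do not cause trouble; this is handled by working on $(-2,2)^{\ast}$ and exploiting that $\mu^{PS}$ assigns no mass to these exceptional points since they are not in the relevant part of the limit set.
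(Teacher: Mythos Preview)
Your proposal is correct and follows exactly the paper's approach: rewrite $\wh{P}_{n,k}$ as a Birkhoff average of $\chi_{(\frac{1}{k+4},\frac{1}{k}]}$ and invoke Birkhoff's ergodic theorem together with Theorem~\ref{thm:ergodicity}. In fact you go slightly further than the paper, which simply states ``applying the Birkhoff ergodic theorem for $m^0$'' and then asserts the conclusion $\mu^{PS}$-almost everywhere without comment; your observation that $dm^0/d\mu^{PS}=C_0\rho$ with $\rho>0$ on $\cL(\wh{\Gamma})$ is exactly what is needed to pass from $m^0$-a.e.\ to $\mu^{PS}$-a.e. One small quibble: your parenthetical justification that ``$|xy-1|$ stays away from $0$ uniformly when $|x|,|y|<2$'' is false as stated for arbitrary reals (take $x=y=1$); what makes it true here is that $x,y$ range over $\cL(\wh{\Gamma})\cap(-2,2)$, which is contained in a neighbourhood of the origin well inside $(-1,1)$ (any $[0;a_1,\dots]$ with $|a_1|\ge 4$ has modulus at most $1/3$, say), so $|xy|$ is bounded strictly below $1$. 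With that correction the argument is complete.
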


  \section*{Appendix - Cutting Sequences for $\wh{\Gamma}$}

Working with $\wh{\Gamma}$ the goal of this section is to show that, given a geodesic with right end point in $(-2,2) \cap \cL(\wh{\Gamma})$ (and left end point in $(-\infty,-2)$) there is a correspondence between the way this geodesic cuts the boundaries of fundamental domains and the continued fraction expansion of the end point. This section is exactly analogous to the Bowen-Series coding for geodesics in $\PSL(2,\R)/\PSL(2,\Z)$.

Let $\xi \in (-2,2) \cap \cL(\wh\Gamma)$ and let $\gamma $ be any geodesic whose right endpoint is $\xi $ and which intersects the line $x=-2$. As this geodesic moves from left to right, it will cut each fundamental domain. Each fundamental domain has two funnels and a cusp. Thus the geodesic will separate one of the three from the others. If the geodesic separates a cusp we write a $c$. If it separates a funnel we write an $l$ or an $r$ depending on whether the funnel is to the left or right of the geodesic. See Figure \ref{fig:coding}.

%%%%%%%%%%%%%%%%%%%%%%%%%%%%%%%%%%%%%%%%%%%%%%%%%%%%%%%%%%%%%%%%%%%%%%%%%%%%%%%%
%------------------------FIGURE: Example----------------------------------------
%%%%%%%%%%%%%%%%%%%%%%%%%%%%%%%%%%%%%%%%%%%%%%%%%%%%%%%%%%%%%%%%%%%%%%%%%%%%%%%%

\begin{figure}[ht!]
  \begin{center}    
    \includegraphics[width=0.9\textwidth]{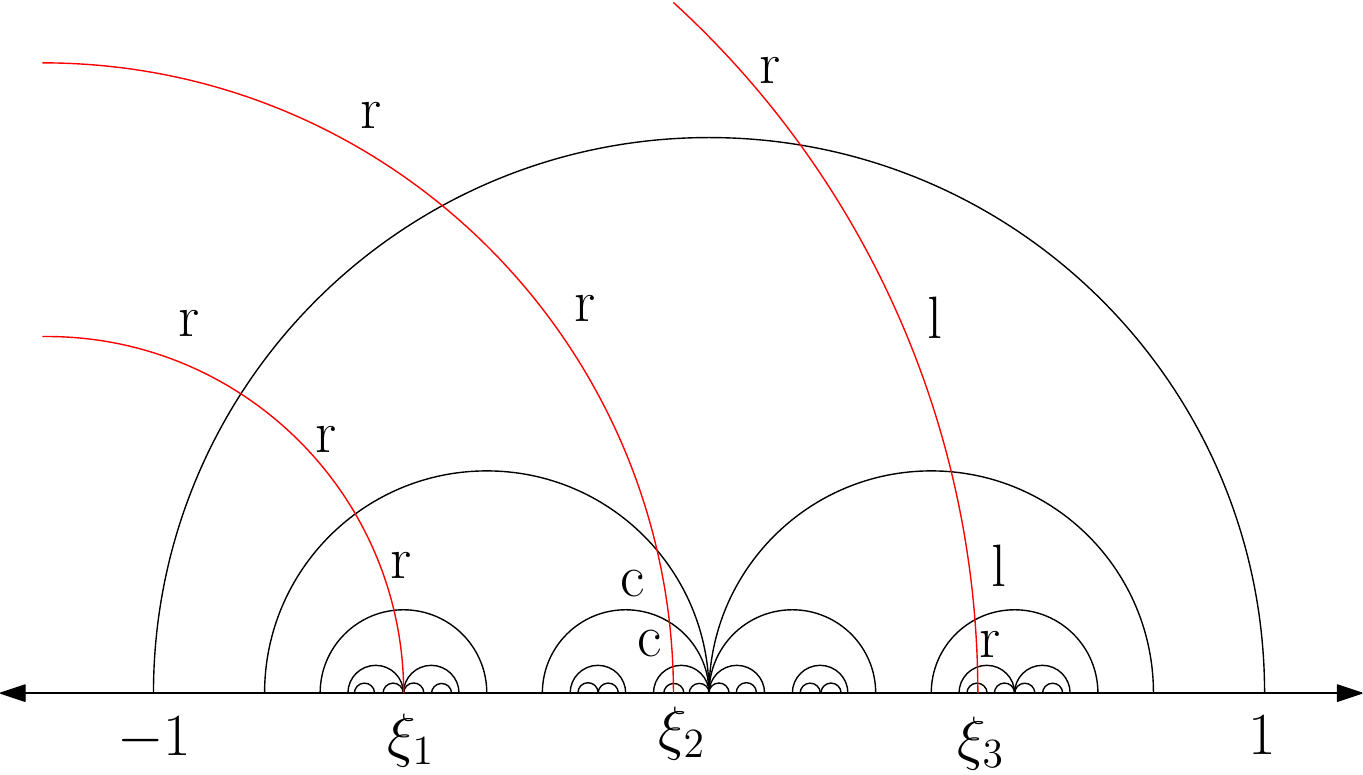}
  \end{center}
  \caption{%
     {\tt In this diagram we show the cutting sequence for 3 different points $\xi_1, \xi_2,\xi_3$. For $\xi_2$, first a funnel is cut off to the \emph{right} of the geodesic, then again a funnel is cut off to the \emph{right}. Then a \emph{cusp} is cut off and then another \emph{cusp}. Thus the first 4 terms in the cutting sequence are $r,r,c,c$.
       }
   }%
  
  \label{fig:coding}

\end{figure}

It is easy to see that the first term in the sequence will always be $r$ and the next term will be $l/r$ after that there will be a sequence of $c$'s followed by the same $l/r$. Thus we end up with a sequence of the form

\begin{equation}
  \xi \mapsto r, q_0, c^{\alpha_0}, q_0, q_1, c^{\alpha_1}, q_1, q_2, c^{\alpha_2},q_2 \dots
\end{equation}
(the sequence is finite if the geodesic ends in a cusp)  where $q_i = l,r$ and $\alpha_i \ge 0$. With that it is fairly easy to see that

\begin{equation}
  \xi = [0; (-1)^{\eta_0}4(\alpha_0+1) , (-1)^{\eta_1}4(\alpha_1+1) , \dots]
\end{equation}
where

\begin{equation}
  \eta_i = \begin{cases}
    0 & \mbox{ if } q_i =l\\
    1 & \mbox{ if } q_i =r
  \end{cases}.
\end{equation}
With that, there is a correspondence between such sequences and geodesics with end points in $(-2,2)$.

In order to identify the appropriate cross-section of $T^1(\Gamma\backslash\half)$ consider the fundamental domain above $i$ and the line connecting $i$ to $\infty$, call it $S$. Given a geodesic $\gamma$ whose left end point is in $(-\infty,-2)\cap \cL(\wh{\Gamma}))$ and whose right endpoint is in $(-2,2)\cap \cL(\wh{\Gamma}))$ consider a point $x \in \gamma \cap S$. We insert $x$ into the cutting sequence of $\gamma$, at its position in the sequence of fundamental domains, resulting in a sequence of the form:

\begin{equation}
  r,q_0 , c^{\alpha_0}, q_0, q_1, c,c,c, x, c, q_1,...
\end{equation}
We say a cutting sequence \emph{changes type} at $x$ if $x$ lies between a $q_i$ and $q_{i+1}$.

%%%%%%%%%%%%%%%%%%%%%%%%%%%%%%%%%%%%%%%%%%%%%%%%%%%%%%%%%%%%%%%%%%%%%%%%%%%%%%%%
%------------------------FIGURE: Example----------------------------------------
%%%%%%%%%%%%%%%%%%%%%%%%%%%%%%%%%%%%%%%%%%%%%%%%%%%%%%%%%%%%%%%%%%%%%%%%%%%%%%%%

\begin{figure}[ht!]
  \begin{center}    
    \includegraphics[width=0.9\textwidth]{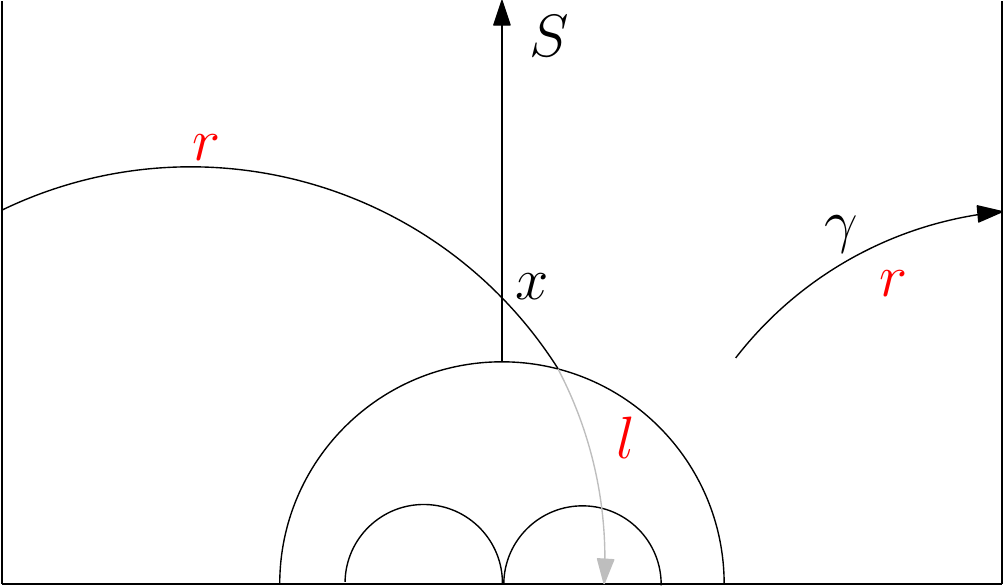}
  \end{center}
  \caption{%
     {\tt In this diagram we show a geodesic in the fundamental domain above $i$, and a point $x \in S \cap \gamma$ such that the cutting sequence for $\gamma$ changes type at $x$. This is because the cutting sequence (pictured in red) with $x$ inserted will read $...,r,r, x , l , ...$.
       }
   }%
  
  \label{fig:cross-section}

\end{figure}

With that, the cross-section $\cC \subset T^1(\Gamma \backslash \half)$ are those points, based at $x \in S$ pointed along geodesics whose cutting sequence changes type at $x$. In that case, the return map to this cross-section corresponds to the Gauss map acting on the end point. For a more formal discussion for the modular group (however the same details apply here) see \cite{Series1985}.

  \section*{Acknowledgements}

  The author was supported by EPSRC Studentship EP/N509619/1 1793795. The author is very grateful to Jens Marklof for his guidance throughout this project. Moreover we thank Florin Boca, Zeev Rudnick, and Xin Zhang for their insightful comments on early preprints.

  \small 
  \bibliographystyle{alpha}
  \bibliography{biblio}

\begin{thebibliography}{BGSV18}

\bibitem[AC14]{AthreyaCheung2014}
J.~S. Athreya and Y.~Cheung.
\newblock A {P}oincar{\'e} section for the horocycle flow on the space of
  lattices.
\newblock {\em International Mathematics Research Notices},
  2014(10):2643--2690, 1 2014.

\bibitem[AG18]{AthreyaGhosh2015}
J.~S. Athreya and A.~Ghosh.
\newblock The {E}rd{\"o}s-{S}z{\"u}sz-{T}ur{\'a}n distribution for equivariant
  processes.
\newblock {\em L{$^{\prime}$}Enseignement Math{\'{e}}matique}, 60(2):1--21,
  2018.

\bibitem[BCZ01]{BocaCobeliZaharescu2001}
F.~Boca, C.~Cobeli, and A.~Zaharescu.
\newblock A conjecture of {R}.{R}. {H}all on {F}arey points.
\newblock {\em Journal f{\"u}r die Reine und Angewandte Mathematik},
  535:207--236, 2001.

\bibitem[BGSV18]{BeresnevichETAL2018}
V.~Beresnevich, A.~Ghosh, D.~Simmons, and S.~Velani.
\newblock Diophantine approximation in {K}leinian groups: singular, extremal,
  and bad limit points.
\newblock {\em Journal of the London Mathematical Society}, 98(2):306--328,
  2018.

\bibitem[BKS10]{BourgainKontorovichSarnak2010}
J.~Bourgain, A.~Kontorovich, and P.~Sarnak.
\newblock Sector estimates for hyperbolic isometries.
\newblock {\em Geometric and Functional Analysis}, 20(5):1175--1200, Nov 2010.

\bibitem[Boc08]{Boca2008}
F.~Boca.
\newblock A problem of {E}rd{\"{o}}s, {S}z{\"{u}}sz and {T}ur{\'{a}}n
  concerning diophantine approximations.
\newblock {\em International Journal of Number Theory}, 04(04):691--708, 2008.

\bibitem[Hal70]{Hall1970}
R.~R. Hall.
\newblock A note on {F}arey series.
\newblock {\em Journal of the {L}ondon Mathematical Society}, s2-2(1):139--148,
  01 1970.

\bibitem[Hee17]{Heersink2017}
B.~Heersink.
\newblock Equidistribution of {F}arey sequences on horospheres in covers of
  {$\operatorname{SL}(n+1, \mathbb{Z})\backslash
  \operatorname{SL}(n+1,\mathbb{R})$} and applications.
\newblock {\em arXiv:1712.03258}, 2017.

\bibitem[Khi35]{Khinchin2003}
A.~Ya. Khinchin.
\newblock {\em Continued Fractions}.
\newblock University of Chicago Press, 1935.

\bibitem[LM17]{LagariasMehta2017}
J.~Lagarias and H.~Mehta.
\newblock Products of {F}arey fractions.
\newblock {\em Experimental Mathematics}, 26(1):1--21, 2017.

\bibitem[Lut18]{Lutsko2018}
C.~Lutsko.
\newblock Directions in orbits of geometrically finite hyperbolic subgroups.
\newblock {\em arXiv:1811.11054}, 2018.

\bibitem[Mar99]{Marklof2000}
J.~Marklof.
\newblock The {N}-point correlations between values of a linear form.
\newblock {\em Ergodic Theory and Dynamical Systems}, 20, 01 1999.

\bibitem[Mar10]{Marklof2010}
J.~Marklof.
\newblock The asymptotic distribution of {F}robenius numbers.
\newblock {\em Inventiones mathematicae}, 181(1):179--207, Jul 2010.

\bibitem[Mar13]{Marklof2013}
J.~Marklof.
\newblock Fine-scale statistics for the multidimensional {F}arey sequence.
\newblock In {\em Limit Theorems in Probability, Statistics and Number Theory},
  pages 49--57, Berlin, Heidelberg, 2013. Springer Berlin Heidelberg.

\bibitem[MO15]{MohammadiOh2015}
A.~Mohammadi and H.~Oh.
\newblock Matrix coefficients, counting and primes for orbits of geometrically
  finite groups.
\newblock {\em Journal of the EMS}, 17:837--897, 2015.

\bibitem[MS10]{MarklofStrom2010}
J.~Marklof and A.~Str{\"o}mbergsson.
\newblock The distribution of free path lengths in the periodic {L}orentz gas
  and related lattice point problems.
\newblock {\em Annals of Mathematics}, 172:1949--2033, 2010.

\bibitem[OS13]{OhShah2013}
H.~Oh and N.~A. Shah.
\newblock Equidistribution and counting for orbits of geometrically finite
  hyperbolic groups.
\newblock {\em J. Amer. Math. Soc.}, 26(2):511--562, 2013.

\bibitem[Pat76]{Patterson1976}
S.~J. Patterson.
\newblock The limit set of a {F}uchsian group.
\newblock {\em Acta Math.}, 136:241--273, 1976.

\bibitem[RZ17]{RudnickZhang2017}
Z.~Rudnick and X.~Zhang.
\newblock Gap distributions in circle packings.
\newblock {\em M{\"u}nster Journal of Mathematics}, 10:131--170, 2017.

\bibitem[Ser85]{Series1985}
C.~Series.
\newblock The modular surface and continued fractions.
\newblock {\em Journal of the London Mathematical Society}, s2-31(1):69--80,
  1985.

\bibitem[Sul79]{Sullivan1979}
D.~Sullivan.
\newblock The density at infinity of a discrete group of hyperbolic motions.
\newblock {\em Publications Mathématiques de l'IHÉS}, 50:171--202, 1979.

\bibitem[SV95]{StratmannVelani1995}
B.~Stratmann and S.~Velani.
\newblock The {P}atterson measure for geometrically finite groups with
  parabolic elements, new and old.
\newblock {\em Proceedings of the London Mathematical Society},
  s3-71(1):197--220, 1995.

\bibitem[Tah19]{Taha2019}
D.~Taha.
\newblock The {B}oca-{C}obeli-{Z}aharescu map analogue for the {H}ecke triangle
  groups {$G_q$}.
\newblock {\em arXiv:1810.10668}, 2019.

\bibitem[XZ06]{XiongZaharescu2006}
M.~Xiong and A.~Zaharescu.
\newblock A problem of {E}rd{\"{o}}s-{S}z{\"{u}}sz-{T}ur{\'{a}}n on diophantine
  approximation.
\newblock {\em Acta Arithmetica}, 125(2):163--177, 2006.

\bibitem[Zha17]{Zhang2017}
X.~Zhang.
\newblock The gap distribution of directions in some {S}chottky groups.
\newblock {\em Journal of Modern Dynamics}, 11:477, 2017.

\end{thebibliography}

  \vskip2cm

  \hbox{
    \hskip9cm
    \vbox{\hsize=7cm\noindent
      {\sc Authors' address:}
      \\
      School of Mathematics
      \\
      University of Bristol
      \\
      Bristol, BS8 1TW
      \\
      United Kingdom
      \\
      {\tt chris.lutsko@bristol.ac.uk}
      
    }
  }

\end{document}